\numberwithin{equation}{section}
\newtheorem{thm}{Theorem}[section]
\newtheorem{lem}[thm]{Lemma}
\newtheorem{nota}[thm]{Notation}
\newtheorem{fact}[thm]{Fact}
\newtheorem{cor}[thm]{Corollary}
\newtheorem{ex}[thm]{Example}
\newtheorem{prob}[thm]{Problem}
\newtheorem{rem}[thm]{Remark}
\begin{document}

\title{Local invariants of non-commutative tori}

\author{Fedor Sukochev}
\address{University of New South Wales, Kensington, NSW, 2052, Australia}
\email{f.sukochev@unsw.edu.au}
\author{Dmitriy Zanin}
\address{University of New South Wales, Kensington, NSW, 2052, Australia}
\email{d.zanin@unsw.edu.au}

\begin{abstract} We consider a generic curved non-commutative torus extending the notion of conformally deformed non-commutative torus from \cite{Connes-Tretkoff}. In general, a curved non-commutative torus is no longer represented by a spectral triple, not even by a twisted spectral triple. Therefore, the geometry of this manifold is governed by a positive second order differential operator (Laplace-Betrami operator) rather than a first order differential operator (Dirac operator). For this manifold, we prove an asymptotic expansion of the heat semi-group generated by Laplace-Beltrami operator and provide an algorithm to compute the local invariants which appear as coefficients in the expansion. This allows to extend the results of \cite{Connes-Tretkoff}, \cite{Connes-Moscovici}, \cite{FaKh} (beyond conformal case and/or for multi-dimensional tori). 
\end{abstract}

\subjclass[2010]{46L87, 58B34}

\maketitle

\section{Introduction}

We begin by reviewing the classical (commutative) roots of our work, and then move to the non-commutative generalisation prompted by \cite{Connes-Tretkoff}. Finally, we explain our results for the non-commutative torus.

\subsection{Minakshisundaram-Plejel theorem and local invariants in the classical geometry}

For a $d-$dimensional Riemannian manifold $(X,g),$ there exists a natural first order differential operator $D_g$ on the space of forms called Hodge-de Rham operator. Its square $D_g^2$ is the Hodge-Laplace operator (denoted further by $\Delta_g$) and its component acting on $0$ order forms being the Laplace-Beltrami operator (also denoted by $\Delta_g$) \cite{Rosenberg}. The heat semi-group is now defined by the formula
$$t\to e^{-t\Delta_g},\quad t>0.$$
If $X$ is compact, then the resolvent of the Laplace-Beltrami operator $\Delta_g$ is compact. Hence, $e^{-t\Delta_g}$ is compact for $t>0.$ In fact, it happens that $e^{-t\Delta_g}$ belongs to the trace class for $t>0.$

In his seminal work \cite{Weyl}, Weyl proved that, for a compact manifold,
\begin{equation}\label{weyl formula}
\lim_{t\downarrow0}(4\pi t)^{\frac{d}{2}}{\rm Tr}(e^{-t\Delta_g})={\rm Vol}(X),\quad t\downarrow0.
\end{equation}
Following Weyl's work, it became an established custom to measure various geometric (and often topological) quantities associated with a Riemannian manifold $X$ in terms of its heat semi-group expansion $t\to e^{-t\Delta_g},\quad t>0.$ The mere existence of such expansion is a famous theorem of Minakshisundaram and Plejel (among all approaches to that theorem, a particularly detailed account is given in \cite{Rosenberg}; even though Theorem 3.24 there concerns only a special case $f=1,$ the proof of the formula stated below in the general case is very similar). 

Thus, for every $f\in C^{\infty}(X),$ the Minakshisundaram-Plejel theorem asserts an existence of an asymptotic expansion 
\begin{equation}\label{heat asymptotic}
{\rm Tr}(M_fe^{-t\Delta_g})\approx (4\pi t)^{-\frac{d}{2}}\cdot \sum_{\substack{k\geq0\\ k=0{\rm mod}2}}a_k(f)t^{\frac{k}{2}},\quad t\downarrow0.
\end{equation}
Here, $d$ is the dimension of $X$ and $M_f:L_2(X)\to L_2(X)$ is the operator of pointwise multiplication by $f.$ Moreover, there exist functions $A_k\in C^{\infty}(X)$ such that
\begin{equation}\label{normality of coefficients}
a_k(f)=\int_X A_k\cdot f d{\rm vol}_g,\quad k\geq0,\quad k=0{\rm mod}2,
\end{equation}
where ${\rm vol}_g$ is the standard volume element on $X$ given in local coordinates by the formula
$$d{\rm vol}_g=({\rm det}(g))^{\frac12}(x)dx.$$

Here, the summation goes over even $k$ only because the manifold is assumed not to have a boundary. For manifolds with boundary, one should also include the terms with odd $k.$

An easy computation shows that $A_0=1,$ which is consistent with \eqref{weyl formula}. Further computations (see e.g. Proposition 3.29 in \cite{Rosenberg}) show that
$$A_2=\frac16 R,$$
where $R$ is the scalar curvature of $(X,g).$ In particular, $a_2(1)$ is the Einstein-Hilbert action (see e.g. \cite{Connes-NCG}). Further, the elements $a_k,$ $k>2$ are related to local invariants of higher order \cite{Rosenberg}.

Note that $a_0$ extends to a normal state $h$ on $L_{\infty}(X)$ by the obvious formula
$$h(f)=\int_X fd{\rm vol}_g,\quad f\in L_{\infty}(X).$$
Equation \eqref{normality of coefficients} can be re-written as
$$a_k(f)=h(A_k\cdot f),\quad f\in C^{\infty}(X).$$

This paper aims to find suitable extensions of the Minakshisundaram-Plejel theorem (and, consequently, of the Weyl theorem --- see formula \eqref{weyl formula}) for non-commutative tori with generic, non-flat, metric tensor. In the spectral geometry of Riemannian manifolds, the local invariants (such as Riemannian curvature) can be {\it detected} in the asymptotic expansion of the heat semigroup with respect to the Laplace-Beltrami operator. The paradigm of Non-commutative Geometry is to {\it define} local invariants via the asymptotic expansion of a heat semi-group associated to the Laplace-Beltrami operator.

\subsection{Local invariants in the non-commutative geometry} 

This grand program began in \cite{Connes-Tretkoff} (published only in 2011, but the main concepts and techniques were developed yet in the 1990's), where special Riemannian metric (conformal deformations of a flat one) on $2-$dimensional non-commutative manifolds was considered. The authors of \cite{Connes-Tretkoff} proved that Euler characteristic of such manifold is $0$ by means of Gauss-Bonnet theorem (recall that the classical Gauss-Bonnet theorem asserts that Euler characteristic of the $2-$dimensional Riemannian manifold equals to the average of its scalar curvature). Subsequently, the scalar curvature (for the conformal deformation of the $2-$dimensional non-commutative torus) was explicitly computed in \cite{Connes-Moscovici} and \cite{FaKh1} and, later, the term $a_4$ (the first place where the Riemann curvature tensor manifests itself beyond the scalar curvature) was further computed in \cite{CoFa} (intermediate computations include about a million terms!). 

We now briefly restate the whole program as it can be surmised from \cite{Connes-Tretkoff}. Relevant definitions concerning non-commutative torus $\mathbb{T}^d_{\theta}$ are given in Subsection \ref{ncg subsection} below.
\begin{prob}\label{the problem} Let $g$ be a Riemannian metric on the non-commutative torus and let $\Delta_g$ be the Laplace-Beltrami operator.
\begin{enumerate}[{\rm (a)}]
\item\label{tpa} prove, for every $x\in C^{\infty}(\mathbb{T}^d_{\theta}),$ the existence of the asymptotic 
$${\rm Tr}(\lambda_l(x)e^{-t\Delta_g})\sim t^{-\frac{d}{2}}\sum_{\substack{k\geq0\\ k=0{\rm mod}2}}t^{\frac{k}{2}}a_k(x)\quad t\downarrow0.$$
Here, ${\rm Tr}$ denotes the classical trace on the ideal $\mathcal{L}_1(L_{2}(\mathbb{T}^d_{\theta}))$ and 
\item\label{tpb} provide explicit formulae for the functionals $x\to a_k(x),$ $k\geq0.$
\end{enumerate}
\end{prob}

\subsection{Non-commutative Riemannian geometry}\label{ncg subsection} Let $d\geq2$ and let $\theta\in M_d(\mathbb{R})$ be anti-symmetric. Let $L_{\infty}(\mathbb{T}^d_{\theta})$ be the (von Neumann algebra of a) non-commutative torus defined with the help of the matrix $\theta.$ It is represented on the Hilbert space $L_2(\mathbb{T}^d_{\theta})$ via left regular representation $\lambda_l.$ This algebra can be viewed as the weak closure of the algebra $C^{\infty}(\mathbb{T}^d_{\theta})$ (as introduced in \cite{Connes-NCG}). It is equipped with a faithful tracial state $\tau,$ which happens to be normal. All these notions and notations are fully explained in Section \ref{prelim section}.

Ha and Ponge \cite{PongeHa} presented a general notion of Riemannian metric $g$ on the non-commutative torus which includes the conformally deformed metric considered in \cite{Connes-Tretkoff} as a special case. Namely, Riemannian metric $g$ on the non-commutative torus is simply a positive element in ${\rm GL}_d(C^{\infty}(\mathbb{T}^d_{\theta}))$ (the group of invertible $d\times d$ matrices with coefficients in $C^{\infty}(\mathbb{T}^d_{\theta})$) such that the elements $g_{ij}$ and $(g^{-1})_{ij}$ are self-adjoint for all $1\leq i,j\leq d.$

A von Neumann algebra corresponding to a curved non-commutative torus is the same as for the flat non-commutative torus. It is still represented on the same Hilbert space $L_2(\mathbb{T}^d_{\theta})$ via left regular representation. The only difference between flat and non-flat Hilbert spaces is the inner product on $L_2(\mathbb{T}^d_{\theta})$ given now by the formula
$$\langle u,v\rangle_{\nu}=\tau(u^{\ast}\nu v),\quad u,v\in L_2(\mathbb{T}^d_{\theta}).$$
Here, $\nu\in C^{\infty}(\mathbb{T}^d_{\theta})$ given in formula \eqref{nu def} below should be thought of as a "square root of the determinant" of the metric tensor $g\in {\rm GL}_d(C^{\infty}(\mathbb{T}^d_{\theta})).$ 

On the Hilbert space $L_2(\mathbb{T}^d_{\theta})$ (equipped with the inner product $\langle\cdot,\cdot\rangle_{\nu}$) we define a Laplace-Beltrami operator $\Delta_g$ by setting \cite{PongeHa}
$$\Delta_g=\lambda_l(\nu^{-1})\sum_{i,j=1}^dD_i\lambda_l(\nu^{\frac12}(g^{-1})_{ij}\nu^{\frac12})D_j.$$
Here,  $\{D_i\}_{i=1}^d$ are "partial derivations" on $C^{\infty}(\mathbb{T}^d_{\theta}).$

We view this operator as a starting point for Riemannian geometry on the non-commutative torus since it dualises the notion of Riemannian metric in the same spirit as in the commutative case. 

It should be noted that the element $e^{-t\Delta_g}$ belongs (see e.g. \cite{MPSZ}) to the trace ideal $\mathcal{L}_1(L_{2}(\mathbb{T}^d_{\theta}))$ that is to the class of all bounded  operators on $L_2(\mathbb{T}^d_{\theta})$ whose singular value sequence is summable.

\subsection{Main result} Our main result stated below provides a resolution to the Problem \ref{the problem} \eqref{tpa},\eqref{tpb} above in the most general situation. 

\begin{thm}\label{main result} Let $d\geq 2$ and $0\leq g\in {\rm GL}_d(C^{\infty}(\mathbb{T}^d_{\theta}))$ be such that the elements $g_{ij}$ and $(g^{-1})_{ij}$ are self-adjoint for all $1\leq i,j\leq d.$ For every $x\in L_{\infty}(\mathbb{T}^d_{\theta}),$ there exists an asymptotic expansion
\begin{equation}\label{asymptotic expansion}
{\rm Tr}(\lambda_l(x)e^{-t\Delta_g})\sim t^{-\frac{d}{2}}\sum_{\substack{k\geq0\\ k=0{\rm mod}2}}t^{\frac{k}{2}}\tau(x\cdot \nu^{-\frac12}I_k\nu^{\frac12}),\quad t\downarrow0.
\end{equation}
Here, $I_k$ is given in Notation \ref{ik notation} and the algorithm to compute it is presented in Section \ref{defnot section}.
\end{thm}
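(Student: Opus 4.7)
The overall strategy is to pass from the heat semigroup to the resolvent via the Cauchy formula
$$e^{-t\Delta_g}=\frac{1}{2\pi i}\oint_\Gamma e^{-t\lambda}(\Delta_g-\lambda)^{-1}\,d\lambda$$
along a contour $\Gamma$ encircling the non-negative spectrum of $\Delta_g$, and to read off the heat expansion from an asymptotic expansion of the resolvent. As a preparatory step I would replace $\Delta_g$, which is self-adjoint only with respect to $\langle\cdot,\cdot\rangle_\nu$, by $\widetilde\Delta_g:=\lambda_l(\nu^{1/2})\Delta_g\lambda_l(\nu^{-1/2})$, which is self-adjoint on $L_2(\mathbb{T}^d_\theta)$ with the standard inner product $\tau(\cdot^*\cdot)$. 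Because $\lambda_l(\nu^{1/2})$ is a unitary from $(L_2(\mathbb{T}^d_\theta),\langle\cdot,\cdot\rangle_\nu)$ onto the standard Hilbert space, the trace in the statement equals ${\rm Tr}_0(\lambda_l(\nu^{1/2}x\nu^{-1/2})e^{-t\widetilde\Delta_g})$. Hence an expansion of the latter with coefficients of the shape $\tau(\nu^{1/2}x\nu^{-1/2}\cdot I_k)$ turns into $\tau(x\cdot\nu^{-1/2}I_k\nu^{1/2})$ by cyclicity of $\tau$, exactly as required.

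The main technical engine is the non-commutative pseudo-differential calculus developed for curved non-commutative tori by Ha--Ponge, extending earlier work of Connes. The operator $\widetilde\Delta_g$ is a second-order elliptic differential operator on $C^\infty(\mathbb{T}^d_\theta)$ with symbol $p_2(\xi)+p_1(\xi)+p_0$ of positive, uniformly elliptic principal part. I would construct a parametrix for $(\widetilde\Delta_g-\lambda)^{-1}$ as a formal sum $b(\xi,\lambda)\sim\sum_{k\geq 0}b_k(\xi,\lambda)$ with $b_0=(\lambda-p_2(\xi))^{-1}$, determining the higher $b_k$ recursively from the non-commutative $\#$-composition formula so that $(\widetilde\Delta_g-\lambda)\#b\equiv 1$ modulo rapidly decreasing symbols. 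Because the coefficients of $\widetilde\Delta_g$ do not commute with $b_0$ or with each other, each $b_k$ is an explicit non-commutative polynomial in $b_0$, the $p_j$ and their $D_i$-derivatives; it is precisely this recursion, assembled into the Cauchy integral and then summed over $\xi$, that produces the elements $I_k$ of Notation \ref{ik notation}.

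Inserting $b$ into the Cauchy formula and carrying out the $\lambda$-contour integral termwise yields symbols $c_k(\xi)$. The trace formula
$${\rm Tr}_0(\lambda_l(y)\,{\rm Op}(c_k))=\sum_{n\in\mathbb{Z}^d}\tau(y\cdot c_k(n))$$
together with Poisson summation (replacing the lattice sum by $\int_{\mathbb{R}^d}d\xi$ modulo exponentially small error) reduces each term to a $\tau$-integral over $\mathbb{R}^d$. The homogeneity of $b_k$ of degree $-2-k$ in $(\xi,\lambda^{1/2})$, combined with the change of variable $\xi\mapsto t^{-1/2}\xi$, forces the $k$-th contribution to take the form $t^{(k-d)/2}\tau(\nu^{1/2}x\nu^{-1/2}\cdot I_k)$ for a fixed $I_k\in C^\infty(\mathbb{T}^d_\theta)$. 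A parity argument in $\xi$ (the leading symbol $p_2$ is quadratic, while each factor of $p_1$ or of a $\partial_\xi$ derivative toggles the parity) kills the contribution when $k$ is odd, yielding the expansion restricted to even $k$.

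The main obstacle I anticipate is rigorous control of the remainder $b-\sum_{k\leq N}b_k$: after contour integration it must be shown to be trace class with trace norm $O(t^{(N-d)/2})$ as $t\downarrow 0$, uniformly enough in $N$ to upgrade the formal series to a genuine asymptotic expansion. Standard pseudo-differential remainder estimates have to be carefully adapted to the non-commutative setting where symbols and their $D_i$-derivatives fail to commute, and the resolvent estimates $\|(\widetilde\Delta_g-\lambda)^{-1}\|$ driving the Neumann-type iteration must be uniform in $\lambda\in\Gamma$. The needed trace-class membership of $e^{-t\Delta_g}$ is already granted by \cite{MPSZ}; what remains is the quantitative refinement that turns it into the stated asymptotic series, and this bookkeeping—rather than any conceptually novel ingredient—is where the bulk of the full proof should reside.
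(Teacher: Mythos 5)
Your proposal is in the spirit of the existing literature but follows a genuinely different route from the paper's, which deliberately eschews pseudo-differential calculus (and Duhamel-type expansions) in favour of an elementary scheme rooted in the right regular representation. In the paper one writes ${\rm Tr}(\lambda_l(x)e^{-tA_g})=\tau(x\cdot F(t))$ with $F(t)=\sum_{n\in\mathbb{Z}^d}(\lambda_r(e_n)^*e^{-tA_g}\lambda_r(e_n))(1)\in L_2(\mathbb{T}^d_\theta)$, conjugates the resolvent by $\lambda_r(e_n)$ --- producing $A_g+V(n)+\lambda_l(x(n))$ with $x(n)=\sum_{i,j}(g^{-1})_{ij}n_in_j$ --- and iterates the resolvent identity against the multiplication operator $\lambda_l(x(n))$. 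The ``good'' terms carry exact homogeneity in $(n,z^{1/2})$ while the remainder is bounded directly in $L_2$; contour integration then passes to the heat semigroup, and a vector-valued Sobolev version of Poisson summation (Theorem~\ref{sobolev connes lemma}) replaces lattice sums by integrals. One concrete benefit of this route is that it yields the expansion for \emph{all} $x\in L_\infty(\mathbb{T}^d_\theta)$, not just smooth $x$, since the dependence on $x$ enters only through $\tau(x\cdot F(t))$; your formula ${\rm Tr}(\lambda_l(y)\,{\rm Op}(c_k))=\sum_n\tau(y\cdot c_k(n))$ would not obviously allow this.

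Beyond the difference in method there is a substantive gap where you lean on ``bookkeeping''. The resolvent-identity iteration in the paper produces only a finite decomposition ($d+1$ steps), giving Theorem~\ref{finite expansion thm}, an expansion with remainder merely $O(1)$; the paper explicitly remarks that improving the remainder directly to $O(t^N)$ ``does not seem straightforward'', and instead bootstraps the finite expansion into the full asymptotic series by a tensoring trick: one embeds $\mathbb{T}^d_\theta$ into $\mathbb{T}^{d'}_{\theta'}$ with $g'$ equal to $g$ on the first $d\times d$ block and the identity elsewhere, so that $A_{g'}=A_g\otimes1+1\otimes\Delta$ and ${\rm Tr}(\lambda_l(x\otimes1)e^{-tA_{g'}})={\rm Tr}(\lambda_l(x)e^{-tA_g})\cdot{\rm Tr}(e^{-t\Delta})$, then feeds this back into the $d'$-dimensional finite expansion and takes $d'$ arbitrarily large. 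Your parametrix construction faces the analogous remainder-control difficulty; since you neither sketch the required PsDO remainder estimates in the curved noncommutative setting nor invoke any substitute like the tensoring argument, this step is the unresolved core of your proposal rather than routine tidying.
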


\subsection{Connections to earlier works}

In existing literature such theorems are proved by means of pseudo-differential calculus on the non-commutative tori \cite{ConnesDG} (developed for toric manifolds in \cite{Liu1}). An alternative approach was introduced in \cite{IM,IM2} where the case of almost commutative torus was considered. The approach of \cite{IM} is based on Duhamel formula. The resulting expression in \cite{IM,IM2} for the coefficients appears to be the same as the ones in \cite{Connes-Moscovici,FaKh1,FaKh}. 

In our approach, we avoid pseudo-differential calculus or Duhamel formula replacing them with repeated resolvent identity and borrowing methods from non-commutative harmonic analysis.

The outcomes of the presented approach are of potentially wider applicability. Its main advantages are multifold:
\begin{itemize}
\item Theorem \ref{main result} holds for every $x\in L_{\infty}(\mathbb{T}^d_{\theta}),$ not just for a smooth $x;$
\item Theorem \ref{main result} holds for an arbitrary metric tensor $g\in {\rm GL}_d(C^{\infty}(\mathbb{T}^d_{\theta}))$ and not just for a conformal deformation of a flat noncommutative torus;
\item We supply the formulae for all $I_k,$ $k\geq0,$ not just for $k=0,2,4;$
\item Our approach is designed to be applicable to other important examples where pseudo-differential calculus is unavailable e.g. non-commutative spheres;
\end{itemize}

We caution the reader that Theorem \ref{main result} is {\it not} a generalisation of \cite{Connes-Moscovici} et al. In fact, in \cite{Connes-Moscovici} a version of Theorem \ref{main result} is taken as a starting point and the main focus of \cite{Connes-Moscovici,FaKh1,CoFa,Lesch} is on representing the element $I_2$ (or $I_4$) in terms of multiple operator integrals.

Computation of $I_0$ (note that the algorithm in Section \ref{defnot section} yields $I_0=\nu$) is, in fact, related to Connes Trace Theorem \cite{Connes-action} (if we ignore the fact we do not have a bona fide spectral triple). Indeed, the equality
$${\rm Tr}(\lambda_l(x)e^{-t\Delta_g})=t^{-\frac{d}{2}}\tau(x\nu)+O(t^{\frac{\epsilon-d}{2}}),\quad t\downarrow0,$$
is expected to imply (if $\Delta_g$ is replaced by $D^2$ for some Dirac-type operator $D,$ then such an implication is known to hold \cite{SZ-fubini}) that
\begin{equation}\label{trace formula}
\varphi(\lambda_l(x)(1+\Delta_g)^{-\frac{d}{2}})=c_d\tau(x\nu)
\end{equation}
for every normalised trace on $\mathcal{L}_{1,\infty}$ (the principal ideal generated by the harmonic sequence). However, a Laplace-Beltrami operator $\Delta_g$ introduced above is not a square of any Dirac-type operator (or, at least, such a Dirac-type operator $D$ is not yet constructed). Nevertheless, \eqref{trace formula} holds in full generality \cite{MPSZ}.

\subsection{Acknowledgements}

We thank Professor Connes for supplying us with "little lemma" (see Lemma \ref{cll} and Theorem \ref{sobolev connes lemma}) which radically shortened and streamlined our proof. We thank our colleagues R. Ponge (for explaining to us his approach to the Laplace-Beltrami operator in \cite{PongeHa} and for drawing our attention to \cite{Rosenberg_sigma}), B. Iochum (for interest to our work and detailed comparison with \cite{IM,IM2}), Y. Liu (for explaining to us the interplay between analytical and geometrical ideas), M. Lesch (for discussing \cite{Lesch} with us). We also thank N. Azamov, A. Ber and E. McDonald for verification of our proofs and supplying numerous suggestions which improved the exposition. 

\section{Preliminaries}\label{prelim section}

Everything in this section is folklore. We refer the reader to \cite{Rieffel} for deformation quantization (which includes non-commutative torus as a special case), to \cite{Spera} and \cite{XXY} for Sobolev spaces on the non-commutative torus and to \cite{PongeHa}, \cite{Ponge1} for various related information. 

Let $\theta\in M_d(\mathbb{R})$ be an anti-symmetric matrix. Let $A_{\theta}$ be a $\ast-$algebra generated by elements $(U_k)_{1\leq k\leq d}$ {\it of infinite order} satisfying the conditions
$$U_kU_l=e^{i\theta_{kl}}U_lU_k,\quad U_kU_k^{\ast}=U_k^{\ast}U_k=1.$$
Natural Hamel basis in $A_{\theta}$ is $(e_n)_{n\in\mathbb{Z}^d},$
$$e_n=U_1^{n_1}U_2^{n_2}\cdots U_d^{n_d},\quad n\in\mathbb{Z}^d.$$
Note that
$$e_me_n=e^{-i\sum_{j<k}\theta_{jk}n_jm_k}e_{m+n},\quad e_n^{\ast}=e^{-i\sum_{j<k}\theta_{jk}n_jn_k}e_{-n}.$$

Consider a linear functional $\tau$ on $A_{\theta}$ defined by the formula
$$\tau(e_n)=
\begin{cases}
1,& n=0\\
0,& n\neq0
\end{cases}
$$
We have (sums are finite)
$$\tau((\sum_{m\in\mathbb{Z}^d}\alpha_me_m)(\sum_{n\in\mathbb{Z}^d}\beta_n e_n))=\sum_{m,n\in\mathbb{Z}^d}\alpha_m\beta_n\tau(e_me_n)=$$
$$=\sum_{n\in\mathbb{Z}^d}e^{i\sum_{j<k}\theta_{jk}n_jn_k}\alpha_{-n}\beta_n.$$
It is now immediate that
$$\tau(xy)=\tau(yx),\quad x,y\in A_{\theta}.$$

Let us equip $A_{\theta}$ with an inner product defined by the formula
$$\langle x,y\rangle=\tau(x^{\ast}y).$$
This inner product is non-degenerate. Indeed, for $x=\sum_{n\in\mathbb{Z}^d}\alpha_ne_n,$ we have
$$\tau(x^{\ast}x)=\tau((\sum_{m\in\mathbb{Z}^d}\alpha_me_m)^{\ast}(\sum_{n\in\mathbb{Z}^d}\alpha_n e_n))=\sum_{m,n\in\mathbb{Z}^d}\overline{\alpha}_m\beta_n\tau(e_m^{\ast}e_n)=\sum_{n\in\mathbb{Z}^d}|\alpha_n|^2.$$
Hence, $\tau(x^{\ast}x)=0$ implies $x=0.$

We have that $(A_{\theta},\langle\cdot,\cdot\rangle)$ is a pre-Hilbert space. Define a Hilbert space $H$ as the completion of $(A_{\theta},\langle\cdot,\cdot\rangle).$

For $x\in A_{\theta},$ let $\lambda_l(x):A_{\theta}\to A_{\theta}$ be a linear mapping defined by the formula
$$\lambda_l(x):y\to xy,\quad y\in A_{\theta}.$$
Obviously,
$$\lambda_l(x)=\sum_{n\in\mathbb{Z}^d}\alpha_n\lambda_l(e_n),\quad x=\sum_{n\in\mathbb{Z}^d}\alpha_ne_n.$$
Note that
$$\langle\lambda_l(e_n)y,\lambda_l(e_n)y\rangle=\langle e_ny,e_ny\rangle=\tau(y^{\ast}e_n^{\ast}\cdot e_ny)=\tau(y^{\ast}y)=\langle y,y\rangle,\quad y\in A_{\theta}.$$
In particular, $\lambda_l(e_n)$ is a unitary operator on $H.$ Hence, $\lambda_l(x)$ is a bounded operator on $H$ for every $x\in A_{\theta}.$ Now, the mapping
$$x\to\lambda_l(x),\quad x\in A_{\theta}$$
is the left regular representation of the $\ast-$algebra $A_{\theta}.$ Similarly, we define the right regular representation $\lambda_r$ (even though in the present paper we only use $\lambda_r(e_n),$ $n\in\mathbb{Z}^d$).

We define $L_{\infty}(\mathbb{T}^d_{\theta})$ as the weak (or, equivalently, strong) closure of the algebra $\lambda_l(A_{\theta}).$ It is convenient to denote elements of this algebra by $\lambda_l(x).$

The state
$$A\to \langle e_0,Ae_0\rangle,\quad A\in B(H)$$
is tracial on $L_{\infty}(\mathbb{T}^d_{\theta}).$ Indeed,
$$\langle e_0,\lambda_l(x)\lambda_l(y)e_0\rangle=\langle e_0,xye_0\rangle=\tau(xy)=\tau(yx)=\langle e_0,\lambda_l(y)\lambda_l(x)e_0\rangle,\quad x,y\in A_{\theta}.$$
For $x,y\in L_{\infty}(\mathbb{T}^d_{\theta}),$ choose $x_n,y_n\in A_{\theta}$ such that
$$\lambda_l(x_n)\to\lambda_l(x),\quad \lambda_l(y_n)\to\lambda_l(y)$$
strongly as $n\to\infty.$ Hence,
$$\lambda_l(x_n)\lambda_l(y_n)\to\lambda_l(x)\lambda_l(y),\quad \lambda_l(y_n)\lambda_l(x_n)\to\lambda_l(y)\lambda_l(x)$$
strongly as $n\to\infty.$ In particular, we have
$$\langle e_0,\lambda_l(x)\lambda_l(y)e_0\rangle=\lim_{n\to\infty}\langle e_0,\lambda_l(x_n)\lambda_l(y_n)e_0\rangle=$$
$$=\lim_{n\to\infty}\langle e_0,\lambda_l(y_n)\lambda_l(x_n)e_0\rangle=\lim_{n\to\infty}\langle e_0,\lambda_l(y)\lambda_l(x)e_0\rangle.$$
Hence, our state is indeed tracial. This trace extends $\tau$ and, for this reason, is also denoted by $\tau.$

Normality of the tracial state $\tau$ follows directly from the definition. We claim that $\tau$ is a faithful trace. Indeed, if $p\in L_{\infty}(\mathbb{T}^d_{\theta})$ is a projection with $\tau(p)=0,$ then
$$\langle p(e_n),p(e_n)\rangle=\langle (p\lambda_l(e_n))(e_0),(p\lambda_l(e_n))(e_0)\rangle=$$
$$=\langle e_0,((p\lambda_l(e_n))^{\ast}(p\lambda_l(e_n)))(e_0)\rangle=\langle e_0,(\lambda_l(e_n)^{\ast}p\lambda_l(e_n))(e_0)\rangle=$$
$$=\tau(\lambda_l(e_n)^{\ast}p\lambda_l(e_n))=\tau(p\lambda_l(e_n)\lambda_l(e_n)^{\ast})=\tau(p)=0.$$
Hence, $p(e_n)=0$ for every $n\in\mathbb{Z}^d.$ Since $\{e_n\}_{n\in\mathbb{Z}^d}$ is an orthonormal basis in $H,$ it follows that $p=0.$ Hence, $\tau$ is faithful.

\begin{ex}\label{tensor example} Take $d'>d$ and consider $d'\times d'$ matrix $\theta'$ whose left upper corner is $\theta.$ Suppose that $\theta'_{kl}=0$ when $k>d$ or when $l>d.$ We have $L_{\infty}(\mathbb{T}^{d'}_{\theta'})=L_{\infty}(\mathbb{T}^d_{\theta})\bar{\otimes}L_{\infty}(\mathbb{T}^{d'-d}).$
\end{ex}
\begin{proof} Let $\{U_k\}_{1\leq k\leq d'}$ be the unitaries in the definition of $\mathbb{T}^{d'}_{\theta}.$ Note that
\begin{enumerate}[{\rm (1)}]
\item elements $\{U_k\}_{1\leq k\leq d}$ generate the algebra $A_{\theta};$
\item elements $\{U_k\}_{d<k\leq d'}$ generate the algebra $A_0;$
\item if $1\leq k\leq d$ and $l>d,$ then $U_k$ commutes with $U_l;$
\end{enumerate}

Define trace preserving $\ast-$isomorphism $\pi:A_{\theta}\otimes A_0\to A_{\theta'}$ by setting
$$\pi(U_k\otimes U_l)=U_kU_l,\quad 1\leq k\leq d,\quad d<l\leq d'.$$
Since $\pi$ preserves the trace, it follows that
$$\langle \pi(x),\pi(y)\rangle=\langle x,y\rangle,\quad x,y\in A_{\theta}\otimes A_0.$$
Thus, $\pi$ extends to a Hilbert space isomorphism $U:H_{\theta}\otimes H_0\to H_{\theta'}.$ 

It is immediate that
$$\lambda_l(\pi(x))=U(\lambda_l\otimes\lambda_l)(x)U^{-1},\quad x\in A_{\theta}\otimes A_0.$$
Hence, the mapping $z\to UzU^{-1}$ delivers a $\ast-$isomorphism from the algebra $L_{\infty}(\mathbb{T}^d_{\theta})\bar{\otimes}L_{\infty}(\mathbb{T}^{d'-d})$ to $L_{\infty}(\mathbb{T}^{d'}_{\theta'}).$
\end{proof}

As usual, $L_p(\mathbb{T}^d_{\theta})$ is the $L_p-$space associated to the von Neumann algebra $L_{\infty}(\mathbb{T}^d_{\theta})$ and the trace $\tau.$

The Hilbert space $H$ is naturally identified with $L_2(\mathbb{T}^d_{\theta}).$ Every element $x\in L_2(\mathbb{T}^d_{\theta})$ admits a unique representation of the form
$$x=\sum_{n\in\mathbb{Z}^d}\hat{x}(n)e_n,\quad \{\hat{x}(n)\}_{n\in\mathbb{Z}^d}\in l_2(\mathbb{Z}^d).$$
This Fourier picture allows us to define Sobolev spaces $W^{k,2}(\mathbb{T}^d_{\theta})$ by setting
$$W^{k,2}(\mathbb{T}^d_{\theta})=\Big\{x\in L_2(\mathbb{T}^d_{\theta}):\ \sum_{n\in\mathbb{Z}^d}|n|_2^{2k}|\hat{x}(n)|^2<\infty\Big\}.$$

For $1\leq k\leq d,$ define self-adjoint operators $D_k:W^{1,2}(\mathbb{T}^d_{\theta})\to L_2(\mathbb{T}^d_{\theta})$ by setting
$$D_k(x)=\sum_{n\in\mathbb{Z}^d}n_k\hat{x}(n)e_n.$$

\begin{fact}\label{basic exponential fact} We have (the second equality holds for all $x\in L_{\infty}(\mathbb{T}^d_{\theta})$)
$$D_i\lambda_r(e_n)=\lambda_r(e_n)D_i+n_i\lambda_r(e_n),\quad \lambda_l(x)\lambda_r(e_n)=\lambda_r(e_n)\lambda_l(x).$$
\end{fact}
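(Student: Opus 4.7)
The statement is a pair of algebraic commutation relations, and both follow by direct computation on the Fourier basis, with standard density/unitarity arguments to pass to the full domains.

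For the second equality, I would begin on the dense subalgebra $A_\theta$. Since $\lambda_l$ acts by left multiplication and $\lambda_r$ by right multiplication, associativity in $A_\theta$ immediately gives
\[
\lambda_l(x)\lambda_r(e_n)y=x(ye_n)=(xy)e_n=\lambda_r(e_n)\lambda_l(x)y
\]
for $x,y\in A_\theta$. Then for general $x\in L_\infty(\mathbb{T}^d_\theta)$, choose $x_k\in A_\theta$ with $\lambda_l(x_k)\to\lambda_l(x)$ strongly (as used repeatedly in the preceding traciality argument); since $\lambda_r(e_n)$ is bounded (indeed, a unitary on $H$, by the same computation as for $\lambda_l(e_n)$), the products $\lambda_l(x_k)\lambda_r(e_n)$ and $\lambda_r(e_n)\lambda_l(x_k)$ converge strongly to $\lambda_l(x)\lambda_r(e_n)$ and $\lambda_r(e_n)\lambda_l(x)$ respectively, yielding the claim on all of $H$.

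For the first equality, I would work on the Hamel basis. Using the formula $e_me_n=e^{-i\sum_{j<k}\theta_{jk}n_jm_k}e_{m+n}$ stated earlier, one has
\[
\lambda_r(e_n)e_m = e^{-i\sum_{j<k}\theta_{jk}n_jm_k}\,e_{m+n}.
\]
Applying $D_i$, which acts diagonally by $D_ie_p=p_ie_p$, gives a factor $(m+n)_i=m_i+n_i$, whereas $\lambda_r(e_n)D_ie_m$ contributes only the factor $m_i$ in front of the same exponential. Subtracting yields
\[
(D_i\lambda_r(e_n)-\lambda_r(e_n)D_i)e_m = n_i\,\lambda_r(e_n)e_m,
\]
which is the required identity on each basis vector. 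By linearity this extends to the finite linear span of the $e_m$, which is a core for $D_i$. Since $\lambda_r(e_n)$ preserves the basis up to a unimodular scalar, it maps $W^{1,2}(\mathbb{T}^d_\theta)$ into itself, so closing both sides gives the equality as operators on $W^{1,2}(\mathbb{T}^d_\theta)$.

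There is no real obstacle here; the only thing to be slightly careful about is the domain bookkeeping for $D_i$ in part (1), which is handled by observing that $\lambda_r(e_n)$ preserves the Sobolev space $W^{1,2}(\mathbb{T}^d_\theta)$ (the Fourier coefficients of $\lambda_r(e_n)x$ are a unimodular translate of those of $x$). I would keep the write-up short, as both identities are essentially immediate from the multiplication rule for $(e_n)_{n\in\mathbb{Z}^d}$.
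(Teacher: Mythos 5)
Your proposal is correct and takes essentially the same approach as the paper: the first identity is verified by the same direct computation on the Hamel basis $(e_m)$ using $e_me_n=c_{m,n}e_{m+n}$ and $D_ie_p=p_ie_p$, while the paper dismisses the second identity as obvious (your associativity-plus-strong-density argument simply spells this out). The additional remarks on $\lambda_r(e_n)$ preserving $W^{1,2}(\mathbb{T}^d_\theta)$ are reasonable domain bookkeeping that the paper omits.
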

\begin{proof} Second equality is obvious. Let's check the first equality. Recall that
$$e_me_n=c_{m,n}e_{m+n}.$$
We have
$$(D_i\lambda_r(e_n))(e_m)=D_i(e_me_n)=c_{m,n}D_i(e_{m+n})=c_{m,n}(m_i+n_i)e_{m+n}=$$
$$=(m_i+n_i)e_me_n=(m_ie_m)\cdot e_n+n_i\cdot e_me_n=(\lambda_r(e_n)D_i+n_i\lambda_r(e_n))(e_m).$$
\end{proof}

Set
$$D^{\alpha}(x)=(\prod_{k=1}^dD_k^{\alpha_k})(x),\quad \alpha=(\alpha_1,\cdots,\alpha_d)\in\mathbb{Z}_+^d.$$
Set
$$C^{\infty}(\mathbb{T}^d_{\theta})=\Big\{x\in L_{\infty}(\mathbb{T}^d_{\theta}):\ D^{\alpha}(x)\in L_{\infty}(\mathbb{T}^d_{\theta})\mbox{ for all }\alpha\Big\},$$
$$W^{k,p}(\mathbb{T}^d_{\theta})=\Big\{x\in L_p(\mathbb{T}^d_{\theta}):\ D^{\alpha}x\in L_p(\mathbb{T}^d_{\theta}),\quad |\alpha|_1\leq k\Big\},\quad p>0,\quad k\in\mathbb{Z}_+.$$

Here, $|\alpha|_1$ is the $l_1-$length of the vector $\alpha\in\mathbb{Z}^d.$

We equip $W^{k,p}(\mathbb{T}^d_{\theta})$ with its natural norm
\begin{equation}\label{sobolev norm def}
\|x\|_{W^{k,p}}=\sum_{|\alpha|_1\leq k}\|D^{\alpha}x\|_p.
\end{equation}

For $p=2,$ the space $W^{k,p}(\mathbb{T}^d_{\theta})$ coincides with earlier defined $W^{k,2}(\mathbb{T}^d_{\theta}).$ Indeed,
$$D^{\alpha}(\sum_{n\in\mathbb{Z}^d}\hat{x}(n)e_n)=\sum_{n\in\mathbb{Z}^d}n^{\alpha}\hat{x}(n)e_n,\quad n^{\alpha}=\prod_{k=1}^dn_k^{\alpha_k}.$$
Hence,
$$\|D^{\alpha}x\|_2=\Big(\sum_{n\in\mathbb{Z}^d}|n^{\alpha}|^2\cdot|\hat{x}(n)|^2\Big)^{\frac12}.$$
Therefore,
$$\sum_{|\alpha|_1\leq k}\|D^{\alpha}x\|_2=\sum_{|\alpha|_1\leq k}\Big(\sum_{n\in\mathbb{Z}^d}|n^{\alpha}|^2\cdot|\hat{x}(n)|^2\Big)^{\frac12}.$$
Obviously,
$$|n^{\alpha}|\leq|n|_2^{|\alpha|_1},\quad n\in\mathbb{Z}^d,\quad \alpha\in\mathbb{Z}_+^d,$$
and, therefore
$$\sum_{|\alpha|_1\leq k}\|D^{\alpha}x\|_2\leq\sum_{|\alpha|_1\leq k}\Big(\sum_{n\in\mathbb{Z}^d}|n|_2^{2k}\cdot|\hat{x}(n)|^2\Big)^{\frac12}=\Big(\sum_{n\in\mathbb{Z}^d}|n|_2^{2k}\cdot|\hat{x}(n)|^2\Big)^{\frac12}\cdot\sum_{|\alpha|_1\leq k}1.$$
On the other hand, we can consider only
$$\alpha=(k,0,\cdots,0),\quad \alpha=(0,k,0,\cdots,0),\cdots.$$
We have
$$\sum_{|\alpha|_1\leq k}\|D^{\alpha}x\|_2\geq\sum_{l=1}^d\Big(\sum_{n\in\mathbb{Z}^d}|n_l|^{2k}\cdot|\hat{x}(n)|^2\Big)^{\frac12}\geq$$
$$\geq\Big(\sum_{l=1}^d\sum_{n\in\mathbb{Z}^d}|n_l|^{2k}\cdot|\hat{x}(n)|^2\Big)^{\frac12}
=\Big(\sum_{n\in\mathbb{Z}^d}|\hat{x}(n)|^2\cdot\sum_{l=1}^d|n_l|^{2k}\Big)^{\frac12}.$$
On the other hand, we have
$$\sum_{l=1}^d|n_l|^{2k}\geq d^{1-k}|n|_2^{2k},\quad n\in\mathbb{Z}^d.$$
Hence,
$$\sum_{|\alpha|_1\leq k}\|D^{\alpha}x\|_2\geq d^{\frac{1-k}{2}}\Big(\sum_{n\in\mathbb{Z}^d}|n|_2^{2k}\cdot|\hat{x}(n)|^2\Big)^{\frac12}.$$
Thus,
$$\sum_{|\alpha|_1\leq k}\|D^{\alpha}x\|_2\approx\Big(\sum_{n\in\mathbb{Z}^d}|n|_2^{2k}\cdot|\hat{x}(n)|^2\Big)^{\frac12}.$$

We also set
$$\Delta=\sum_{k=1}^dD_k^2.$$
Obviously,
$$\Delta:W^{2,2}(\mathbb{T}^d_{\theta})\to L_2(\mathbb{T}^d_{\theta})$$
is self-adjoint (and positive).

\section{Definition of a curved non-commutative torus}

\subsection{Curved non-commutative torus}

Here we define curved non-commutative torus and Laplace-Beltrami operator on it.

For a positive invertible element $\nu\in L_{\infty}(\mathbb{T}^d_{\theta}),$ consider
$$\phi_{\nu}:x\to \tau(x\nu),\quad x\in L_{\infty}(\mathbb{T}^d_{\theta}).$$
Define a new inner product on $L_2(\mathbb{T}^d_{\theta})$ by setting
$$\langle u,v\rangle_{\nu}=\phi_{\nu}(vu^*)=\tau(u^*\nu v).$$

Let ${\rm GL}_d(L_{\infty}(\mathbb{T}^d_{\theta}))$ be the set of invertible matrices with matrix elements from $L_{\infty}(\mathbb{T}^d_{\theta}).$ Let ${\rm GL}_d(C^{\infty}(\mathbb{T}^d_{\theta}))$ be the set of invertible matrices with matrix elements from $C^{\infty}(\mathbb{T}^d_{\theta}).$

Riemannian metric on $\mathbb{T}^d_{\theta}$  (see \cite{Rosenberg_sigma} or \cite{PongeHa}) is a positive element of ${\rm GL}_d(C^{\infty}(\mathbb{T}^d_{\theta}))$ such that the elements $g_{ij}$ and $(g^{-1})_{ij}$ are self-adjoint for all $1\leq i,j\leq d$  (see \cite{PongeHa}).

\subsection{Laplace-Beltrami operator}\label{lbo subsection}

Let $g\in {\rm GL}_d(C^{\infty}(\mathbb{T}^d_{\theta}))$ be a Riemannian metric. In the classical differential geometry, Laplace-Beltrami operator involves the square root of the determinant of $g.$ In the non-commutative case, there is no notion of a determinant for a matrix with non-commuting elements. We propose the following substitution for a "square root of the determinant" of $g.$ Set 
\begin{equation}\label{nu def}
\nu=\pi^{-\frac{d}{2}}\int_{\mathbb{R}^d}e^{-\sum_{i,j=1}^dt_it_j(g^{-1})_{ij}}dt.
\end{equation}
Note that $(g^{-1})_{ij}\in C^{\infty}(\mathbb{T}^d_{\theta})$ (see \cite{PongeHa}). Hence, $\sum_{i,j=1}^dt_it_j(g^{-1})_{ij}\in C^{\infty}(\mathbb{T}^d_{\theta})$ for all $t\in\mathbb{R}^d.$ It follows that $e^{-\sum_{i,j=1}^dt_it_j(g^{-1})_{ij}}\in C^{\infty}(\mathbb{T}^d_{\theta})$ for all $t\in\mathbb{R}^d.$ Moreover, the integrand in \eqref{nu def} is Bochner integrable in every $C^m(\mathbb{T}^d_{\theta}),$ $m\geq0.$ Thus, $\nu\in C^{\infty}(\mathbb{T}^d_{\theta})$ and $\nu^{\frac12}\in C^{\infty}(\mathbb{T}^d_{\theta}).$

This choice of $\nu$ may seem unexpected, however it appears to be very natural. In fact, this is the only choice of $\nu$ which makes the Laplace-Beltrami operator defined below compatible with Connes Integration Formula (see \cite{MPSZ}).

Laplace-Beltrami operator $\Delta_g$ is defined on the Hilbert space $L_2(\mathbb{T}^d_{\theta})$ equipped with the inner product $\langle\cdot,\cdot\rangle_{\nu}$ by the formula  (see \cite{PongeHa} or \cite{Rosenberg_sigma})
$$\Delta_g=\lambda_l(\nu^{-1})\sum_{i,j=1}^dD_i\lambda_l(\nu^{\frac12}(g^{-1})_{ij}\nu^{\frac12})D_j.$$
Laplace-Beltrami operator is self-adjoint and positive on the domain $W^{2,2}(\mathbb{T}^d_{\theta})$  (see Proposition 9.12 in \cite{PongeHa}).

\subsection{Statement of the task}

The task is to find the asymptotic for the function
$$t\to {\rm Tr}(\lambda_l(x)e^{-t\Delta_g}),\quad t\downarrow0.$$
Here, $x\in L_{\infty}(\mathbb{T}^d_{\theta})$ and $g\in {\rm GL}_d(C^{\infty}(\mathbb{T}^d_{\theta}))$ is a Riemannian metric.

First, note that the mapping $U=\lambda_l(\nu^{-\frac12})$ is a unitary operator from $(L_2(\mathbb{T}^d_{\theta}),\langle\cdot,\cdot\rangle)$ to $(L_2(\mathbb{T}^d_{\theta}),\langle\cdot,\cdot\rangle_{\nu})$ (this follows directly from the definition of these inner products). Define a self-adjoint (and positive) operator $A_g$ on the Hilbert space $(L_2(\mathbb{T}^d_{\theta}),\langle\cdot,\cdot\rangle)$ with the domain $W^{2,2}(\mathbb{T}^d_{\theta})$ by setting
$$A_g=U^{-1}\Delta_gU.$$
Equivalently,
\begin{equation}\label{ag expression}
A_g=\lambda_l(\nu^{-\frac12})\sum_{i,j=1}^dD_i\lambda_l(\nu^{\frac12}(g^{-1})_{ij}\nu^{\frac12})D_j\lambda_l(\nu^{-\frac12}).
\end{equation}

\begin{ex}[Conformal deformation of a flat torus]
For example, if $d=2$ and $g=(h\delta_{ij}),$ then
$$A_g=\lambda_l(h^{-\frac12})\Delta\lambda_l(h^{-\frac12})$$
exactly as it should be.
\end{ex}
\begin{proof} Obviously, $(g^{-1})_{ij}=h^{-1}\delta_{ij}.$ Hence,
$$\nu=\frac1{\pi}\int_{\mathbb{R}^2}e^{-|t|^2h^{-1}}dt=h.$$
Hence,
$$\nu^{\frac12}(g^{-1})_{ij}\nu^{\frac12}=\delta_{ij}.$$
This completes the proof.
\end{proof}

The task can be now equivalently restated as follows: to find an asymptotic for the function
$$t\to {\rm Tr}(\lambda_l(\nu^{\frac12}x\nu^{-\frac12})e^{-tA_g}),\quad t\downarrow0.$$
Here, $x\in L_{\infty}(\mathbb{T}^d_{\theta}),$ $g\in {\rm GL}_d(C^{\infty}(\mathbb{T}^d_{\theta}))$ is a Riemannian metric and $\nu$ is defined by \eqref{nu def}.

Indeed, we have
$${\rm Tr}(\lambda_l(x)e^{-t\Delta_g})={\rm Tr}(U^{-1}\lambda_l(x)e^{-t\Delta_g}U)=$$
$$={\rm Tr}(U^{-1}\lambda_l(x)U\cdot U^{-1}e^{-t\Delta_g}U)={\rm Tr}(\lambda_l(\nu^{\frac12}x\nu^{-\frac12})e^{-tA_g}).$$

\section{Definitions and notations}\label{defnot section}


In this short section, we introduce the notations used in the statement and proof of Theorem \ref{main result}, particularly, the functions ${\rm good}_k$ and ${\rm bad}_n.$

\begin{nota}\label{first resolvent nota} For $s\in\mathbb{R}^d,$ set
$$V(s)=\sum_{i=1}^ds_iA_i,$$
where
$$A_i=\sum_{j=1}^d\lambda_l((g^{-1})_{ij}\nu^{\frac12})D_j\lambda_l(\nu^{-\frac12})+\sum_{j=1}^d\lambda_l(\nu^{-\frac12})D_j\lambda_l(\nu^{\frac12}(g^{-1})_{ji}),\quad 1\leq i\leq d.$$
\end{nota}

\begin{nota}\label{second resolvent nota} Let $g=(g_{ij})\in{\rm GL}_d(C^{\infty}(\mathbb{T}^d_{\theta}))$ be a Riemannian metric. For every $s\in\mathbb{R}^d,$ set
$$x(s)=\sum_{i,j=1}^d(g^{-1})_{ij}s_is_j.$$
\end{nota}

\begin{nota}\label{third resolvent nota} For every $z\in\mathbb{C}\backslash\mathbb{R}_-$ and for every $s\in\mathbb{R}^d,$ set $x_0(s,z)=1$ and
$$x_m(s,z)=(V(s)+A_g)((x(s)+z)^{-1}x_{m-1}(s,z)),\quad m\in\mathbb{N}.$$
Here, $A_g$ is defined in \eqref{ag expression}.
\end{nota}

\begin{nota}\label{fourth resolvent nota} Let $\mathscr{A}\subset\mathbb{N}.$ For every $z\in\mathbb{C}\backslash\mathbb{R}_-$ and for every $s\in\mathbb{R}^d,$ set
 $x_0^{\mathscr{A}}(s,z)=1$ and
$$x_m^{\mathscr{A}}(s,z)=(V(s))((x(s)+z)^{-1}x_{m-1}^{\mathscr{A}}(s,z)),\quad 1\leq m\in\mathscr{A},$$
$$x_m^{\mathscr{A}}(s,z)=A_g((x(s)+z)^{-1}x_{m-1}^{\mathscr{A}}(s,z)),\quad 1\leq m\notin\mathscr{A}.$$
\end{nota}

Observe that, for $\mathscr{A}\subset\{1,\cdots,m\},$ we have
\begin{equation}\label{xma homogeneity}
x_m^{\mathscr{A}}(rs,r^2z)=r^{|\mathscr{A}|-2m}x_m^{\mathscr{A}}(s,z).
\end{equation}

\begin{nota}\label{fifth resolvent nota}
For every $z\in\mathbb{C}\backslash\mathbb{R}_-$ and for every $s\in\mathbb{R}^d,$ set
$${\rm good}_k(s,z)=(x(s)+z)^{-1}\sum_{\frac{k}{2}\leq m\leq \min(k,d)}(-1)^m\sum_{\substack{\mathscr{A}\subset\{1,\cdots,m\}\\ |\mathscr{A}|=2m-k}}x_m^{\mathscr{A}}(s,z),$$
$${\rm corr}_k(s,z)=(x(s)+z)^{-1}\sum_{\frac{k}{2}\leq m\leq k}(-1)^m\sum_{\substack{\mathscr{A}\subset\{1,\cdots,m\}\\ |\mathscr{A}|=2m-k}}x_m^{\mathscr{A}}(s,z),$$
$${\rm bad}_n(z)=\Big(\lambda_r(e_n)^*\frac1{A_g+z}\lambda_r(e_n)\Big)(x_{d+1}(n,z)).$$
\end{nota}
Obviously, ${\rm good}_k={\rm corr}_k$ for $k\leq d.$

Key feature of the term ${\rm good}_k$ is its homogeneity
\begin{equation}\label{goodk homogeneity}
{\rm good}_k(rs,r^2z)=r^{-k-2}{\rm good}_k(s,z),
\end{equation}
which follows immediately from \eqref{xma homogeneity}.

\begin{nota}\label{first exponential notation} For every $0\neq s\in\mathbb{R}^d$ set
$${\rm Good}_k(s)=\frac1{2\pi}\int_{\mathbb{R}}{\rm good}_k(s,i\lambda)e^{i\lambda}d\lambda.$$
$${\rm Corr}_k(s)=\frac1{2\pi}\int_{\mathbb{R}}{\rm corr}_k(s,i\lambda)e^{i\lambda}d\lambda.$$
For $k>0,$ integrals are well defined; for $k=0,$ integrals should be understood in the sense of principal value.
\end{nota}
Obviously, ${\rm Good}_k={\rm Corr}_k$ for $k\leq d.$

\begin{nota}\label{ik notation} For every $k\in\mathbb{Z}_+,$ we set
$$I_k=\int_{\mathbb{R}^d}{\rm Corr}_k(s)ds.$$
\end{nota}

\section{Strategy}

In the subsequent lemma, weak convergence is asserted, not assumed.

\begin{lem}\label{first strategy lemma} Let $g\in {\rm GL}_d(C^{\infty}(\mathbb{T}^d_{\theta}))$ be a Riemannian metric and let $A_g$ be the operator defined by \eqref{ag expression}. For every $x\in L_2(\mathbb{T}^d_{\theta}),$ we have
$${\rm Tr}(\lambda_l(x)e^{-tA_g})=\tau(x\cdot F(t)).$$
Here $F(t)\in L_2(\mathbb{T}^d_{\theta})$ is given by the series (converging weakly in $L_2(\mathbb{T}^d_{\theta})$)
$$F(t)=\sum_{n\in\mathbb{Z}^d}(\lambda_r(e_n)^*e^{-tA_g}\lambda_r(e_n))(1).$$
\end{lem}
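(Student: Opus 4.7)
The plan is to evaluate the trace using the orthonormal basis $\{e_n\}_{n\in\mathbb{Z}^d}$ of $L_2(\mathbb{T}^d_\theta)$ and to exploit the unitary implementation $e_n = \lambda_r(e_n)(1)$ together with the commutation relation $\lambda_l(x)\lambda_r(e_n) = \lambda_r(e_n)\lambda_l(x)$ recorded in Fact \ref{basic exponential fact}. I first prove the identity for $x$ in the dense subspace $L_\infty(\mathbb{T}^d_\theta) \subset L_2(\mathbb{T}^d_\theta)$, where $\lambda_l(x) e^{-tA_g}$ is trace class (bounded times trace class), and then extend to $x \in L_2(\mathbb{T}^d_\theta)$ by $L_2$-continuity of both sides.

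For $x \in L_\infty$, the trace computed in the orthonormal basis reads ${\rm Tr}(\lambda_l(x) e^{-tA_g}) = \sum_n \langle e_n, \lambda_l(x) e^{-tA_g} e_n\rangle$. Since $\lambda_r(e_n)$ is unitary on $L_2(\mathbb{T}^d_\theta)$ and commutes with $\lambda_l(x)$, using $e_n = \lambda_r(e_n)(1)$ and inserting $\lambda_r(e_n)\lambda_r(e_n)^* = \mathbf{1}$ recasts each summand as
$$\langle 1, \lambda_l(x) \cdot (\lambda_r(e_n)^* e^{-tA_g} \lambda_r(e_n))(1)\rangle = \tau(x \cdot y_n(t)),\quad y_n(t) := (\lambda_r(e_n)^* e^{-tA_g} \lambda_r(e_n))(1).$$
Summation over $n$ yields ${\rm Tr}(\lambda_l(x) e^{-tA_g}) = \sum_n \tau(x \cdot y_n(t))$ for all $x \in L_\infty$.

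To interpret the right-hand side as $\tau(x F(t))$ with $F(t) \in L_2(\mathbb{T}^d_\theta)$, I set $F_N(t) := \sum_{|n|_\infty \leq N} y_n(t)$ and show weak $L_2$-convergence of $F_N(t)$. Since $\lambda_r(e_n)^*(u) = u e_n^*$, one has $y_n(t) = e^{-tA_g}(e_n) \cdot e_n^*$, so $\|y_n(t)\|_2 = \|e^{-tA_g}(e_n)\|_2$ and $\sum_n \|y_n(t)\|_2^2 = \|e^{-tA_g}\|_{HS}^2 < \infty$ (since $e^{-tA_g}$ is trace class, hence Hilbert--Schmidt). For every $y \in L_\infty$, the scalar sequence $\tau(y F_N(t)) = \sum_{|n|_\infty \leq N} \langle e_n, \lambda_l(y) e^{-tA_g} e_n\rangle$ converges to ${\rm Tr}(\lambda_l(y) e^{-tA_g})$. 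Combined with a uniform bound on $\|F_N(t)\|_2$ and density of $L_\infty$ in $L_2$, this identifies a weak limit $F(t) \in L_2(\mathbb{T}^d_\theta)$ satisfying $\tau(y F(t)) = {\rm Tr}(\lambda_l(y) e^{-tA_g})$ for all $y \in L_\infty$, and then for all $y \in L_2$ by continuity.

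The main obstacle is establishing the uniform $L_2$-bound on $\|F_N(t)\|_2$: square-summability of $(y_n(t))$ alone is insufficient because the $y_n(t)$ are not mutually orthogonal, so the bound must exploit the smoothing (elliptic) behaviour of $e^{-tA_g}$, specifically the rapid decay of $\|e^{-tA_g}(e_n)\|_2$ in $|n|$. With this bound in hand, the identity for $x \in L_\infty$ extends to $x \in L_2$ by continuity of both sides in $x$.
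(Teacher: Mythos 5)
Your basis computation is exactly the one the paper uses, and your identification $y_n(t)=(e^{-tA_g}e_n)\,e_n^*$ with $\|y_n(t)\|_2=\|e^{-tA_g}e_n\|_2$ is correct. However, you explicitly flag the uniform bound $\sup_N\|F_N(t)\|_2<\infty$ as ``the main obstacle'' and then do not prove it; you only remark that it ``must exploit the smoothing behaviour of $e^{-tA_g}$.'' That leaves the central claim of the lemma --- weak $L_2$-convergence of the series defining $F(t)$ --- unestablished. Your own correct observation, that square-summability of $\|y_n(t)\|_2$ is insufficient because the $y_n(t)$ are not mutually orthogonal, shows that the bound is a genuine issue and not a routine filler step. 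As it stands, the passage from $x\in L_\infty$ to $x\in L_2$ by ``$L_2$-continuity of both sides'' is circular: continuity of the left-hand side in $x\in L_2$ is exactly what needs to be shown.

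The paper closes this gap by invoking the stronger input from \cite{MPSZ} that $\lambda_l(x)e^{-tA_g}\in\mathcal{L}_1$ for \emph{every} $x\in L_2(\mathbb{T}^d_\theta)$, not merely for $x\in L_\infty$. With this, the series $\sum_n\langle e_n,\lambda_l(x)e^{-tA_g}e_n\rangle=\sum_n\tau(x\,y_n(t))$ converges (indeed absolutely) for every $x\in L_2$; since the partial sums are the bounded functionals $x\mapsto\tau(x\,F_N(t))$ with norm $\|F_N(t)\|_2$, the uniform boundedness principle yields $\sup_N\|F_N(t)\|_2<\infty$ and hence the weak limit $F(t)\in L_2$, for free. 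This also makes the detour through $x\in L_\infty$ unnecessary. Alternatively, if you want to avoid \cite{MPSZ} and work only with $e^{-tA_g}$ trace class, you must actually supply the decay estimate you allude to, e.g.\ show that $e^{-tA_g}(1+\Delta)^s$ is bounded for some $s>d/2$ so that $\|e^{-tA_g}e_n\|_2=O(|n|^{-2s})$ and $\sum_n\|y_n(t)\|_2<\infty$; but that elliptic regularity estimate is itself nontrivial and is not in your write-up.
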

\begin{proof} It follows from \cite{MPSZ} that
$$\lambda_l(x)e^{-tA_g}\in\mathcal{L}_1$$
for every $x\in L_2(\mathbb{T}^d_{\theta}).$

For every $T\in\mathcal{L}_1,$ we have
$${\rm Tr}(T)=\sum_{n\in\mathbb{Z}^d}\langle e_n,Te_n\rangle.$$
Therefore,
$${\rm Tr}(\lambda_l(x)e^{-tA_g})=\sum_{n\in\mathbb{Z}^d}\langle e_n,(\lambda_l(x)e^{-tA_g})(e_n)\rangle.$$
Since $e_n=\lambda_r(e_n)1$ and since $\lambda_r(e_n)$ commutes with $\lambda_l(x),$ it follows that
$$\langle e_n,(\lambda_l(x)e^{-tA_g})(e_n)\rangle=\langle (\lambda_r(e_n))(1),(\lambda_l(x)e^{-tA_g}\lambda_r(e_n))(1)\rangle=$$
$$=\langle 1, (\lambda_r(e_n)^{\ast}\lambda_l(x)e^{-tA_g}\lambda_r(e_n))(1)\rangle=\langle 1, (\lambda_l(x)\lambda_r(e_n)^{\ast}e^{-tA_g}\lambda_r(e_n))(1)\rangle.$$
Combining these equalities, we obtain
$${\rm Tr}(\lambda_l(x)e^{-tA_g})=\sum_{n\in\mathbb{Z}^d}\langle 1,(\lambda_l(x)\lambda_r(e_n)^{\ast}e^{-tA_g}\lambda_r(e_n))(1)\rangle=$$
$$=\Big\langle 1,\lambda_l(x)\Big(\sum_{n\in\mathbb{Z}^d}\lambda_r(e_n)^{\ast}e^{-tA_g}\lambda_r(e_n)1\Big)\Big\rangle=\langle 1,\lambda_l(x)(F(t))\rangle=\tau(x\cdot F(t)).$$
\end{proof}

In Section \ref{resolvent splitting section}, we prove the following result.

\begin{thm}\label{resolvent splitting theorem} For every $0\neq n\in\mathbb{Z}^d$ and for every $0\neq z\in\mathbb{C}\backslash\mathbb{R}_-,$ we have
\begin{equation}\label{resolvent splitting eq}
\Big(\lambda_r(e_n)^*\frac1{A_g+z}\lambda_r(e_n)\Big)(1)=\sum_{k=0}^{2d}{\rm good}_k(n,z)+(-1)^{d+1}{\rm bad}_n(z),
\end{equation}
where
\begin{enumerate}[{\rm (i)}]
\item\label{rspa} functions ${\rm good}_k$ and ${\rm bad}_n$ are explicitly defined in Notation \ref{fifth resolvent nota}.
\item\label{rspb} functions ${\rm good}_k(s,\cdot),$ $k\geq0,$ are analytic on $\mathbb{C}\backslash\mathbb{R}_-.$
\item\label{rspc} for every $z$ with $\Re(z)\leq0,$ we have
$$\|{\rm bad}_n(z)\|_2=O\Big(|z|^{-1}(|n|^2+|z|)^{-\frac{d+1}{2}}\Big),\quad n\in\mathbb{Z}^d.$$
\end{enumerate}
\end{thm}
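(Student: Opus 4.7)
The proof splits into three steps corresponding to the three claims.

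\emph{Step 1: Conjugation formula.} By Fact \ref{basic exponential fact}, $\lambda_r(e_n)^*D_i\lambda_r(e_n)=D_i+n_i$ and $\lambda_r(e_n)$ commutes with every $\lambda_l(\cdot)$. Substituting into \eqref{ag expression} and expanding $(D_i+n_i)\lambda_l(\nu^{1/2}(g^{-1})_{ij}\nu^{1/2})(D_j+n_j)$, the $n_in_j$-term $\lambda_l(\nu^{-1/2})\lambda_l(\nu^{1/2}(g^{-1})_{ij}\nu^{1/2})\lambda_l(\nu^{-1/2})$ collapses (the outer $\nu^{\pm 1/2}$ factors cancel with the adjacent inner ones) to $\lambda_l((g^{-1})_{ij})$, summing over $i,j$ to $\lambda_l(x(n))$; the two cross terms, after the swap $i\leftrightarrow j$ in one of them and using $(g^{-1})_{ji}=(g^{-1})_{ij}$, assemble precisely into $V(n)=\sum_i n_i A_i$. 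Therefore
$$\lambda_r(e_n)^*(A_g+z)\lambda_r(e_n)=A_g+V(n)+\lambda_l(x(n))+z,$$
so the left-hand side of \eqref{resolvent splitting eq} equals $R(1)$ with $R=(A_g+V(n)+\lambda_l(x(n))+z)^{-1}$.

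\emph{Step 2: Resolvent iteration.} Set $R_0=\lambda_l((x(n)+z)^{-1})$. The resolvent identity $R=R_0-R(A_g+V(n))R_0$, applied to $1$, gives $R(1)=(x(n)+z)^{-1}-R(x_1(n,z))$ by Notation \ref{third resolvent nota}; applied to $x_m(n,z)$ it yields $R(x_m)=(x(n)+z)^{-1}x_m-R(x_{m+1})$. Iterating $d+1$ times,
$$R(1)=\sum_{k=0}^{d}(-1)^k(x(n)+z)^{-1}x_k(n,z)+(-1)^{d+1}{\rm bad}_n(z).$$
A routine induction using the binomial expansion of $V(n)+A_g$ at each step of the recursion in Notation \ref{third resolvent nota} shows $x_k(n,z)=\sum_{\mathscr{A}\subset\{1,\ldots,k\}}x_k^\mathscr{A}(n,z)$. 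Reindexing the resulting double sum by $j=2m-|\mathscr{A}|$ (so $j/2\le m\le\min(j,d)$ and $0\le j\le 2d$) converts the main term into $\sum_{j=0}^{2d}{\rm good}_j(n,z)$, proving (i). Claim (ii) is then immediate: $z\mapsto(x(s)+z)^{-1}$ is analytic on $\mathbb{C}\setminus\mathbb{R}_-$ as a $C^\infty$-valued map (positivity of $x(s)$ combined with holomorphic functional calculus), and each $x_m^\mathscr{A}(s,z)$ is built from it via the $z$-independent operations $V(s)$, $A_g$, and left multiplication.

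\emph{Step 3: Estimate of ${\rm bad}_n(z)$.} Since $A_g+V(n)+\lambda_l(x(n))$ is the unitary conjugate of the self-adjoint positive $A_g$, it is itself positive self-adjoint, so $\|R\|_{L_2\to L_2}\le{\rm dist}(z,(-\infty,0])^{-1}$, which equals $|z|^{-1}$ on the imaginary axis. To bound $\|x_{d+1}(n,z)\|_2$, invoke the homogeneity \eqref{xma homogeneity} with $r=(|n|^2+|z|)^{1/2}$: this rescales to parameters $(\hat n,w)=(n/r,z/r^2)$ confined to a compact subset of $\mathbb{R}^d\times(\mathbb{C}\setminus\mathbb{R}_-)$, where uniform bounds on $\|x_{d+1}^\mathscr{A}(\hat n,w)\|_2$ follow from boundedness of multiplication by $(x(\hat n)+w)^{-1}\in C^\infty$ together with continuity of $V(\hat n)$ and $A_g$ between suitable Sobolev spaces. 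The slowest-decaying term ($|\mathscr{A}|=d+1$) dominates and gives $\|x_{d+1}(n,z)\|_2\lesssim(|n|^2+|z|)^{-(d+1)/2}$; combined with the bound on $\|R\|$ this yields (iii). The principal difficulty lies in this last estimate: one must propagate Sobolev-norm bounds carefully through the recursion, since $V(n)$ is first order in $D$ but scales linearly in $|n|$, whereas $A_g$ is zeroth order in $n$ but second order in $D$. The scaling \eqref{xma homogeneity} combined with a compactness argument is the crucial device that turns these heterogeneous contributions into a uniform estimate in $(n,z)$.
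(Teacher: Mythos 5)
Your Steps 1 and 2 reproduce the paper's argument essentially verbatim: Step 1 is Lemma \ref{ag conjugation lemma} (obtained from Fact \ref{basic exponential fact}), and Step 2 is Lemma \ref{splitting algebraic lemma}, where the iterated resolvent identity with $A=\lambda_r(e_n)^*A_g\lambda_r(e_n)$, $B=\lambda_l(x(n))$ gives the decomposition $R(1)=\sum_{m=0}^d(-1)^m(x(n)+z)^{-1}x_m+(-1)^{d+1}{\rm bad}_n(z)$, followed by the same expansion of $x_m$ into $x_m^{\mathscr{A}}$ and the same reindexing by $k=2m-|\mathscr{A}|$. The analyticity claim is also handled the same way.

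Step 3 is where you genuinely diverge from the paper. The paper proves the estimate $\|x_{d+1}^{\mathscr{A}}(s,z)\|_{W^{k,2}}\leq c_{m,k}(g)(|s|^2+|z|)^{\frac12|\mathscr{A}|-m}$ by a double induction (Lemmas \ref{first sobolev lemma}--\ref{third sobolev lemma}): first Lemma \ref{first sobolev lemma} bounds $\|(x(s)+z)^{-1}\|_{W^{k,\infty}}$, then Lemma \ref{second sobolev lemma} propagates one step of the recursion, and Lemma \ref{third sobolev lemma} iterates. Your alternative is to invoke the exact homogeneity \eqref{xma homogeneity} to rescale to $(\hat{n},w)$ with $|\hat{n}|^2+|w|=1$ and then appeal to compactness. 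This is a cleaner packaging of the same information, and it has the pedagogical advantage of making the powers of $|n|^2+|z|$ drop out automatically rather than needing to be tracked through the induction. However, two points deserve care. First, the set $\{|\hat{s}|^2+|w|=1,\ \Re w\geq 0\}$ is \emph{not} a compact subset of $\mathbb{R}^d\times(\mathbb{C}\setminus\mathbb{R}_-)$, because it contains points with $w=0$ (when $|\hat s|=1$); what saves the argument is that $x(\hat s)+w$ remains uniformly bounded below on this set (either $|w|\geq 1/2$, or $x(\hat s)\geq c(g)/2$), so $(x(\hat s)+w)^{-1}$ is uniformly bounded in every $W^{k,\infty}$ there. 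You should state this rather than assert compactness in $\mathbb{R}^d\times(\mathbb{C}\setminus\mathbb{R}_-)$. Second, to conclude that $(\hat n,w)\mapsto x_{d+1}^{\mathscr{A}}(\hat n,w)\in L_2$ is continuous (hence bounded) on the compact set, you still need to track the loss of two Sobolev derivatives per application of $A_g$ and one per application of $V$, starting from $1\in W^{2(d+1),2}$ --- that is, the same bookkeeping that the paper's lemmas carry out, just without the explicit powers of $(|s|^2+|z|)$. So the compactness route removes the need to compute constants but does not entirely bypass the Sobolev machinery. Finally, your observation that $A_g+V(n)+\lambda_l(x(n))+z$ is the unitary conjugate of $A_g+z$, hence $\|R\|\leq{\rm dist}(z,(-\infty,0])^{-1}$, is the same operator-norm estimate the paper uses in \eqref{bad est eq2}; note in passing that the sign condition stated in the theorem ($\Re z\leq 0$) is evidently a typo for $\Re z\geq 0$, which is the half-plane on which both your estimate and the paper's \eqref{bad est eq2} actually hold, and which is the half-plane containing the contour $\Gamma$ used downstream.
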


Functions ${\rm good}_k$ are called good because they have a very concrete representation. Bad terms ${\rm bad}_n$ are, in a certain sense, negligible (see the explanation after Theorem \ref{exponent splitting theorem}).

In Section \ref{exponent splitting section}, we prove the following result.

\begin{thm}\label{exponent splitting theorem} We have
\begin{equation}\label{exponent splitting eq}
(\lambda_r(e_n)^*e^{-tA_g}\lambda_r(e_n))(1)=\sum_{k=0}^{2d}t^{\frac{k}{2}}{\rm Good}_k(nt^{\frac12})+(-1)^{d+1}{\rm Bad}_n(t),
\end{equation}
where
\begin{enumerate}[{\rm (i)}]
\item functions ${\rm Good}_k$ are explicitly defined in Notation \ref{first exponential notation}.
\item we have
$$\|{\rm Bad}_n(t)\|_2=O\Big(\frac{\log(|n|)}{|n|^{d+1}}\Big)$$
uniformly in $t.$
\end{enumerate}
\end{thm}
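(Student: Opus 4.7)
The plan is to derive the exponential splitting \eqref{exponent splitting eq} from the resolvent splitting of Theorem~\ref{resolvent splitting theorem} by integrating against the Cauchy kernel $e^{tz}$ on the imaginary axis. The scalar Laplace-inversion identity $e^{-ta}=\frac{1}{2\pi i}\int_{-i\infty}^{i\infty}e^{tz}(a+z)^{-1}\,dz$ (valid for $a\geq 0$ and $t>0$, easily checked by residues) lifts via functional calculus to
\[
e^{-tA_g} = \frac{1}{2\pi i}\int_{-i\infty}^{i\infty} e^{tz}(A_g+z)^{-1}\,dz.
\]
Applying $\lambda_r(e_n)^{\ast}(\cdot)\lambda_r(e_n)$ to the vector $1$ and substituting \eqref{resolvent splitting eq} gives
\[
(\lambda_r(e_n)^{\ast}e^{-tA_g}\lambda_r(e_n))(1) = \frac{1}{2\pi i}\int_{-i\infty}^{i\infty} e^{tz}\Big(\sum_{k=0}^{2d}{\rm good}_k(n,z)+(-1)^{d+1}{\rm bad}_n(z)\Big)\,dz.
\]

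Next I isolate each good piece. The homogeneity \eqref{goodk homogeneity}, ${\rm good}_k(rs,r^2z)=r^{-k-2}{\rm good}_k(s,z)$, applied with $r=t^{1/2}$ and combined with the substitution $w=tz$ (legitimate as a contour rescaling by the analyticity of ${\rm good}_k(n,\cdot)$ on $\mathbb{C}\setminus\mathbb{R}_-$ from Theorem~\ref{resolvent splitting theorem}(ii)), yields
\[
\frac{1}{2\pi i}\int_{-i\infty}^{i\infty} e^{tz}{\rm good}_k(n,z)\,dz = t^{k/2}\cdot\frac{1}{2\pi i}\int_{-i\infty}^{i\infty} e^{w}{\rm good}_k(t^{1/2}n,w)\,dw = t^{k/2}{\rm Good}_k(t^{1/2}n),
\]
the last equality being Notation~\ref{first exponential notation} after setting $w=i\lambda$ (in the principal-value sense when $k=0$).

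It remains to handle the bad piece. Define
\[
{\rm Bad}_n(t):=\frac{1}{2\pi i}\int_{-i\infty}^{i\infty} e^{tz}{\rm bad}_n(z)\,dz,
\]
so \eqref{exponent splitting eq} holds by construction. On the imaginary axis Theorem~\ref{resolvent splitting theorem}(iii) gives $\|{\rm bad}_n(is)\|_2=O(|s|^{-1}(|n|^2+|s|)^{-(d+1)/2})$, which is not integrable near $s=0$. Using the analyticity of ${\rm bad}_n$ on $\mathbb{C}\setminus(-\infty,0]$, I would deform the segment $\{is:|s|\le 1\}$ to the unit semicircle in the right half-plane; on the remaining imaginary axis $|s|\ge 1$ one estimates
\[
\int_1^\infty\frac{ds}{s(|n|^2+s)^{(d+1)/2}} = O\Big(\frac{\log|n|}{|n|^{d+1}}\Big),
\]
the logarithm arising from the range $1\le s\le|n|^2$; on the semicircle $|e^{tz}|\le e^t$ is uniformly bounded on compact $t$-intervals and the contribution is $O(|n|^{-(d+1)})$ provided $\|{\rm bad}_n(z)\|_2=O(|n|^{-(d+1)})$ there.

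The principal obstacle is exactly this last step: extending $\|{\rm bad}_n(z)\|_2=O(|n|^{-(d+1)})$ to the bounded region $|z|\le 1$, $\Re z\ge 0$, where Theorem~\ref{resolvent splitting theorem}(iii) is not directly available. This should follow by repeating the Sobolev-type scaling analysis of the definition of ${\rm bad}_n$ in Notation~\ref{fifth resolvent nota}, using the bound $\|(A_g+z)^{-1}\|\le 1/|z|$ for $\Re z\ge 0$ (available since $A_g\ge 0$) in place of the imaginary-axis bound, combined with the decay of $x_{d+1}(n,z)$ coming from the $d+1$ factors of $(x(n)+z)^{-1}$ in its recursive definition. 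A minor secondary issue is justifying the Cauchy representation of $e^{-tA_g}$ and the contour manipulations in the unbounded self-adjoint setting with possibly $0\in\sigma(A_g)$; this is routine once one tests on a dense domain of smooth vectors.
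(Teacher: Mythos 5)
Your route is essentially the same as the paper's. The paper writes $e^{-tA_g}=\frac{1}{2\pi i}\,{\rm p.v.}\!\int_{\Gamma}e^{tz}(A_g+z)^{-1}\,dz$ along the contour $\Gamma$ that follows $i\mathbb{R}$ for $|z|\geq 1$ and detours around the origin along the unit circle in $\{\Re z\geq 0\}$, substitutes the resolvent decomposition of Theorem \ref{resolvent splitting theorem}, identifies $\frac{1}{2\pi i}\int_\Gamma e^{tz}{\rm good}_k(n,z)\,dz$ with $t^{k/2}{\rm Good}_k(nt^{1/2})$ via the homogeneity \eqref{goodk homogeneity} and a Cauchy deformation from $\Gamma$ back to $i\mathbb{R}$ (Lemma \ref{2 contour lemma}), then sets ${\rm Bad}_n(t)=\frac{1}{2\pi i}\int_\Gamma e^{tz}{\rm bad}_n(z)\,dz$ and bounds it by splitting $\int_1^\infty$ at $\lambda=|n|^2$ exactly as you do. Your semicircle is the paper's unit-circle arc; the difference is cosmetic.

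What you call the \emph{principal obstacle} is not a genuine gap but a sign typo in the paper's statement of Theorem \ref{resolvent splitting theorem}(iii). The proof given there establishes the bound $\|{\rm bad}_n(z)\|_2=O\bigl(|z|^{-1}(|n|^2+|z|)^{-(d+1)/2}\bigr)$ on the half-plane $\Re z\geq 0$, not $\Re z\leq 0$: the decisive step \eqref{bad est eq2} uses $\|(A_g+z)^{-1}\|\leq|z|^{-1}$ for $\Re z\geq 0$ (valid since $A_g\geq 0$), and the Sobolev estimates on $x_{d+1}(n,z)$ rest on $\|(x(s)+z)^{-1}\|_\infty\leq c(g)(|s|^2+|z|)^{-1}$, which holds for $\Re z\geq 0$ because $x(s)\geq c(g)|s|^2>0$ so $|\lambda+z|\geq\tfrac{1}{\sqrt 2}(\lambda+|z|)$ for every $\lambda$ in the spectrum. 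Thus on the arc $|z|=1$, $\Re z\geq 0$, the bound directly yields $\|{\rm bad}_n(z)\|_2=O\bigl((|n|^2+1)^{-(d+1)/2}\bigr)=O(|n|^{-d-1})$; no repetition of the Sobolev scaling analysis is needed. Once you observe this, your argument is complete and matches the paper's.

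One small caveat on your phrasing of uniformity: the statement's ``uniformly in $t$'' is used (both by you and by the paper) in the sense of $t$ ranging over $(0,1)$, since the estimate $|e^{tz}|\leq e$ on $\Gamma$ requires $t$ bounded; this is the relevant range for $t\downarrow 0$ and is what Theorem \ref{finite expansion thm} consumes.
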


Terms ${\rm Good}_k$ are called good because they have very concrete representation. Bad terms ${\rm Bad}_n(t)$ are negligible in the following sense:
$$\|\sum_{n\in\mathbb{Z}^d}{\rm Bad}_n(t)\|_2=O(1)$$
uniformly in $t.$

Alain Connes suggested to us the method based on the Poisson summation formula which allows to replace sums with integrals (see Proposition 2.27 in \cite{EcksteinIochum}).

\begin{lem}[Connes "little lemma"]\label{cll} If $f$ is a Schwartz function, then
$$\sum_{n\in\mathbb{Z}^d}f(nt)=t^{-d}\int_{\mathbb{R}^d}f(s)ds+O(t^{\infty}),\quad t>0.$$
\end{lem}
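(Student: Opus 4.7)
The plan is to apply the Poisson summation formula to a dilation of $f$ and then to use rapid decay of $\hat f$ to bound the non-zero frequencies. Setting $g(x)=f(tx)$ for $t>0$, the standard scaling rule gives $\hat g(\xi)=t^{-d}\hat f(\xi/t)$, so Poisson summation (valid since $g$ is Schwartz) yields the identity
$$\sum_{n\in\mathbb{Z}^d}f(nt)=\sum_{n\in\mathbb{Z}^d}g(n)=\sum_{m\in\mathbb{Z}^d}\hat g(m)=t^{-d}\sum_{m\in\mathbb{Z}^d}\hat f(m/t).$$
The $m=0$ term equals $t^{-d}\hat f(0)=t^{-d}\int_{\mathbb{R}^d}f(s)\,ds$, which is precisely the main term in the claimed asymptotic. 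What remains is to show that the sum over $m\neq 0$ is $O(t^{\infty})$ as $t\downarrow 0$.

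For this I use that $f$ Schwartz implies $\hat f$ Schwartz, so for every $N\in\mathbb{N}$ there is a constant $C_N$ with $|\hat f(\xi)|\le C_N(1+|\xi|)^{-N}$. For $0<t\le 1$ and $m\neq 0$ this gives $|\hat f(m/t)|\le C_N(1+|m|/t)^{-N}\le C_N t^{N}|m|^{-N}$. Choosing any $N>d$, the series $\sum_{m\neq 0}|m|^{-N}$ converges, so
$$\Bigl|t^{-d}\sum_{m\neq 0}\hat f(m/t)\Bigr|\le C_N't^{\,N-d}.$$
Since $N$ is arbitrary, this is $O(t^{\infty})$, completing the proof. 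I do not see any genuine obstacle in this argument; the only points that require a little care are the legitimacy of Poisson summation (immediate for Schwartz functions) and the uniformity of the Schwartz estimates in the scaling parameter, both of which are routine.
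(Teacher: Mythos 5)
Your proof is correct and follows essentially the same route as the paper: Poisson summation applied to the dilated function, with the zero frequency producing the integral term and rapid decay of $\hat f$ controlling the nonzero frequencies. The only cosmetic difference is that you derive the scaling identity explicitly via $g(x)=f(tx)$, whereas the paper writes the scaled Poisson formula directly.
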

\begin{proof} By Poisson summation formula, we have
$$\sum_{n\in\mathbb{Z}^d}f(nt)=t^{-d}\sum_{n\in\mathbb{Z}^d}(\mathcal{F}f)(nt^{-1}).$$
Here,
$$(\mathcal{F}f)(s)=\int_{\mathbb{R}^d}f(u)e^{-2\pi i\langle u,s\rangle}du.$$
Note that $f$ is a Schwartz function and so is $\mathcal{F}f.$ For every $m,$ we have $(\mathcal{F}f)(s)=O(|s|^{-m}).$ Thus,
$$\sum_{0\neq n\in\mathbb{Z}^d}(\mathcal{F}f)(nt^{-1})=\sum_{0\neq n\in\mathbb{Z}^d}O(\frac{t^m}{|n|^m})=O(t^m),\quad m>d.$$
Hence,
$$\sum_{n\in\mathbb{Z}^d}f(nt)=t^{-d}\int_{\mathbb{R}^d}f(s)ds+O(t^{m-d}),\quad t>0.$$
Since $m$ is arbitrarily large, the assertion follows.
\end{proof}

For vector-valued functions, the notion analogous to that of Schwartz function does not exist (see though a substitute in Appendix C in \cite{PongeHa}). However, the following adjustment of Connes "little lemma" is possible (and proved in Section \ref{poisson section}).

\begin{thm}\label{sobolev connes lemma} For every $f\in W^{p,1}(\mathbb{R}^d,X),$ $p>d,$ we have
$$\Big\|\sum_{n\in\mathbb{Z}^d}f(tn)-t^{-d}\int_{\mathbb{R}^d}f(u)du\Big\|_X=O(t^{p-d}),\quad t>0.$$
\end{thm}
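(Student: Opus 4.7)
The plan is to adapt the scalar proof of Lemma \ref{cll} by replacing the Schwartz hypothesis with the Sobolev hypothesis, and by working coordinate-wise (via $X$-valued Bochner integrals) in place of scalar quantities.

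First I would establish the vector-valued Poisson summation formula
$$\sum_{n\in\mathbb{Z}^d}f(tn)=t^{-d}\sum_{n\in\mathbb{Z}^d}(\mathcal{F}f)(nt^{-1}),$$
with both sums convergent in $X$. Under the assumption $f\in W^{p,1}(\mathbb{R}^d,X)$ with $p>d$, the function $f$ is (Bochner-)continuous with integrable modulus (after modification on a null set), and so is $\mathcal{F}f$, so the classical Poisson formula applies pointwise to the scalar functional $u\mapsto \langle x^{\ast},f(u)\rangle_{X,X^{\ast}}$ for every $x^{\ast}\in X^{\ast}$; since both sides define elements of $X$ with equal action on every $x^{\ast}$, Hahn-Banach promotes this to the vector-valued identity.

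Next I would estimate the decay of $\mathcal{F}f$. The standard identity $\mathcal{F}(D^{\alpha}f)(s)=(2\pi i s)^{\alpha}(\mathcal{F}f)(s)$, justified for Bochner-integrable $f$ and $D^{\alpha}f$, gives
$$|s_j|^{p}\,\|(\mathcal{F}f)(s)\|_{X}\leq (2\pi)^{-p}\|D_j^{p}f\|_{L_1(\mathbb{R}^d,X)},\qquad 1\leq j\leq d,$$
and summing over $j$ yields $\|(\mathcal{F}f)(s)\|_{X}\leq C_{f}\,(1+|s|)^{-p}$ for some constant $C_{f}$ depending only on $\|f\|_{W^{p,1}}$.

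Finally, separating the $n=0$ term, which equals $t^{-d}(\mathcal{F}f)(0)=t^{-d}\int_{\mathbb{R}^d}f(u)du$, I would estimate the remaining sum by
$$\Bigl\|t^{-d}\sum_{0\neq n\in\mathbb{Z}^d}(\mathcal{F}f)(nt^{-1})\Bigr\|_{X}\leq C_f\,t^{-d}\sum_{0\neq n\in\mathbb{Z}^d}(1+|n|t^{-1})^{-p}\leq C_f'\,t^{p-d}\sum_{0\neq n\in\mathbb{Z}^d}|n|^{-p},$$
valid for $0<t\leq 1$; the last sum converges precisely because $p>d$, producing the claimed $O(t^{p-d})$ bound. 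For $t\geq 1$ one gets an even better estimate by trivial bounds.

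The main (and only genuinely delicate) point will be the first step, namely writing down a clean and self-contained justification of Poisson summation in the Bochner-valued category under the bare Sobolev hypothesis $f\in W^{p,1}(\mathbb{R}^d,X)$; once that is in place, the subsequent steps are a straightforward repetition of the scalar argument of Lemma \ref{cll} with the decay rate $|s|^{-m}$ (arbitrary) replaced by the fixed rate $|s|^{-p}$.
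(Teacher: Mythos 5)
Your proposal follows essentially the same route as the paper: rescale, apply a vector-valued Poisson summation formula, peel off the $n=0$ term, and bound the tail using the decay $\|(\mathcal{F}f)(s)\|_X=O(|s|^{-p})$; the final summation step and the decay estimate are correct.

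However, there is a genuine gap in the first step, and it is exactly the one you flag as ``delicate.'' You assert that because $f$ and $\mathcal{F}f$ are Bochner-continuous with integrable modulus, ``the classical Poisson formula applies pointwise'' to each scalar functional $u\mapsto\langle x^{\ast},f(u)\rangle$. Continuity plus integrability of a function and its Fourier transform is \emph{not} a sufficient hypothesis for Poisson summation (there are well-known counterexamples); one must additionally know that both series $\sum_{n}\|f(n)\|_X$ and $\sum_{n}\|(\mathcal{F}f)(n)\|_X$ converge absolutely (equivalently, that $f$ and $\mathcal{F}f$ lie in $l_1(L_\infty)$ over the lattice). This absolute summability is also needed \emph{before} the Hahn--Banach step, since you must first know both sides define elements of $X.$ The paper supplies exactly this ingredient in Lemmas~\ref{poisson third lemma} and~\ref{poisson fourth lemma}, which establish $W^{d,1}(\mathbb{R}^d,X)\subset l_1(L_\infty)(\mathbb{R}^d,X)$ and $\mathcal{F}(W^{p,1}(\mathbb{R}^d,X))\subset l_1(L_\infty)(\mathbb{R}^d,X)$ for $p>d,$ and then uses boundedness of the two resulting maps $T,S:W^{p,1}(\mathbb{R}^d,X)\to X$ together with density of vector-valued Schwartz functions (on which scalar Poisson is taken for granted) to conclude $T=S.$ To close your argument you should either reproduce these $l_1(L_\infty)$ embeddings and then run the density argument, or prove a scalar Poisson theorem under a $W^{p,1}$ hypothesis directly and apply it to $l_{x^\ast}$ --- either way, the work you are eliding is precisely the content of the paper's Section~\ref{poisson section}.
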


In Section \ref{verification section}, we verify the conditions of Theorem \ref{sobolev connes lemma} for $f={\rm Good}_k$ and infer the following intermediate result.

\begin{thm}\label{finite expansion thm} Let $d\geq 2.$ For every $x\in L_2(\mathbb{T}^d_{\theta}),$ we have
$$\Big|{\rm Tr}(\lambda_l(x)e^{-tA_g})-\sum_{\substack{0\leq k<d\\ k=0{\rm mod}2}}t^{\frac{k-d}{2}}\cdot\tau(xI_k)\Big|=O(1).$$
\end{thm}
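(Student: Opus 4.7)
The plan is to combine the three main ingredients just established. From Lemma \ref{first strategy lemma} we have
$$
{\rm Tr}(\lambda_l(x)e^{-tA_g}) = \tau(x\cdot F(t)), \qquad F(t) = \sum_{n\in\mathbb{Z}^d}(\lambda_r(e_n)^*e^{-tA_g}\lambda_r(e_n))(1),
$$
and Theorem \ref{exponent splitting theorem} decomposes each summand to give
$$
F(t) = \sum_{k=0}^{2d} t^{k/2} \sum_{n\in\mathbb{Z}^d} {\rm Good}_k(nt^{1/2}) + (-1)^{d+1}\sum_{n\in\mathbb{Z}^d}{\rm Bad}_n(t).
$$
The $L_2$-norm bound for $\sum_n{\rm Bad}_n(t)$ stated after Theorem \ref{exponent splitting theorem}, combined with $x\in L_2(\mathbb{T}^d_{\theta})$ and Cauchy--Schwarz for $\tau$, already absorbs the bad contribution into $O(1)$.

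For each good term, the task is to analyse $\sum_{n\in\mathbb{Z}^d}{\rm Good}_k(nt^{1/2})$. I would apply Theorem \ref{sobolev connes lemma} to the $L_2(\mathbb{T}^d_{\theta})$-valued function ${\rm Good}_k$ with scale parameter $t^{1/2}$, obtaining
$$
\sum_{n\in\mathbb{Z}^d}{\rm Good}_k(nt^{1/2}) = t^{-d/2}\int_{\mathbb{R}^d}{\rm Good}_k(u)\,du + O(t^{(p-d)/2})
$$
for any $p$ for which ${\rm Good}_k\in W^{p,1}(\mathbb{R}^d,L_2(\mathbb{T}^d_{\theta}))$. Establishing this Sobolev regularity with $p$ arbitrarily large --- controlling the contour integral in Notation \ref{first exponential notation} along with sufficiently many $s$-derivatives of ${\rm good}_k(s,i\lambda)$, using the analyticity statement Theorem \ref{resolvent splitting theorem}\eqref{rspb} together with explicit growth bounds on the building blocks $x_m^{\mathscr{A}}(s,z)$ --- is the main technical obstacle, and is exactly what the preceding portion of Section \ref{verification section} carries out. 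Since ${\rm Good}_k={\rm Corr}_k$ for $k\leq d$ by the remark after Notation \ref{first exponential notation}, the integral on the right equals $I_k$ in that range.

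It then remains to reassemble. Multiplying by $t^{k/2}$, the $k$-th good term contributes
$$
t^{(k-d)/2}I_k + O(t^{(k+p-d)/2}),
$$
and one splits cases on $k$. For $k<d$ even, this is precisely the leading contribution $t^{(k-d)/2}I_k$ appearing in the stated expansion, with remainder $O(1)$ once $p\geq d-k$. For $k\geq d$, the leading $t^{(k-d)/2}I_k$ is itself bounded as $t\downarrow 0$, and the remainder is again $O(1)$. For $k$ odd, the substitution $s\mapsto -s$ in Notation \ref{first resolvent nota} shows that $V(s)$ is odd and $x(s)$ even, so $x_m^{\mathscr{A}}(-s,z)=(-1)^{|\mathscr{A}|}x_m^{\mathscr{A}}(s,z)=(-1)^k x_m^{\mathscr{A}}(s,z)$; hence ${\rm Corr}_k$ and ${\rm Good}_k$ are odd functions of $s$, so $I_k=0$ and the lattice sum $\sum_n{\rm Good}_k(nt^{1/2})$ cancels pairwise, again giving $O(1)$. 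Pairing all the pieces with $x$ through $\tau$ and applying Cauchy--Schwarz in $L_2(\mathbb{T}^d_{\theta})$ to absorb the $L_2$-remainders delivers the claimed estimate.
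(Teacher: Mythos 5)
Your proposal is correct and follows essentially the same route as the paper: reduce to $\tau(x\cdot F(t))$ via Lemma \ref{first strategy lemma}, split $F(t)$ into good and bad parts by Theorem \ref{exponent splitting theorem}, control $\sum_n{\rm Bad}_n(t)$ in $L_2$ and dispose of it by Cauchy--Schwarz, convert the good lattice sums to integrals via Theorem \ref{sobolev connes lemma} (after the Sobolev regularity of ${\rm Good}_k$ established in Lemma \ref{main verification lemma}), and then discard $k\geq d$ as bounded and odd $k$ by the parity $x_m^{\mathscr{A}}(-s,z)=(-1)^{|\mathscr{A}|}x_m^{\mathscr{A}}(s,z)$. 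The only cosmetic slip is labelling the $k\geq d$ contribution as $t^{(k-d)/2}I_k$ rather than $t^{(k-d)/2}\int{\rm Good}_k$ (those differ once $k>d$ since ${\rm Good}_k\neq{\rm Corr}_k$), but since those terms are simply bounded this does not affect the argument.
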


\begin{rem} It is tempting to say that, taking more terms in Theorem \ref{resolvent splitting theorem}, we should obtain an asymptotic for ${\rm Tr}(\lambda_l(x)e^{-tA_g})$ modulo $O(t^N)$ for large $N$ (rather than asymptotic modulo $O(t^0)$). However, proving this does not seem straightforward. In our proof of Theorem \ref{main result}, we use tensoring trick instead. 
\end{rem}

\section{Splitting theorem for resolvent in arbitrary dimension}\label{resolvent splitting section}

\begin{lem}\label{ag conjugation lemma} For every $n\in\mathbb{Z}^d,$ we have
$$\lambda_r(e_n)^*A_g\lambda_r(e_n)=\lambda_l(x(n))+A_g+V(n).$$
\end{lem}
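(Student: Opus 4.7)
The plan is to reduce everything to the commutation relations collected in Fact \ref{basic exponential fact}, namely that $\lambda_r(e_n)$ commutes with each $\lambda_l(x)$ and that $\lambda_r(e_n)^{\ast}D_i\lambda_r(e_n)=D_i+n_i$ (the latter follows by rewriting $D_i\lambda_r(e_n)=\lambda_r(e_n)D_i+n_i\lambda_r(e_n)$ and using unitarity of $\lambda_r(e_n)$). Since conjugation by $\lambda_r(e_n)$ is an algebra homomorphism, I would simply conjugate the explicit expression \eqref{ag expression} for $A_g$ factor by factor: the outer $\lambda_l(\nu^{\pm\frac12})$'s and the middle $\lambda_l(\nu^{\frac12}(g^{-1})_{ij}\nu^{\frac12})$ are untouched, while each $D_i$ becomes $D_i+n_i$. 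This gives
\[
\lambda_r(e_n)^{\ast}A_g\lambda_r(e_n)=\lambda_l(\nu^{-\frac12})\sum_{i,j=1}^d(D_i+n_i)\lambda_l(\nu^{\frac12}(g^{-1})_{ij}\nu^{\frac12})(D_j+n_j)\lambda_l(\nu^{-\frac12}).
\]

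Then I would expand the product $(D_i+n_i)(\cdots)(D_j+n_j)$ and identify the four resulting pieces. The pure $D_iD_j$ piece reproduces $A_g$ verbatim. The pure $n_in_j$ piece collapses, using $\lambda_l(\nu^{-\frac12})\lambda_l(\nu^{\frac12}(g^{-1})_{ij}\nu^{\frac12})\lambda_l(\nu^{-\frac12})=\lambda_l((g^{-1})_{ij})$, to $\sum_{i,j}n_in_j\lambda_l((g^{-1})_{ij})=\lambda_l(x(n))$ by Notation \ref{second resolvent nota}.

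For the two cross terms I would simplify analogously. The $n_i$ cross term contracts $\lambda_l(\nu^{-\frac12})\lambda_l(\nu^{\frac12}(g^{-1})_{ij}\nu^{\frac12})$ on the left to $\lambda_l((g^{-1})_{ij}\nu^{\frac12})$ and yields
\[
\sum_{i,j=1}^d n_i\,\lambda_l((g^{-1})_{ij}\nu^{\frac12})D_j\lambda_l(\nu^{-\frac12}),
\]
which is exactly the first half of $V(n)=\sum_i n_iA_i$. For the $n_j$ cross term, contracting $\lambda_l(\nu^{\frac12}(g^{-1})_{ij}\nu^{\frac12})\lambda_l(\nu^{-\frac12})$ on the right to $\lambda_l(\nu^{\frac12}(g^{-1})_{ij})$ and relabelling $i\leftrightarrow j$ produces the second half $\sum_{i,j}n_i\lambda_l(\nu^{-\frac12})D_j\lambda_l(\nu^{\frac12}(g^{-1})_{ji})$ of $V(n)$. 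Summing the four pieces gives $A_g+\lambda_l(x(n))+V(n)$, as required.

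There is no real obstacle here; the whole argument is a direct identification of terms. The only things to be careful about are (i) keeping the order of the non-commuting factors $\nu^{\pm\frac12}$ and $(g^{-1})_{ij}$ correct when collapsing adjacent $\lambda_l$'s via $\lambda_l(a)\lambda_l(b)=\lambda_l(ab)$, and (ii) matching the index convention of Notation \ref{first resolvent nota} after relabelling the summation variables in the second cross term. Both are bookkeeping, not conceptual, difficulties.
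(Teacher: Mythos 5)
Your proof is correct and follows the same route as the paper: conjugate the explicit expression \eqref{ag expression} for $A_g$ by $\lambda_r(e_n)$ using Fact \ref{basic exponential fact} to turn each $D_i$ into $D_i+n_i$, then expand the quadratic and match the four resulting pieces to $A_g$, $\lambda_l(x(n))$, and the two halves of $V(n)$. You are somewhat more explicit than the paper in showing that the cross terms contract to the two sums defining $V(n)$ (including the $i\leftrightarrow j$ relabelling), which the paper leaves to the reader, but the argument is the same.
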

\begin{proof} It follows from Fact \ref{basic exponential fact} that
$$\lambda_l(\nu^{-\frac12})D_i\lambda_l(\nu^{\frac12}(g^{-1})_{ij}\nu^{\frac12})D_j\lambda_l(\nu^{-\frac12})\cdot\lambda_r(e_n)=$$
$$=\lambda_r(e_n)\cdot \lambda_l(\nu^{-\frac12})(D_i+n_i)\lambda_l(\nu^{\frac12}(g^{-1})_{ij}\nu^{\frac12})(D_j+n_j)\lambda_l(\nu^{-\frac12}).$$
Hence,
$$\lambda_r(e_n)^*A_g\lambda_r(e_n)=A_g+\lambda_l(\sum_{i,j=1}^dn_in_j(g^{-1})_{ij})+$$
$$+\sum_{i,j=1}^d\lambda_l(\nu^{-\frac12})n_i\lambda_l(\nu^{\frac12}(g^{-1})_{ij}\nu^{\frac12})D_j\lambda_l(\nu^{-\frac12})+$$
$$+\sum_{i,j=1}^d\lambda_l(\nu^{-\frac12})D_i\lambda_l(\nu^{\frac12}(g^{-1})_{ij}\nu^{\frac12})n_j\lambda_l(\nu^{-\frac12}).$$
\end{proof}

\begin{lem}\label{splitting algebraic lemma} For every $z\in\mathbb{C}\backslash\mathbb{R}_-,$ we have
$$\Big(\lambda_r(e_n)^*\frac1{A_g+z}\lambda_r(e_n)\Big)(1)=\sum_{k=0}^{2d}{\rm good}_k(n,z)+(-1)^{d+1}{\rm bad}_n(z).$$
\end{lem}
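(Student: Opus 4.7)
The plan is to iterate a one-sided resolvent identity applied to the constant $1$, truncate after $d+1$ steps, and recognize the terms as the quantities in Notation \ref{fifth resolvent nota}.

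First, I would use Lemma \ref{ag conjugation lemma} together with unitarity of $\lambda_r(e_n)$. Setting $B := A_g + V(n)$ and $C := \lambda_l(x(n)) + z$, this lemma gives
$$\lambda_r(e_n)^*(A_g+z)^{-1}\lambda_r(e_n)=\bigl(\lambda_r(e_n)^*(A_g+z)\lambda_r(e_n)\bigr)^{-1} = (B+C)^{-1}=:R.$$
Both $R$ and $C^{-1}$ are bounded: $A_g$ is positive self-adjoint and $z\in\mathbb{C}\setminus\mathbb{R}_-$ (with $z\neq0$), while $x(n)=\sum_{i,j}(g^{-1})_{ij}n_in_j$ is positive, so $\lambda_l(x(n))+z$ is invertible with inverse equal to left multiplication by $(x(n)+z)^{-1}\in C^\infty(\mathbb{T}^d_\theta)$.

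From $R(B+C)=I$ I derive the right-sided resolvent identity $R=C^{-1}-RBC^{-1}$. Applied to $y\in C^\infty(\mathbb{T}^d_\theta)$ this reads $R(y)=(x(n)+z)^{-1}y-R\bigl((V(n)+A_g)((x(n)+z)^{-1}y)\bigr)$. Recognizing the defining recursion of $x_m(n,z)$ in Notation \ref{third resolvent nota}, one gets
$$R(x_m(n,z))=(x(n)+z)^{-1}x_m(n,z)-R(x_{m+1}(n,z)),\quad m\geq 0,\ x_0(n,z)=1.$$
A trivial induction on $k$ then yields
$$R(1)=\sum_{m=0}^{k-1}(-1)^m(x(n)+z)^{-1}x_m(n,z)+(-1)^kR(x_k(n,z)).$$
Specializing $k=d+1$, the remainder is exactly $(-1)^{d+1}{\rm bad}_n(z)$ by Notation \ref{fifth resolvent nota}. (Choosing this right-sided identity rather than $R=C^{-1}-C^{-1}BR$ is the key point, since only this version puts $R$ \emph{outside} the iterate and matches the definition of ${\rm bad}_n$.)

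The remaining step is to rewrite the finite sum $\sum_{m=0}^{d}(-1)^m(x(n)+z)^{-1}x_m(n,z)$ as $\sum_{k=0}^{2d}{\rm good}_k(n,z)$. Expanding the $m$-fold iteration of $V(n)+A_g$ by distributivity gives
$$x_m(n,z)=\sum_{\mathscr{A}\subset\{1,\ldots,m\}}x_m^{\mathscr{A}}(n,z),$$
where $\mathscr{A}$ records the steps at which $V(n)$ was selected over $A_g$; this matches Notation \ref{fourth resolvent nota} by induction on $m$. Substituting and reindexing by $k:=2m-|\mathscr{A}|$, the pairs $(m,\mathscr{A})$ with $0\leq m\leq d$ and $\mathscr{A}\subset\{1,\ldots,m\}$ correspond bijectively to $k\in\{0,\ldots,2d\}$ together with $\lceil k/2\rceil\leq m\leq\min(k,d)$ and $\mathscr{A}\subset\{1,\ldots,m\}$ of size $2m-k$. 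This is precisely the range prescribed by ${\rm good}_k$.

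There is no substantial obstacle here; the proof is bookkeeping. The only points requiring mild care are (i) checking that every $x_m(n,z)$ lies in a space on which $R$ and $(x(n)+z)^{-1}$ act (guaranteed since $\nu,(g^{-1})_{ij}\in C^\infty(\mathbb{T}^d_\theta)$ makes $V(n),A_g$ preserve $C^\infty(\mathbb{T}^d_\theta)$), and (ii) verifying the range of $k$ in the reindexing, as above.
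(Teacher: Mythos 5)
Your proof is correct and follows essentially the same route as the paper: both arguments use Lemma \ref{ag conjugation lemma} to conjugate $(A_g+z)^{-1}$ by $\lambda_r(e_n)$, then iterate the (right-sided) resolvent identity $d+1$ times with $B+z=\lambda_l(x(n))+z$ as the reference resolvent, producing the remainder ${\rm bad}_n(z)$, and finally expand $x_m$ over subsets $\mathscr{A}$ and reindex by $k=2m-|\mathscr{A}|$. The only cosmetic difference is that you iterate pointwise via the recursion $R(x_m)=(x(n)+z)^{-1}x_m-R(x_{m+1})$, while the paper writes the iterated identity once as an operator equation and then applies it to the vector $1$.
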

\begin{proof} Iterating the resolvent identity, we obtain
$$\frac1{A+z}=\sum_{m=0}^d(-1)^m\frac1{B+z}\cdot \Big((A-B)\frac1{B+z}\Big)^m+\frac{(-1)^{d+1}}{A+z}\cdot \Big((A-B)\frac1{B+z}\Big)^{d+1}.$$

Now, we set $A=\lambda_r(e_n)^*A_g\lambda_r(e_n)$ and $B=\lambda_l(x(n))$ and apply both sides to the vector $1.$ By Lemma \ref{ag conjugation lemma}, we have
$$A-B=V(n)+A_g.$$
Using the equality
$$\Big((A-B)\frac1{B+z}\Big)^m(1)=\Big((V(n)+A_g)\lambda_l((x(n)+z)^{-1})\Big)^m(1)=x_m(n,z),$$
we obtain
\begin{equation}\label{splitting algebraic eq1}
\Big(\frac1{A+z}\Big)(1)=\sum_{m=0}^d(-1)^m(x(n)+z)^{-1}x_m(n,z)+(-1)^{d+1}{\rm bad}_n(z).
\end{equation}

Finally, we have
$$x_m(n,z)=\sum_{\mathscr{A}\subset\{1,\cdots,m\}}x_m^{\mathscr{A}}(n,z).$$
Thus,
$$\sum_{m=0}^d(-1)^m(x(n)+z)^{-1}x_m(n,z)=\sum_{m=0}^d(-1)^m(x(n)+z)^{-1}\sum_{\mathscr{A}\subset\{1,\cdots,m\}}x_m^{\mathscr{A}}(n,z).$$
Obviously,
$$\sum_{\mathscr{A}\subset\{1,\cdots,m\}}x_m^{\mathscr{A}}(n,z)=\sum_{k=m}^{2m}\sum_{\substack{\mathscr{A}\subset\{1,\cdots,m\}\\ |\mathscr{A}|=2m-k}}x_m^{\mathscr{A}}(n,z).$$
Therefore,
$$\sum_{m=0}^d(-1)^m(x(n)+z)^{-1}x_m(n,z)=$$
$$=\sum_{m=0}^d\sum_{k=m}^{2m}(-1)^m(x(n)+z)^{-1}\sum_{\substack{\mathscr{A}\subset\{1,\cdots,m\}\\ |\mathscr{A}|=2m-k}}x_m^{\mathscr{A}}(n,z)$$
$$=\sum_{k=0}^{2d}\sum_{\frac{k}{2}\leq m\leq \min(k,d)}(-1)^m(x(n)+z)^{-1}\sum_{\substack{\mathscr{A}\subset\{1,\cdots,m\}\\ |\mathscr{A}|=2m-k}}x_m^{\mathscr{A}}(n,z)=\sum_{k=0}^{2d}{\rm good}_k(n,z).$$
Combining the last equality with \eqref{splitting algebraic eq1}, we complete the proof.
\end{proof}

In the following lemmas, $c_k(g)$ are some constants (they may differ in different lemmas) which depend only on $k$ and the metric $g.$ Their precise values are irrelevant.

\begin{lem}\label{first sobolev lemma} For every $k\geq0,$ we have
$$\|(x(s)+z)^{-1}\|_{W^{k,\infty}}\leq\frac{c_k(g)}{|s|^2+|z|},\quad\Re(z)\leq 0,\quad s\in\mathbb{R}^d.$$
\end{lem}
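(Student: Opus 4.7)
The plan is induction on $k$. The case $k=0$ is a $C^*$-algebraic resolvent bound that uses positivity of $g^{-1}$; the step $k\to k+1$ follows from iterating the Leibniz rule for the derivations $D_i$.

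For the base case, the key observation is that $g$ is a positive invertible element of $M_d(C^{\infty}(\mathbb{T}^d_{\theta}))$, hence $g^{-1}$ is a positive invertible element of the $C^*$-algebra $M_d(L_{\infty}(\mathbb{T}^d_{\theta}))$, and therefore $g^{-1}\geq c_g\cdot\mathrm{Id}_{M_d}\otimes 1$ for some $c_g>0$. Evaluating the quadratic form $x(s)=\sum_{i,j}(g^{-1})_{ij}s_is_j$ then gives $x(s)\geq c_g|s|^2\cdot 1$ in $L_{\infty}(\mathbb{T}^d_{\theta})$. Continuous functional calculus for the self-adjoint element $x(s)$ yields
$$\|(x(s)+z)^{-1}\|_{\infty}\leq\sup_{\lambda\geq c_g|s|^2}\frac{1}{|\lambda+z|},$$
and an elementary case analysis (splitting on whether $|\Re z|\leq\tfrac{c_g}{2}|s|^2$ or not) shows the right-hand side is dominated by $c_0(g)(|s|^2+|z|)^{-1}$.

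For the inductive step I would use that each $D_i$ is a derivation on $C^{\infty}(\mathbb{T}^d_{\theta})$, so
$$D_i[(x(s)+z)^{-1}]=-(x(s)+z)^{-1}\cdot D_i(x(s))\cdot(x(s)+z)^{-1}.$$
Iterating expresses $D^{\alpha}[(x(s)+z)^{-1}]$ as a finite linear combination of products of the form
$$\pm(x(s)+z)^{-1}D^{\beta_1}(x(s))(x(s)+z)^{-1}\cdots D^{\beta_m}(x(s))(x(s)+z)^{-1}$$
with $1\leq m\leq|\alpha|_1$, $|\beta_j|_1\geq 1$ and $|\beta_1|_1+\cdots+|\beta_m|_1=|\alpha|_1$. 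Each factor $D^{\beta}(x(s))=\sum_{i,j}D^{\beta}((g^{-1})_{ij})s_is_j$ satisfies $\|D^{\beta}(x(s))\|_{\infty}\leq c_{\beta}(g)|s|^2$, so combining with the base case bounds every such term in $L_{\infty}$-norm by
$$c\cdot\frac{|s|^{2m}}{(|s|^2+|z|)^{m+1}}\leq\frac{c}{|s|^2+|z|}.$$
Summing over the finitely many multi-indices $\alpha$ with $|\alpha|_1\leq k$ gives the claimed $W^{k,\infty}$-bound.

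I expect the only genuine obstacle to be the scalar resolvent estimate in the base case, where one must verify that the condition $\Re z\leq 0$ (together with the standing assumption $z\in\mathbb{C}\setminus\mathbb{R}_-$ implicit from the surrounding notations) suffices to keep $-z$ quantitatively separated from $\mathrm{spec}(x(s))\subset[c_g|s|^2,\infty)$ in the form $|\lambda+z|\geq c(|s|^2+|z|)$ on that spectrum. Once this is settled, the inductive step is routine Leibniz bookkeeping combined with the telescoping inequality $|s|^{2m}/(|s|^2+|z|)^m\leq 1$.
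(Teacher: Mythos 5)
Your proof is correct, and your flagged concern is warranted. The condition $\Re(z)\leq 0$ as printed does not support the estimate: for $\Re z<0$ with $-z$ approaching $[c_g|s|^2,\infty)$ one has $\sup_{\lambda\geq c_g|s|^2}|\lambda+z|^{-1}$ blowing up while $|s|^2+|z|$ stays bounded. The hypothesis that makes the bound true, and the one actually used, is $\Re(z)\geq 0$: then for $\lambda\geq 0$ one has $|\lambda+z|\geq\max(\lambda,|z|)\geq\tfrac12(\lambda+|z|)$, and your spectral-calculus argument immediately gives $\|(x(s)+z)^{-1}\|_\infty\leq 2\min(1,c_g)^{-1}(|s|^2+|z|)^{-1}$. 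This matches all applications in the paper (the contour $\Gamma$ and the line $1+i\mathbb{R}$ lie in the closed right half-plane, and the proof of Theorem \ref{resolvent splitting theorem} invokes $\Re(z)\geq 0$ explicitly in \eqref{bad est eq2}); the sign printed here and again in Theorem \ref{resolvent splitting theorem}\eqref{rspc} is a typo.

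Your inductive step takes a genuinely different route from the paper's, though the two are equivalent after unwinding. The paper's step is a ``vertical'' induction: it combines the derivative identity $D_j((x(s)+z)^{-1})=-(x(s)+z)^{-1}D_j(x(s))(x(s)+z)^{-1}$ with the submultiplicativity $\|xy\|_{W^{k,\infty}}\leq 2^k\|x\|_{W^{k,\infty}}\|y\|_{W^{k,\infty}}$ and the inductive $W^{k,\infty}$-bound to control the $W^{k+1,\infty}$-norm, so the inductive hypothesis is used as a norm estimate rather than re-expanded. You instead expand $D^{\alpha}$ of the resolvent into monomials $(x(s)+z)^{-1}D^{\beta_1}(x(s))(x(s)+z)^{-1}\cdots D^{\beta_m}(x(s))(x(s)+z)^{-1}$ and estimate each in $L_\infty$ directly. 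This costs more bookkeeping (tracking $m$ and the multi-indices $\beta_j$) but is exactly the style the paper adopts later in Lemmas \ref{first monomial estimate}--\ref{third monomial estimate}, and both routes close with the same telescoping $|s|^{2m}(|s|^2+|z|)^{-m}\leq 1$. The paper's packaging via the Banach-algebra structure of $W^{k,\infty}$ is shorter; your version is more explicit about what the derivatives actually produce.
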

\begin{proof} We prove the assertion by induction on $k.$ For $k=0,$ we have $W^{0,\infty}=L_{\infty}$ and the assertion is obvious. Suppose, it is true for $k$ and let us prove it for $k+1.$

By definition \eqref{sobolev norm def}, we have
$$\|x\|_{W^{k+1,p}}\leq\|x\|_{W^{k,p}}+\sum_{j=1}^d\|D_jx\|_{W^{k,p}}.$$
Thus,
$$\|(x(s)+z)^{-1}\|_{W^{k+1,\infty}}\leq \|(x(s)+z)^{-1}\|_{W^{k,\infty}}+\sum_{j=1}^d\|D_j((x(s)+z)^{-1})\|_{W^{k,\infty}}.$$

Clearly,
$$D_j((x(s)+z)^{-1})=-(x(s)+z)^{-1}D_j(x(s))(x(s)+z)^{-1}.$$
Using the inequality
$$\|xy\|_{W^{k,\infty}}\leq 2^k\|x\|_{W^{k,\infty}}\|y\|_{W^{k,\infty}},$$
we arrive at
$$\|D_j((x(s)+z)^{-1})\|_{W^{k,\infty}}\leq 2^{2k}\Big\|(x(s)+z)^{-1}\Big\|_{W^{k,\infty}}^2\cdot \Big\|D_j(x(s))\Big\|_{W^{k,\infty}}.$$
Obviously,
$$\Big\|D_j(x(s))\Big\|_{W^{k,\infty}}\leq\|x(s)\|_{W^{k+1,\infty}}\leq |s|^2\cdot\sum_{i,j=1}^d\|(g^{-1})_{ij}\|_{W^{k+1,\infty}}.$$
Using the inductive assumption, we obtain
$$\|D_j((x(s)+z)^{-1})\|_{W^{k,\infty}}\leq\frac{2^{2k}c_k^2(g)|s|^2}{(|s|^2+|z|)^2}\cdot \sum_{i,j=1}^d\|(g^{-1})_{ij}\|_{W^{k+1,\infty}}=\frac{c_k'(g)|s|^2}{(|s|^2+|z|)^2}.$$
Hence,
$$\|(x(s)+z)^{-1}\|_{W^{k+1,\infty}}\leq\frac{c_k(g)}{|s|^2+|z|}+\frac{dc_k'(g)|s|^2}{(|s|^2+|z|)^2}\leq \frac{c_k(g)+dc_k'(g)}{|s|^2+|z|}.$$
\end{proof}

\begin{lem}\label{second sobolev lemma} For every $(m,k)\geq0,$ we have
$$\|x_m^{\mathscr{A}}(s,z)\|_{W^{k,2}}\leq\frac{c_k(g)}{(|s|^2+|z|)^{\frac12}}\|x_{m-1}^{\mathscr{A}}(s,z)\|_{W^{k+1,2}},\quad m\in\mathscr{A},$$
$$\|x_m^{\mathscr{A}}(s,z)\|_{W^{k,2}}\leq\frac{c_k(g)}{|s|^2+|z|}\|x_{m-1}^{\mathscr{A}}(s,z)\|_{W^{k+2,2}},\quad m\notin\mathscr{A}.$$
\end{lem}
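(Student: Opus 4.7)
The plan is to apply the defining recursions from Notation \ref{fourth resolvent nota} directly and then bound the resulting expressions using three standard ingredients: (i) the Leibniz inequality $\|ab\|_{W^{k,2}} \leq c_k\|a\|_{W^{k,\infty}}\|b\|_{W^{k,2}}$ for Sobolev spaces on $\mathbb{T}^d_\theta$, (ii) the observation that $V(s)$ is a first-order differential operator linear in $s$ and $A_g$ is a second-order differential operator with coefficients in $C^{\infty}(\mathbb{T}^d_\theta)$, and (iii) the resolvent bound of Lemma \ref{first sobolev lemma}.

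For the case $m\in\mathscr{A}$, I expand $V(s) = \sum_i s_i A_i$, where each $A_i$ is a first-order operator whose coefficients are fixed elements of $C^{\infty}(\mathbb{T}^d_\theta)$ (built from $\nu$ and $g^{-1}$). Using the chain of bounds
$$\|V(s)y\|_{W^{k,2}} \leq |s|\cdot c_k(g)\,\|y\|_{W^{k+1,2}},$$
followed by the Leibniz inequality and Lemma \ref{first sobolev lemma},
$$\|(x(s)+z)^{-1}x_{m-1}^{\mathscr{A}}(s,z)\|_{W^{k+1,2}} \leq c_{k+1}\,\|(x(s)+z)^{-1}\|_{W^{k+1,\infty}}\,\|x_{m-1}^{\mathscr{A}}(s,z)\|_{W^{k+1,2}} \leq \frac{c_{k+1}(g)}{|s|^2+|z|}\,\|x_{m-1}^{\mathscr{A}}(s,z)\|_{W^{k+1,2}},$$
and combining these with the elementary inequality $|s|/(|s|^2+|z|) \leq (|s|^2+|z|)^{-1/2}$, I obtain the first asserted bound.

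For the case $m\notin\mathscr{A}$, the strategy is identical except that $A_g$ is a second-order operator (inspect formula \eqref{ag expression}: it is a sum of terms of the form $\lambda_l(\cdot)D_i\lambda_l(\cdot)D_j\lambda_l(\cdot)$). Thus
$$\|A_g y\|_{W^{k,2}} \leq c_k(g)\,\|y\|_{W^{k+2,2}},$$
and the Leibniz step together with Lemma \ref{first sobolev lemma} gives
$$\|(x(s)+z)^{-1}x_{m-1}^{\mathscr{A}}(s,z)\|_{W^{k+2,2}} \leq \frac{c_{k+2}(g)}{|s|^2+|z|}\,\|x_{m-1}^{\mathscr{A}}(s,z)\|_{W^{k+2,2}},$$
yielding the second asserted bound. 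The mapping property $\|A_g y\|_{W^{k,2}} \lesssim \|y\|_{W^{k+2,2}}$ itself follows from expanding $A_g$ via the Leibniz rule for the $D_j$'s, with all $\lambda_l(\cdot)$ factors contributing bounded multiplication operators on each $W^{j,2}$ thanks to the smoothness of $\nu^{\pm 1/2}$ and $(g^{-1})_{ij}$.

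The only step requiring any care is the Leibniz inequality for Sobolev spaces on the non-commutative torus, but this is standard and already implicitly used in Lemma \ref{first sobolev lemma}. I anticipate no serious obstacle; the proof is essentially a bookkeeping exercise that correctly tracks the orders (first versus second) of $V(s)$ and $A_g$ against the matching loss of one versus two Sobolev derivatives.
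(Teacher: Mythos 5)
Your proposal is correct and follows essentially the same path as the paper's proof: apply the defining recursion, bound $V(s)$ (resp.\ $A_g$) as a first-order (resp.\ second-order) operator to shift the Sobolev index, use the Leibniz estimate for Sobolev norms of products together with Lemma~\ref{first sobolev lemma}, and close with the elementary inequality $|s|/(|s|^2+|z|)\le(|s|^2+|z|)^{-1/2}$. The only difference is cosmetic: you compress into one line what the paper spells out via the triangle inequality applied to $V(s)=\sum_i s_i A_i$.
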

\begin{proof} Consider the case $m\in\mathscr{A}.$ By definition, we have
$$\|x_m^{\mathscr{A}}(s,z)\|_{W^{k,2}}=\|(V(s))((x(s)+z)^{-1}x_{m-1}^{\mathscr{A}}(s,z))\|_{W^{k,2}}.$$
By triangle inequality, we have
$$\|x_m^{\mathscr{A}}(s,z)\|_{W^{k,2}}\leq\sum_{i=1}^d|s_i|\cdot\|A_i((x(s)+z)^{-1}x_{m-1}^{\mathscr{A}}(s,z))\|_{W^{k,2}}.$$
Using obvious inequality
$$\|A_ix\|_{W^{k,2}}\leq c_k'(g)\cdot \|x\|_{W^{k+1,2}},\quad 1\leq i\leq d,$$
we obtain
$$\|x_m^{\mathscr{A}}(s,z)\|_{W^{k,2}}\leq dc_k'(g)|s|\cdot \|(x(s)+z)^{-1}x_{m-1}^{\mathscr{A}}(s,z)\|_{W^{k+1,2}}.$$

Using the inequality
$$\|xy\|_{W^{k+1,2}}\leq 2^{k+1}\|x\|_{W^{k+1,\infty}}\|y\|_{W^{k+1,2}},$$
we arrive at
$$\|x_m^{\mathscr{A}}(s,z)\|_{W^{k,2}}\leq 2^{k+1}dc_k'(g)|s|\cdot\|(x(s)+z)^{-1}\|_{W^{k+1,\infty}}\|x_{m-1}^{\mathscr{A}}(s,z)\|_{W^{k+1,2}}.$$
The assertion for the case $m\in\mathscr{A}$ follows now from Lemma \ref{first sobolev lemma}.

Consider the case $m\notin\mathscr{A}.$ By definition, we have
$$\|x_m^{\mathscr{A}}(s,z)\|_{W^{k,2}}=\|A_g((x(s)+z)^{-1}x_{m-1}^{\mathscr{A}}(s,z))\|_{W^{k,2}}.$$
Using obvious inequality
$$\|A_gx\|_{W^{k,2}}\leq c_k'(g)\|x\|_{W^{k+2,2}},$$
we obtain
$$\|x_m^{\mathscr{A}}(s,z)\|_{W^{k,2}}\leq c_k'(g)\|(x(s)+z)^{-1}x_{m-1}^{\mathscr{A}}(s,z)\|_{W^{k+2,2}}.$$
Using the inequality
$$\|xy\|_{W^{k+2,2}}\leq 2^{k+2}\|x\|_{W^{k+2,\infty}}\|y\|_{W^{k+2,2}},$$
we arrive at
$$\|x_m^{\mathscr{A}}(s,z)\|_{W^{k,2}}\leq 2^{k+2}c_k'(g)\|(x(s)+z)^{-1}\|_{W^{k+2,\infty}}\|x_{m-1}^{\mathscr{A}}(s,z)\|_{W^{k+2,2}}.$$
The assertion for the case $m\notin\mathscr{A}$ follows now from Lemma \ref{first sobolev lemma}.
\end{proof}

\begin{lem}\label{third sobolev lemma} For every $(m,k)\geq0$ and for every $\mathscr{A}\subset\{1,\cdots,m\},$ we have
$$\|x_m^{\mathscr{A}}(s,z)\|_{W^{k,2}}\leq\frac{c_{m,k}(g)}{(|s|^2+|z|)^{m-\frac12|\mathscr{A}|}}.$$
\end{lem}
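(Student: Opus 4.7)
The plan is a straightforward induction on $m$, chaining the recursive estimate of Lemma \ref{second sobolev lemma}.

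For the base case $m = 0$, we must have $\mathscr{A} = \emptyset$, and $x_0^{\mathscr{A}}(s,z) = 1$. Since $D_i(1) = 0$ for every $i$, every Sobolev norm $\|1\|_{W^{k,2}}$ is a finite constant, so the required bound reduces to the trivial estimate $\|1\|_{W^{k,2}} \leq c_{0,k}(g)$ (with $c_{0,k}(g)$ chosen appropriately).

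For the inductive step, fix $m \geq 1$ and $\mathscr{A} \subset \{1,\ldots,m\}$, and note that by its definition $x_{m-1}^{\mathscr{A}}$ depends only on $\mathscr{A}' := \mathscr{A} \cap \{1,\ldots,m-1\}$. The inductive hypothesis applied with parameters $(m-1, k+j)$ therefore yields
$$\|x_{m-1}^{\mathscr{A}}(s,z)\|_{W^{k+j,2}} \leq \frac{c_{m-1,k+j}(g)}{(|s|^2+|z|)^{(m-1) - |\mathscr{A}'|/2}}.$$
Two cases arise. If $m \in \mathscr{A}$, then $|\mathscr{A}'| = |\mathscr{A}|-1$, and the first bound in Lemma \ref{second sobolev lemma} (applied with $j=1$) supplies an extra factor of $c_k(g)(|s|^2+|z|)^{-1/2}$, producing exponent $(m-1) - (|\mathscr{A}|-1)/2 + 1/2 = m - |\mathscr{A}|/2$. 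If $m \notin \mathscr{A}$, then $|\mathscr{A}'| = |\mathscr{A}|$, and the second bound (applied with $j=2$) supplies an extra factor of $c_k(g)(|s|^2+|z|)^{-1}$, producing exponent $(m-1) - |\mathscr{A}|/2 + 1 = m - |\mathscr{A}|/2$. Either way the desired bound holds, with $c_{m,k}(g)$ chosen to absorb $c_k(g) \cdot c_{m-1,k+j}(g)$.

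There is no genuine obstacle in this argument; it is a direct telescoping of the recursion of Lemma \ref{second sobolev lemma}. The only point requiring care is the bookkeeping of exponents of $(|s|^2+|z|)^{-1}$ accumulated along the chain: each step through an index in $\mathscr{A}$ contributes $1/2$ to the exponent, while each step through an index outside $\mathscr{A}$ contributes $1$, so the total after $m$ steps is $|\mathscr{A}|/2 + (m-|\mathscr{A}|) = m - |\mathscr{A}|/2$, exactly as claimed. A minor side observation is that the Sobolev index $k$ is inflated at each step (by $1$ or $2$), but this is harmless because the induction is carried out with $k$ allowed to be arbitrary, and the constants $c_{m-1,k+j}(g)$ at the next level are still finite.
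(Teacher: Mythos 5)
Your proof is correct and follows essentially the same inductive argument as the paper: base case $m=0$ via $x_0^\varnothing=1$, then one step of Lemma \ref{second sobolev lemma} combined with the inductive hypothesis for $\mathscr{A}\setminus\{m\}$ (your $\mathscr{A}'$, the paper's $\mathscr{B}$), with the two cases $m\in\mathscr{A}$ and $m\notin\mathscr{A}$ contributing exponents $1/2$ and $1$ respectively. The only cosmetic difference is that you remark $\|1\|_{W^{k,2}}$ is finite, whereas the paper simply notes it equals $1$.
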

\begin{proof} The assertion follows by induction on $m.$ For $m=0,$ we have that $\mathscr{A}=\varnothing$ and, hence, $|\mathscr{A}|=0.$ It is immediate that
$$\|x_0^{\varnothing}(s,z)\|_{W^{k,2}}=\|1\|_{W^{k,2}}=1.$$
This establishes base of induction. 

We now establish the step of induction. Suppose the assertion is true for $m-1,$ for every subset of $\{1,\cdots,m-1\}$ and for {\it every} $k.$ Let $\mathscr{B}=\mathscr{A}\backslash\{m\}\subset\{1,\cdots,m-1\}.$ If $m\in\mathscr{A},$ then Lemma \ref{second sobolev lemma} asserts that
 $$\|x_m^{\mathscr{A}}(s,z)\|_{W^{k,2}}\leq\frac{c_k(g)}{(|s|^2+|z|)^{\frac12}}\|x_{m-1}^{\mathscr{A}}(s,z)\|_{W^{k+1,2}}=\frac{c_k(g)}{(|s|^2+|z|)^{\frac12}}\|x_{m-1}^{\mathscr{B}}(s,z)\|_{W^{k+1,2}}.$$
Applying inductive assumption for the set $\mathscr{B},$ we obtain
 $$\|x_m^{\mathscr{A}}(s,z)\|_{W^{k,2}}\leq\frac{c_k(g)}{(|s|^2+|z|)^{\frac12}}\cdot\frac{c_{m-1,k+1}(g)}{(|s|^2+|z|)^{(m-1)-\frac12|\mathscr{B}|}}=\frac{c_k(g)c_{m-1,k+1}(g)}{(|s|^2+|z|)^{m-\frac12|\mathscr{A}|}}.$$
If $m\notin\mathscr{A},$ then Lemma \ref{second sobolev lemma} asserts that
 $$\|x_m^{\mathscr{A}}(s,z)\|_{W^{k,2}}\leq\frac{c_k(g)}{|s|^2+|z|}\|x_{m-1}^{\mathscr{A}}(s,z)\|_{W^{k+2,2}}=\frac{c_k(g)}{|s|^2+|z|}\|x_{m-1}^{\mathscr{B}}(s,z)\|_{W^{k+2,2}}.$$
Applying inductive assumption for the set $\mathscr{B},$ we obtain
 $$\|x_m^{\mathscr{A}}(s,z)\|_{W^{k,2}}\leq\frac{c_k(g)}{|s|^2+|z|}\cdot\frac{c_{m-1,k+2}(g)}{(|s|^2+|z|)^{(m-1)-\frac12|\mathscr{B}|}}=\frac{c_k(g)c_{m-1,k+2}(g)}{(|s|^2+|z|)^{m-\frac12|\mathscr{A}|}}.$$
This establishes step of induction.
\end{proof}

\begin{proof}[Proof of Theorem \ref{resolvent splitting theorem}] Firstly, the equality \eqref{resolvent splitting eq} is established in Lemma \ref{splitting algebraic lemma}. The assertion of Theorem \ref{resolvent splitting theorem} \eqref{rspa} does not require any proof. The assertion of Theorem \ref{resolvent splitting theorem} \eqref{rspb} is immediate from the definition of the term ${\rm good}_k$ (see Notation \ref{fifth resolvent nota}).

It remains to show the assertion of Theorem \ref{resolvent splitting theorem} \eqref{rspc}. By definition (see Notation \ref{fifth resolvent nota}), we have
$${\rm bad}_n(z)=\Big(\lambda_r(e_n)^*\frac1{A_g+z}\lambda_r(e_n)\Big)(x_{d+1}(n,z)).$$
Therefore,
\begin{equation}\label{bad est eq1}
\|{\rm bad}_n(z)\|_2\leq\Big\|\lambda_r(e_n)^*\frac1{A_g+z}\lambda_r(e_n)\Big\|_{L_2\to L_2}\cdot\|x_{d+1}(n,z)\|_2.
\end{equation}

Since $A_g\geq0,$ it follows that
\begin{equation}\label{bad est eq2}
\Big\|\lambda_r(e_n)^*\frac1{A_g+z}\lambda_r(e_n)\Big\|_{L_2\to L_2}=\Big\|\frac1{A_g+z}\Big\|_{L_2\to L_2}\leq |z|^{-1},\quad \Re(z)\geq0.
\end{equation}

On the other hand, it follows from Lemma \ref{third sobolev lemma} (with $k=0$) that
$$\|x_{d+1}(n,z)\|_2\leq\sum_{\mathscr{A}\subset\{1,\cdots,d+1\}}\|x_{d+1}^{\mathscr{A}}(n,z)\|_2\leq c(g)\sum_{\mathscr{A}\subset\{1,\cdots,d+1\}}(|n|^2+|z|)^{\frac12|\mathscr{A}|-d-1}.$$
Since $|\mathscr{A}|\leq d+1$ and $n\neq0,$ it follows that
$$(|n|^2+|z|)^{\frac12|\mathscr{A}|-d-1}\leq\frac{1}{(|n|^2+|z|)^{\frac{d+1}{2}}}.$$
Thus,
\begin{equation}\label{bad est eq3}
\|x_{d+1}(n,z)\|_2\leq \frac{2^{d+1}c(g)}{(|n|^2+|z|)^{\frac{d+1}{2}}}.
\end{equation}
Combining \eqref{bad est eq1}, \eqref{bad est eq2} and \eqref{bad est eq3}, we arrive at
$$\|{\rm bad}_n(z)\|_2\leq \frac{2^{d+1}c(g)}{|z|\cdot(|n|^2+|z|)^{\frac{d+1}{2}}}.$$
This completes the proof of Theorem \ref{resolvent splitting theorem} \eqref{rspc}.
\end{proof}

\section{Splitting theorem for exponential in arbitrary dimension}\label{exponent splitting section}

In this section, $\Gamma$ denotes the contour passing from $-i\infty$ to $i\infty$ as follows: along the line $\{\Re(z)=0\}$ from $-i\infty$ to $-i,$ then along the circle $\{|z|=1\}$ in the counter-clock-wise direction from $-i$ to $i,$ then along the line $\{\Re(z)=0\}$ from $i$ to $i\infty.$ This contour is introduced with a single purpose: to avoid the origin in the integration. However, in the statement of Theorem \ref{exponent splitting theorem}, we use Notation \ref{first exponential notation}, where the integration is taken over the line $\{\Re z=0\}.$ This allows us to employ homogeneity of the function ${\rm good}_k$ (as in \eqref{goodk homogeneity}) and to write the respective integral as ${\rm Good}_k.$

\begin{lem}\label{2 contour lemma} For $0\neq s\in\mathbb{R}^d$ and for every $t>0,$ we have
$$t^{\frac{k}{2}}{\rm Good}_k(st^{\frac12})=\frac1{2\pi i}\int_{\Gamma}{\rm good}_k(s,z)e^{tz}dz.$$
\end{lem}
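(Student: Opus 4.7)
The plan is to derive the identity from the definition of ${\rm Good}_k$ in Notation \ref{first exponential notation}, the homogeneity \eqref{goodk homogeneity} of ${\rm good}_k$, and a contour deformation justified by the analyticity assertion in Theorem \ref{resolvent splitting theorem}\eqref{rspb}.

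First I would rescale using homogeneity. Applying \eqref{goodk homogeneity} with dilation $r=t^{1/2}$ (so $r^2=t$) and with the spectral parameter chosen so that $r^2z=i\lambda$, i.e. $z=i\lambda/t$, yields
$${\rm good}_k(st^{1/2},i\lambda)=t^{-(k+2)/2}\,{\rm good}_k(s,i\lambda/t).$$
Substituting this into the definition of ${\rm Good}_k(st^{1/2})$ and performing the change of variables $\mu=\lambda/t$ (with $d\lambda=t\,d\mu$) gives
$$t^{k/2}{\rm Good}_k(st^{1/2})=\frac{1}{2\pi}\int_{\mathbb{R}}{\rm good}_k(s,i\mu)\,e^{it\mu}\,d\mu=\frac{1}{2\pi i}\int_{i\mathbb{R}}{\rm good}_k(s,z)\,e^{tz}\,dz,$$
where $z=i\mu$. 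For $k\geq 1$ the integrals converge absolutely (from the $|\mu|^{-k/2-1}$ decay implicit in Lemma \ref{third sobolev lemma}); for $k=0$ the integral is a principal value at $\pm\infty$, and the change of variables preserves this interpretation since the limits of integration are rescaled symmetrically.

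It remains to replace $i\mathbb{R}$ by the contour $\Gamma$. By construction the two paths agree outside the closed unit disk and differ only on $[-i,i]$, which $\Gamma$ replaces with the right semicircle $\{|z|=1,\ \Re z\geq 0\}$, both oriented from $-i$ to $i$. By Theorem \ref{resolvent splitting theorem}\eqref{rspb}, ${\rm good}_k(s,\cdot)$ is analytic on $\mathbb{C}\setminus\mathbb{R}_-$, and $e^{tz}$ is entire, so the integrand is analytic on the closed right half-disk $\{|z|\leq 1,\ \Re z\geq 0\}$, which lies in $\mathbb{C}\setminus\mathbb{R}_-$. Cauchy's theorem on the closed contour bounding this half-disk identifies the two pieces; since only a bounded portion of the integration path is modified, the (possibly conditional) convergence at $\pm i\infty$ is unaffected.

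The argument is essentially mechanical, with no real obstacle; the only check worth flagging is that the region swept out between $i\mathbb{R}$ and $\Gamma$ avoids the negative real axis (which is immediate since it is contained in the closed right half-plane), so that the analyticity from Theorem \ref{resolvent splitting theorem}\eqref{rspb} applies and Cauchy's theorem is legitimate.
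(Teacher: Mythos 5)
Your proposal follows essentially the same route as the paper's proof: rescale by homogeneity, change variables to land on the line $i\mathbb{R}$, and then deform that line to $\Gamma$ by Cauchy's theorem applied to the closed contour bounding the right half-disk of radius~$1$.

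There is, however, one imprecision that deserves to be flagged. You justify the deformation by citing Theorem~\ref{resolvent splitting theorem}\eqref{rspb}, which gives analyticity of ${\rm good}_k(s,\cdot)$ on $\mathbb{C}\setminus\mathbb{R}_-$, and you then claim that the closed right half-disk $\{|z|\leq 1,\ \Re z\geq 0\}$ lies in $\mathbb{C}\setminus\mathbb{R}_-$. But the origin belongs to that half-disk and to $\mathbb{R}_-$ (with the paper's convention $\mathbb{R}_- = (-\infty,0]$, which is forced by the appearance of $(x(s)+z)^{-1}$ at $s=0$), so the cited statement does not by itself license Cauchy's theorem on a contour that passes through $z=0$. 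What actually saves the argument — and what the paper invokes at this point — is the quantitative positivity $x(s)\geq c(g)|s|^2>0$ for $s\neq 0$, which shows that ${\rm good}_k(s,\cdot)$ extends analytically to the larger region $\mathbb{C}\setminus(-\infty,-c(g)|s|^2]$, comfortably containing a neighbourhood of the origin and hence the whole half-disk. You should replace the appeal to Theorem~\ref{resolvent splitting theorem}\eqref{rspb} with this sharper observation; otherwise the contour-deformation step, which runs through $z=0$, is not actually justified by what you cite.
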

\begin{proof} By definition of ${\rm Good}_k$ (see Notation \ref{first exponential notation}), we have
$$t^{\frac{k}{2}}{\rm Good}_k(st^{\frac12})=\frac{t^{\frac{k}{2}}}{2\pi}\int_{\mathbb{R}}{\rm good}_k(t^{\frac12}s,i\lambda)e^{i\lambda}d\lambda.$$
By the homogeneity of ${\rm good}_k,$ we have
$$t^{\frac{k}{2}}{\rm Good}_k(st^{\frac12})=\frac{t^{-1}}{2\pi}\int_{\mathbb{R}}{\rm good}_k(s,it^{-1}\lambda)e^{i\lambda}d\lambda.$$
Changing the variable $\lambda=t\mu,$ we obtain
$$t^{\frac{k}{2}}{\rm Good}_k(st^{\frac12})=\frac{1}{2\pi}\int_{\mathbb{R}}{\rm good}_k(s,i\mu)e^{it\mu}d\mu=\frac1{2\pi i}\int_{i\mathbb{R}}{\rm good}_k(s,z)e^{tz}dz.$$
Thus,
$$t^{\frac{k}{2}}{\rm Good}_k(st^{\frac12})-\frac1{2\pi i}\int_{\Gamma}{\rm good}_k(s,z)e^{tz}dz=\frac1{2\pi i}\int_{\Gamma_1}{\rm good}_k(s,z)e^{tz}dz,$$
where the closed contour $\Gamma_1$ goes from $z=-i$ to $z=i$ along the line $\{\Re(z)=0\},$ then from $z=i$ to $z=-i$ along the circle $\{|z|=1\}$ clockwise.

Note that $x(s)\geq c(g)|s|^2$ for every $s\in\mathbb{R}^d.$ Hence, ${\rm good}_k(s,\cdot)$ is analytic in $\mathbb{C}\backslash(-\infty,-c(g)|s|^2].$ Since $\Gamma_1$ is a closed contour lying inside $\mathbb{C}\backslash(-\infty,-c(g)|s|^2],$ it follows from Cauchy theorem that
$$\int_{\Gamma_1}{\rm good}_k(s,z)e^{tz}dz=0.$$
This completes the proof.
\end{proof}

\begin{proof}[Proof of Theorem \ref{exponent splitting theorem}] For every $y>0$ and $t\geq0,$ we have
$$e^{-ty}=\frac1{2\pi i}{\rm p.v.}\int_{i\mathbb{R}}\frac{e^{tz}dz}{y+z}.$$
Here, principal value is needed because the integral is not absolutely convergent at infinity. Hence, for every $y,t\geq0$ we have
$$e^{-ty}=\frac1{2\pi i}{\rm p.v.}\int_{\Gamma}\frac{e^{tz}dz}{y+z}.$$
By the functional calculus, we have
$$e^{-tA}=\frac1{2\pi i}{\rm p.v.}\int_{\Gamma}\frac{e^{tz}dz}{A+z}.$$

Therefore,
$$(\lambda_r(e_n)^*e^{-tA_g}\lambda_r(e_n))(1)=\frac1{2\pi i}{\rm p.v.}\int_{\Gamma}(\lambda_r(e_n)^*\frac1{A_g+z}\lambda_r(e_n))(1)e^{tz}dz.$$

By Lemma \ref{2 contour lemma}, we have
$$t^{\frac{k}{2}}{\rm Good}_k(st^{\frac12})=\frac1{2\pi i}\int_{\Gamma}{\rm good}_k(s,z)e^{tz}dz.$$
Setting
$${\rm Bad}_n(t)=\frac1{2\pi i}\int_{\Gamma}{\rm bad}_n(z)e^{tz}dz,$$
we infer from Theorem \ref{resolvent splitting theorem} that
$$(\lambda_r(e_n)^*e^{-tA_g}\lambda_r(e_n))(1)=\sum_{k=0}^{2d}t^{\frac{k}{2}}{\rm Good}_k(nt^{\frac12})+(-1)^{d+1}{\rm Bad}_n(t).$$

Obviously,
$$\|{\rm Bad}_n(t)\|_2\leq\frac1{2\pi}\int_{\Gamma}\|{\rm bad}_n(z)\|_2\cdot |e^{tz}|\cdot |dz|.$$
By Theorem \ref{resolvent splitting theorem}, we have
$$\|{\rm bad}_n(z)\|_2\leq\frac{c(g)}{|z|\cdot (|n|^2+|z|)^{\frac{d+1}{2}}}$$
for {\it some} constant $c(g),$ which only depends on $g$ and not on $n.$ On $\Gamma,$ we have $|e^{tz}|\leq e^t\leq e$ as $t\in(0,1).$ Therefore,
$$\|{\rm Bad}_n(t)\|_2\leq c(g)\cdot\int_{\Gamma}\frac{|dz|}{|z|\cdot (|n|^2+|z|)^{\frac{d+1}{2}}}.$$

Clearly,
$$\int_{\Gamma}\frac{|dz|}{|z|\cdot (|n|^2+|z|)^{\frac{d+1}{2}}}=\frac{\pi}{(|n|^2+1)^{\frac{d+1}{2}}}+2\int_1^{\infty}\frac{d\lambda}{\lambda\cdot (|n|^2+\lambda)^{\frac{d+1}{2}}}.$$
Furthermore,
$$\int_1^{|n|^2}\frac{d\lambda}{\lambda\cdot (|n|^2+\lambda)^{\frac{d+1}{2}}}\leq \int_1^{|n|^2}\frac{d\lambda}{\lambda}\cdot |n|^{-d-1}=\frac{2\log(|n|)}{|n|^{d+1}}$$
and
$$\int_{|n|^2}^{\infty}\frac{d\lambda}{\lambda\cdot (|n|^2+\lambda)^{\frac{d+1}{2}}}\leq \int_{|n|^2}^{\infty}\frac{d\lambda}{\lambda^{\frac{d+3}{2}}}=\frac{2}{d+1}|n|^{-d-1}.$$
Combining these inequalities, we obtain
$$\|{\rm Bad}_n(t)\|_2=O\Big(\frac{\log(|n|)}{|n|^{d+1}}\Big),$$
as desired.
\end{proof}

\section{Poisson summation formula for vector-valued functions}\label{poisson section}

\begin{thm}\label{poisson thm} Let $X$ be a Banach space. Poisson summation formula holds for every $f\in W^{p,1}(\mathbb{R}^d,X),$ $p>d,$ i.e.
$$\sum_{n\in\mathbb{Z}^d}f(n)=\sum_{n\in\mathbb{Z}^d}(\mathcal{F}f)(n).$$ 
\end{thm}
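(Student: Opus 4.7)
I would adapt the classical periodization argument, with a Banach-space-valued Sobolev embedding replacing the rapid decay used in Connes' little lemma (Lemma \ref{cll}).

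First, applying $\|(\mathcal{F}g)(\xi)\|_X\leq\|g\|_{L^1(\mathbb{R}^d,X)}$ to each $D^\alpha f$ with $|\alpha|\leq p$, together with $\mathcal{F}(D^\alpha f)(\xi)=(2\pi i\xi)^\alpha(\mathcal{F}f)(\xi)$, I obtain the decay
$$\|(\mathcal{F}f)(\xi)\|_X\leq C_f\,(1+|\xi|)^{-p}.$$
Since $p>d$, the series $\sum_n(\mathcal{F}f)(n)$ converges absolutely, so the periodic function
$$G(x)=\sum_{n\in\mathbb{Z}^d}(\mathcal{F}f)(n)\,e^{2\pi i\langle n,x\rangle}$$
is continuous on $\mathbb{T}^d$.

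Next, set $F(x)=\sum_n f(x+n)$. Since $f\in L^1(\mathbb{R}^d,X)$, Fubini gives $F\in L^1(\mathbb{T}^d,X)$ with the series converging absolutely for almost every $x$, and a direct unfolding computes
$$\hat F(n)=\int_{[0,1]^d}F(x)e^{-2\pi i\langle n,x\rangle}dx=\int_{\mathbb{R}^d}f(y)e^{-2\pi i\langle n,y\rangle}dy=(\mathcal{F}f)(n).$$
Thus $F$ and $G$ carry identical Fourier coefficients; pairing with continuous linear functionals $\ell\in X^*$ and applying scalar uniqueness of Fourier coefficients in $L^1(\mathbb{T}^d)$ forces $F=G$ almost everywhere.

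The remaining task is to identify $F(0)=G(0)$ with $\sum_n f(n)$. Applying the argument above to each $D^\alpha f$, $|\alpha|\leq p$, shows $F\in W^{p,1}(\mathbb{T}^d,X)$; in particular $F$ is continuous since $p>d$. For the pointwise identification I would invoke a local vector-valued Morrey embedding
$$\|h\|_{L^\infty(Q_m,X)}\leq C\,\|h\|_{W^{p,1}(Q'_m,X)},\qquad p>d,$$
on unit cubes $Q_m=m+[0,1]^d$ with slight enlargements $Q'_m$ (the scalar proof via the fundamental theorem of calculus along coordinate segments transfers verbatim, since it only uses the triangle inequality in $X$). Summing over $m$ and exploiting bounded overlap of $\{Q'_m\}$ yields
$$\sum_{m\in\mathbb{Z}^d}\|f(m)\|_X\leq C\,\|f\|_{W^{p,1}(\mathbb{R}^d,X)},$$
so $\sum_n f(n)$ converges absolutely, and the same estimate applied to tails $\sum_{|n|>N}f(\cdot+n)$ shows the periodization series converges uniformly on a neighbourhood of $0$. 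Hence $F(0)=\sum_n f(n)$, and comparing with $G(0)=\sum_n(\mathcal{F}f)(n)$ gives the Poisson formula.

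The main obstacle I anticipate is the clean formulation and proof of the vector-valued local Morrey embedding with constants independent of $m$; every other step is a formal consequence of Fubini and of the $L^1$-boundedness of $\mathcal{F}$, both of which hold identically in any Banach space.
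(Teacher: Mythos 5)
Your proof is correct, but it takes a genuinely different route from the paper's. The paper proves that both $f\mapsto\sum_{n}f(n)$ and $f\mapsto\sum_{n}(\mathcal{F}f)(n)$ define bounded linear maps from $W^{p,1}(\mathbb{R}^d,X)$ to $X$ (via Lemmas \ref{poisson first lemma}--\ref{poisson fourth lemma}, which give exactly the $W^{d,1}\subset BWC$ and $l_1(L_\infty)$ estimates you also need), verifies they agree on vector-valued Schwartz functions by pairing with $\ell\in X^*$ and invoking the scalar Poisson formula, and concludes by density of Schwartz functions. You instead run the classical periodization argument: form $F(x)=\sum_n f(x+n)$, compute its Fourier coefficients to be $(\mathcal{F}f)(n)$ by unfolding, identify $F$ with the continuous Fourier series $G$ via uniqueness of Fourier coefficients in $L^1(\mathbb{T}^d,X)$, and then pin down the pointwise value $F(0)=\sum_n f(n)$ using uniform convergence of the tails. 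Both approaches rest on the same underlying quantitative ingredients (decay of $\mathcal{F}f$ from $\|\mathcal{F}(D^\alpha f)\|_\infty\le\|D^\alpha f\|_1$, and the Morrey-type bound $\sum_m\|f\|_{L^\infty(Q_m,X)}\lesssim\|f\|_{W^{p,1}}$). The paper's route is ``softer'': once the two bounded operators are in hand, the density argument is one line, at the cost of taking scalar Poisson as a black box. Your route is more self-contained --- you essentially re-derive Poisson via periodization and Fourier uniqueness on the torus --- but requires slightly more care in the pointwise identification at $0$ (the uniform convergence of $\sum_{|n|>N}f(\cdot+n)$ near the origin, which you correctly flag). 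One small remark: you do not actually need the detour through $F\in W^{p,1}(\mathbb{T}^d,X)$ to get continuity of $F$, since $F=G$ a.e. with $G$ continuous already singles out the continuous representative; and the $X^*$-pairing uniqueness argument can be made cleaner by noting that Bochner-integrable functions are essentially separably valued (or simply by using Fej\'er means directly in $L^1(\mathbb{T}^d,X)$), which avoids any hidden separability hypothesis on $X$.
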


In what follows,
$$(\mathcal{F}f)(s)=\int_{\mathbb{R}^d}f(u)e^{-2\pi i\langle u,s\rangle}du,\quad s\in\mathbb{R}^d.$$
$$(T_sf)(u)=f(u-s),\quad u,s\in\mathbb{R}^d.$$

In what follows, $BWC(\mathbb{R}^d,X)$ denotes the space of bounded weak$^{\ast}$ continuous $X-$valued functions on $\mathbb{R}^d.$

\begin{lem}\label{poisson first lemma} We have $W^{d,1}(\mathbb{R}^d,X)\subset BWC(\mathbb{R}^d,X).$ Moreover, we have
\begin{equation}\label{uniform norm ineq}
\|f\|_{L_{\infty}(\mathbb{R}^d,X)}\leq\|f\|_{W^{d,1}(\mathbb{R}^d,X)},\quad f\in W^{d,1}(\mathbb{R}^d).
\end{equation}
\end{lem}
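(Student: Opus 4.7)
The plan is to prove this as a vector-valued Sobolev embedding via the classical iterated fundamental theorem of calculus. First I would reduce to the dense subspace $C_c^{\infty}(\mathbb{R}^d,X)$: the standard mollification argument (convolving with a scalar-valued mollifier) carries over verbatim to the $X$-valued setting, so $C_c^{\infty}(\mathbb{R}^d,X)$ is norm-dense in $W^{d,1}(\mathbb{R}^d,X)$.

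For $f\in C_c^{\infty}(\mathbb{R}^d,X)$, the pointwise representation
$$f(s)=\int_{-\infty}^{s_1}\!\!\!\cdots\int_{-\infty}^{s_d}(\partial_1\cdots\partial_d f)(u_1,\ldots,u_d)\,du_d\cdots du_1$$
is immediate from the compact support of $f$, and it yields
$$\|f(s)\|_X\leq\int_{\mathbb{R}^d}\|(\partial_1\cdots\partial_d f)(u)\|_X\,du=\|D^{(1,\ldots,1)}f\|_{L_1(\mathbb{R}^d,X)}.$$
Since the multi-index $(1,\ldots,1)$ satisfies $|\alpha|_1=d$, this last quantity is one of the summands in $\|f\|_{W^{d,1}(\mathbb{R}^d,X)}$, giving \eqref{uniform norm ineq} on the dense subspace with no constant at all.

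To pass from $C_c^{\infty}$ to $W^{d,1}$, pick $f_n\in C_c^{\infty}(\mathbb{R}^d,X)$ with $f_n\to f$ in $W^{d,1}(\mathbb{R}^d,X)$. Applying the inequality above to the smooth differences $f_n-f_m$ shows that $(f_n)$ is Cauchy in $L_{\infty}(\mathbb{R}^d,X)$, hence converges uniformly in norm to some $\widetilde{f}\in L_{\infty}(\mathbb{R}^d,X)$. A uniform (norm) limit of norm continuous functions is norm continuous, so $\widetilde{f}$ is norm continuous, and in particular bounded and weak$^{\ast}$ continuous, i.e.\ $\widetilde{f}\in BWC(\mathbb{R}^d,X)$. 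On the other hand $f_n\to f$ in $W^{d,1}$ forces $f_n\to f$ in $L_1^{\mathrm{loc}}$, so $\widetilde{f}=f$ almost everywhere, and passing to the limit in $\|f_n\|_{\infty}\leq\|f_n\|_{W^{d,1}}$ gives \eqref{uniform norm ineq}.

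The only non-mechanical step is the density of $C_c^{\infty}(\mathbb{R}^d,X)$ in $W^{d,1}(\mathbb{R}^d,X)$, which I expect to be the most delicate point because one must check that mollification commutes with Bochner-integrable weak derivatives of vector-valued maps; however, since the mollifier is scalar and derivatives are taken in the Bochner sense, this reduces cleanly to the scalar case applied pointwise against functionals from a predual (or by direct convolution estimates). Everything else is the standard one-line Sobolev embedding argument, here applied to a Banach-valued integrand.
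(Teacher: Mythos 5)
Your proof is correct, but it takes a genuinely different route from the paper. The paper avoids any discussion of density of smooth compactly supported functions in the vector-valued Sobolev space. Instead, it proves the inequality first for scalar-valued ($X=\mathbb{C}$) Schwartz functions via the iterated fundamental theorem of calculus, then extends to general scalar $W^{d,1}(\mathbb{R}^d)$ by density of Schwartz functions, and finally handles the vector-valued case by testing against an arbitrary $g\in X^{\ast}$: the scalar function $s\mapsto\langle g,f(s)\rangle$ lies in $W^{d,1}(\mathbb{R}^d)$ with norm at most $\|g\|_{X^{\ast}}\|f\|_{W^{d,1}(\mathbb{R}^d,X)}$, and taking the supremum over the unit ball of $X^{\ast}$ yields \eqref{uniform norm ineq}. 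This duality reduction sidesteps exactly the point you flag as the ``most delicate'' one --- density of $C_c^{\infty}(\mathbb{R}^d,X)$ in $W^{d,1}(\mathbb{R}^d,X)$ --- at the cost of only establishing weak$^{\ast}$ continuity of $f$, which is all the statement requires ($BWC$ is the space of bounded weak$^{\ast}$ continuous functions). Your argument proves the stronger conclusion that $f$ is norm-continuous, but it does genuinely rely on the vector-valued mollification step; that step is standard (scalar mollifier against a Bochner-integrable function), so your proof is sound, just less economical given what the lemma needs. Both approaches rest on the identical pointwise estimate $\|f(s)\|_X\leq\|\partial_1\cdots\partial_d f\|_{L_1}$.
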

\begin{proof} {\bf Step 1:} Let us prove the inequality \eqref{uniform norm ineq} for $X=\mathbb{C}$ and for every Schwartz function $f.$

For $s\in\mathbb{R}^d,$ let
$$K_s=\{u\in\mathbb{R}^d:\ u\leq s\}.$$
We have
$$f(s)=\int_{K_s}(\partial_0\cdots\partial_{d-1}f)(u)du.$$
Thus,
$$|f(s)|\leq\|\partial_0\cdots\partial_{d-1}f\|_1\leq\|f\|_{W^{d,1}(\mathbb{R}^d)}.$$
Taking supremum over $s\in\mathbb{R}^d,$ we complete the proof of Step 1.

{\bf Step 2:} Let us prove the assertion for $X=\mathbb{C}.$

Now, recall that Schwartz functions are dense in $W^{d,1}(\mathbb{R}^d).$ For a given $f\in W^{d,1}(\mathbb{R}^d),$ choose a sequence $\{f_n\}_{n\geq0}$ of Schwartz functions such that $f_n\to f$ in $W^{d,1}(\mathbb{R}^d)$ (and, therefore, in distributional sense). We have
$$\|f_n-f_m\|_{\infty}\leq\|f_n-f_m\|_{W^{d,1}}\to0,\quad n,m\to\infty.$$
Thus, $\{f_n\}_{n\geq0}$ is a Cauchy sequence in $L_{\infty}(\mathbb{R}^d).$ Therefore, $f_n\to h$ in $L_{\infty}(\mathbb{R}^d)$ (and, therefore, in distributional sense). By uniqueness of the limit, $h=f.$ Hence, $f_n\to f$ in $L_{\infty}(\mathbb{R}^d).$ Since each $f_n$ is continuous, then so is $f.$

{\bf Step 3:} To see the assertion in general case, take $g\in X^{\ast}.$ The function $l_g:s\to \langle g,f(s)\rangle$ belongs to $W^{d,1}(\mathbb{R}^d).$ By Step 2, $l_g$ is continuous for every $g\in X^{\ast}$ and, therefore, $f$ is weak$^{\ast}$ continuous. Clearly,
$$\|l\|_{W^{d,1}}\leq\|g\|_{X^{\ast}}\|f\|_{W^{d,1}(\mathbb{R}^d,X)}.$$
By Step 2, we have
$$\|l\|_{\infty}\leq\|g\|_{X^{\ast}}\|f\|_{W^{d,1}(\mathbb{R}^d,X)}.$$
Taking supremum over the unit ball in $X^{\ast},$ we obtain
$$\|f\|_{L_{\infty}(\mathbb{R}^d,X)}\leq\|f\|_{W^{d,1}(\mathbb{R}^d,X)}.$$
\end{proof}

\begin{lem}\label{poisson second lemma} We have $\mathcal{F}((L_1\cap L_{\infty})(\mathbb{R}^d,X))\subset BWC(\mathbb{R}^d,X).$ 
\end{lem}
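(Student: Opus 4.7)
The plan is to reduce both parts of the claim---boundedness of $\mathcal{F}f$ and its weak$^*$ continuity---to classical scalar-valued Fourier analysis via duality.

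For the boundedness part, I would invoke the standard inequality for the Bochner integral,
$$\|(\mathcal{F}f)(s)\|_X \leq \int_{\mathbb{R}^d}\|f(u)\|_X\,du = \|f\|_{L_1(\mathbb{R}^d,X)},$$
which holds uniformly in $s\in\mathbb{R}^d$. In fact this part only uses $f\in L_1(\mathbb{R}^d,X)$; the $L_\infty$ hypothesis is not needed here.

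For weak$^*$ continuity, fix $g\in X^*$. By the defining property of the Bochner integral, $g$ commutes with integration, so
$$\langle g,(\mathcal{F}f)(s)\rangle = \int_{\mathbb{R}^d}\langle g,f(u)\rangle e^{-2\pi i\langle u,s\rangle}\,du = (\mathcal{F}\ell_g)(s),\qquad \ell_g(u):=\langle g,f(u)\rangle.$$
Since $|\ell_g(u)|\leq\|g\|_{X^*}\|f(u)\|_X$, the scalar function $\ell_g$ belongs to $L_1(\mathbb{R}^d)$. The classical Riemann--Lebesgue lemma then ensures that $\mathcal{F}\ell_g$ is continuous on $\mathbb{R}^d$, which means $s\mapsto\langle g,(\mathcal{F}f)(s)\rangle$ is continuous. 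Since $g\in X^*$ was arbitrary, $\mathcal{F}f$ is weak$^*$ continuous, completing the proof.

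The whole argument is a routine reduction to the scalar case plus one application of the classical continuity of Fourier transforms of $L_1$-functions; there is no genuine obstacle beyond carefully invoking the Bochner-integral identity $\langle g,\int\cdot\rangle=\int\langle g,\cdot\rangle$. Note that the $L_\infty$ hypothesis in the statement is not actually required for this particular assertion, but is presumably retained because the space $(L_1\cap L_\infty)(\mathbb{R}^d,X)$ is the natural domain for the Fourier transform used elsewhere in the paper.
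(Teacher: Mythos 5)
Your argument is correct, and it gives a clean duality-plus-Riemann--Lebesgue proof, but it is not the route taken in the paper. The paper proves something \emph{stronger}, namely norm continuity of $\mathcal{F}f$: fixing $\epsilon>0$, it chooses $n$ with $\int_{\mathbb{R}^d\setminus n\mathbb{B}^d}\|f(u)\|_X\,du<\epsilon$, and on the compact ball $n\mathbb{B}^d$ it uses the pointwise bound $\|f\|_{L_\infty(\mathbb{R}^d,X)}$ together with the elementary estimate $|e^{-2\pi i\langle u,s_1\rangle}-e^{-2\pi i\langle u,s_2\rangle}|\le 2\pi\|u\|_2\|s_1-s_2\|_2$ to conclude $\|(\mathcal{F}f)(s_1)-(\mathcal{F}f)(s_2)\|_X\le c_d\epsilon$ once $\|s_1-s_2\|_2$ is small. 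This is where the $L_\infty$ hypothesis enters: it is genuinely used in the paper's $3\epsilon$ argument for norm continuity. Your approach reduces to the scalar Riemann--Lebesgue lemma via $g\in X^*$ and only establishes continuity of $s\mapsto\langle g,(\mathcal{F}f)(s)\rangle$ for each $g$, which is exactly what the target class $BWC$ requires (compare Step~3 of the paper's Lemma~\ref{poisson first lemma}, which adopts the same convention), so your observation that the $L_\infty$ hypothesis is superfluous for the stated assertion is correct. The trade-off: the paper gets norm continuity and a more self-contained estimate, while you get the minimal assertion with fewer hypotheses and a shorter proof. Either suffices for the downstream application.
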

\begin{proof} Let $f\in L_1(\mathbb{R}^d,X).$ It is obvious that
$$\|\mathcal{F}f\|_{L_{\infty}(\mathbb{R}^d,X)}\leq\|f\|_{L_1(\mathbb{R}^d,X)}.$$

Let $\mathbb{B}^d$ be the unit ball in $\mathbb{R}^d$ centered at $0.$

Fix $\epsilon>0$ and choose $n\in\mathbb{N}$ such that
$$\int_{\mathbb{R}^d\backslash n\mathbb{B}^d}\|f(u)\|_Xdu<\epsilon.$$
We have
$$\Big\|(\mathcal{F}f)(s_1)-(\mathcal{F}f)(s_2)\Big\|_X\leq\Big\|\int_{\mathbb{R}^d\backslash n\mathbb{B}^d}f(u)(e^{-2\pi i\langle u,s_1\rangle}-e^{-2\pi i\langle u,s_1\rangle})du\Big\|_X+$$
$$+\Big\|\int_{n\mathbb{B}^d}f(u)(e^{-2\pi i\langle u,s_1\rangle}-e^{-2\pi i\langle u,s_1\rangle})du\Big\|_X\leq$$
$$\leq 2\epsilon+\|f\|_{L_{\infty}(\mathbb{R}^d,X)}\cdot \int_{n\mathbb{B}^d}|e^{-2\pi i\langle u,s_1\rangle}-e^{-2\pi i\langle u,s_1\rangle}|du\leq$$
$$\leq 2\epsilon+ 2\pi\|f\|_{L_{\infty}(\mathbb{R}^d,X)}\cdot \int_{n\mathbb{B}^d}\|u\|_2\|s_1-s_2\|_2du\leq$$
$$\leq 2\epsilon+2\pi\|f\|_{L_{\infty}(\mathbb{R}^d,X)}\cdot n^{d+1}\|s_1-s_2\|_2\cdot{\rm vol}(\mathbb{B}_d).$$
If
$$\|s_1-s_2\|_2\leq n^{-d-1}\|f\|_{L_{\infty}(\mathbb{R}^d,X)}^{-1}\epsilon,$$
then
$$\|(\mathcal{F}f)(s_1)-(\mathcal{F}f)(s_2)\|_X\leq c_d\epsilon.$$
Since $\epsilon$ is arbitrarily small, the assertion follows.
\end{proof}

\begin{lem}\label{poisson third lemma} We have $W^{d,1}(\mathbb{R}^d,X)\subset (l_1(L_{\infty}))(\mathbb{R}^d,X).$  Moreover, we have
$$\|f\|_{l_1(L_{\infty})(\mathbb{R}^d,X)}\leq c_d\|f\|_{W^{d,1}(\mathbb{R}^d,X)},\quad f\in W^{d,1}(\mathbb{R}^d,X).$$
\end{lem}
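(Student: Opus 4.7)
The plan is to localise to unit cubes, apply the one-cube embedding $W^{d,1}(\mathbb{R}^d,X)\subset L_\infty(\mathbb{R}^d,X)$ already established in Lemma \ref{poisson first lemma}, and then sum over the canonical $\mathbb{Z}^d$-partition of $\mathbb{R}^d$. Concretely, I fix once and for all a scalar cut-off $\chi\in C_c^\infty(\mathbb{R}^d)$ with $\chi\equiv 1$ on $[0,1]^d$ and $\mathrm{supp}(\chi)\subset[-1,2]^d$, and set $\chi_n(u)=\chi(u-n)$, $Q_n=n+[0,1]^d$, $\tilde Q_n=n+[-1,2]^d$ for each $n\in\mathbb{Z}^d$. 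Thus $\chi_n\equiv 1$ on $Q_n$, $\mathrm{supp}(\chi_n)\subset\tilde Q_n$, and all derivatives of $\chi_n$ of order at most $d$ are bounded in $L_\infty$ by a constant depending only on $\chi$ and $d$. Under this set-up the $l_1(L_\infty)$ norm reads
$$\|f\|_{l_1(L_\infty)(\mathbb{R}^d,X)}=\sum_{n\in\mathbb{Z}^d}\|f\|_{L_\infty(Q_n,X)}.$$

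For each fixed $n$, the function $\chi_n f$ lies in $W^{d,1}(\mathbb{R}^d,X)$, and applying Lemma \ref{poisson first lemma} to it gives
$$\|f\|_{L_\infty(Q_n,X)}\le\|\chi_n f\|_{L_\infty(\mathbb{R}^d,X)}\le\|\chi_n f\|_{W^{d,1}(\mathbb{R}^d,X)}.$$
Expanding $D^\alpha(\chi_n f)$ by the Leibniz rule and using the uniform bounds on the derivatives of $\chi_n$, one obtains a constant $c_\chi=c_\chi(d)$ (independent of $n$) such that
$$\|\chi_n f\|_{W^{d,1}(\mathbb{R}^d,X)}\le c_\chi\|f\|_{W^{d,1}(\tilde Q_n,X)}.$$

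Summing over $n\in\mathbb{Z}^d$ and observing that the enlarged cubes $\{\tilde Q_n\}_{n\in\mathbb{Z}^d}$ cover $\mathbb{R}^d$ with multiplicity at most $3^d$, one concludes
$$\sum_{n\in\mathbb{Z}^d}\|f\|_{L_\infty(Q_n,X)}\le c_\chi\sum_{n\in\mathbb{Z}^d}\|f\|_{W^{d,1}(\tilde Q_n,X)}\le 3^d c_\chi\|f\|_{W^{d,1}(\mathbb{R}^d,X)},$$
which is exactly the claimed inequality with $c_d=3^d c_\chi$.

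I do not foresee any genuine obstacle here: this is the standard localisation trick for amalgam-type inequalities. The only mild point to verify is that the Leibniz rule and the pointwise inequality above make sense for $X$-valued weak$^\ast$ continuous Sobolev functions, but this reduces at once to the scalar case by pairing with functionals $g\in X^\ast$ and taking the supremum over the unit ball, exactly as in Step 3 of the proof of Lemma \ref{poisson first lemma}.
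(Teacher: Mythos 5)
Your proof is correct and follows essentially the same route as the paper: you localise with translated smooth cut-offs, apply the one-cube embedding from Lemma \ref{poisson first lemma}, expand by Leibniz, and sum using bounded overlap of the enlarged cubes. The only differences are cosmetic (the paper uses $\phi$ supported in $[-1,1]^d$ with $\phi\equiv 1$ on $[-\frac12,\frac12]^d$, giving overlap $2^d$ rather than your $3^d$, and the paper's $l_1(L_\infty)$ sum runs over the cubes $n+[-\frac12,\frac12]^d$ rather than $n+[0,1]^d$), which only affect the value of $c_d$.
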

\begin{proof} Let $\phi$ be a smooth function supported on $[-1,1]^d$ such that $\phi=1$ on $[-\frac12,\frac12]^d.$ We have
$$\|f\|_{(l_1(L_{\infty}))(\mathbb{R}^d,X)}\leq\sum_{n\in\mathbb{Z}^d}\|f\cdot T_n\phi\|_{L_{\infty}(\mathbb{R}^d,X)}.$$
Using Lemma \ref{poisson first lemma}, we have
$$\|f\|_{(l_1(L_{\infty}))(\mathbb{R}^d,X)}\leq\sum_{n\in\mathbb{Z}^d}\|f\cdot T_n\phi\|_{W^{d,1}(\mathbb{R}^d,X)}\leq$$
$$\leq\|\phi\|_{C^d([-1,1]^d)}\cdot\sum_{n\in\mathbb{Z}^d}\|f\|_{W^{d,1}(n+[-1,1]^d,X)}=2^d\|\phi\|_{C^d([-1,1]^d)}\|f\|_{W^{d,1}(\mathbb{R}^d,X)}.$$
\end{proof}

\begin{lem}\label{poisson fourth lemma} For $p>d,$ we have $\mathcal{F}(W^{p,1}(\mathbb{R}^d,X))\subset (l_1(L_{\infty}))(\mathbb{R}^d,X).$ Moreover, we have
$$\|\mathcal{F}f\|_{l_1(L_{\infty})(\mathbb{R}^d,X)}\leq c_{p,d}\|f\|_{W^{p,1}(\mathbb{R}^d,X)},\quad f\in W^{p,1}(\mathbb{R}^d,X).$$
\end{lem}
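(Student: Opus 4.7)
The plan is to derive a pointwise polynomial decay estimate
$$\|(\mathcal{F}f)(s)\|_X \le \frac{c_{p,d}\,\|f\|_{W^{p,1}(\mathbb{R}^d,X)}}{1+|s|^p}$$
and then sum this geometrically against the partition-of-unity description of the $l_1(L_\infty)$-norm that was used in Lemma \ref{poisson third lemma}. The hypothesis $p>d$ enters at the very end, as the exponent needed for the sum over $\mathbb{Z}^d$ to converge.

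First I would record two vector-valued Fourier identities for any Bochner-integrable $X$-valued function: the trivial Riemann--Lebesgue bound $\|\mathcal{F}g\|_{L_\infty(\mathbb{R}^d,X)} \le \|g\|_{L_1(\mathbb{R}^d,X)}$, and the differentiation rule $(2\pi i s)^\alpha(\mathcal{F}f)(s) = \mathcal{F}(\partial^\alpha f)(s)$ for every multi-index $\alpha$ with $|\alpha|_1\le p$. Both facts reduce to their scalar counterparts by pairing with an arbitrary $g\in X^*$, exactly in the spirit of Step~3 of the proof of Lemma \ref{poisson first lemma}. Combined, they give
$$\|s^\alpha (\mathcal{F}f)\|_{L_\infty(\mathbb{R}^d,X)} \le (2\pi)^{-|\alpha|_1}\|\partial^\alpha f\|_{L_1(\mathbb{R}^d,X)} \le (2\pi)^{-|\alpha|_1}\,\|f\|_{W^{p,1}(\mathbb{R}^d,X)}.$$

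Applying this with $\alpha=0$ and with $\alpha=pe_j$ for $j=1,\dots,d$, and using the elementary inequality $|s|^p\le d^{p/2}\sum_{j=1}^d|s_j|^p$, I obtain
$$(1+|s|^p)\,\|(\mathcal{F}f)(s)\|_X \le c_{p,d}\,\|f\|_{W^{p,1}(\mathbb{R}^d,X)}, \qquad s\in\mathbb{R}^d.$$
Finally I insert this pointwise bound into the partition-of-unity estimate from Lemma \ref{poisson third lemma}: with $\phi$ smooth, supported in $[-1,1]^d$ and equal to $1$ on $[-\tfrac12,\tfrac12]^d$,
$$\|\mathcal{F}f\|_{l_1(L_\infty)(\mathbb{R}^d,X)} \le \sum_{n\in\mathbb{Z}^d}\|(\mathcal{F}f)\cdot T_n\phi\|_{L_\infty(\mathbb{R}^d,X)} \le \|\phi\|_\infty\sum_{n\in\mathbb{Z}^d}\|\mathcal{F}f\|_{L_\infty(n+[-1,1]^d,X)}.$$
Since $|s|\ge |n|-\sqrt{d}$ on $n+[-1,1]^d$, the pointwise bound controls each summand by $c'_{p,d}(1+|n|)^{-p}\,\|f\|_{W^{p,1}}$, and $\sum_{n\in\mathbb{Z}^d}(1+|n|)^{-p}<\infty$ exactly because $p>d$. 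This yields the claimed inequality.

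The only delicate point is the first step: carefully justifying the differentiation-under-the-Fourier-transform identity and the $L_1\to L_\infty$ bound in the Bochner setting, where Plancherel is unavailable. Everything else is a routine polynomial decay / geometric sum estimate; in particular, no property of $\mathcal{F}f$ beyond the above pointwise bound is needed, and $p>d$ is used only to ensure summability of $(1+|n|)^{-p}$ over $\mathbb{Z}^d$.
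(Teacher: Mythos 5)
Your proposal is correct and follows essentially the same strategy as the paper's proof: derive the pointwise decay bound $\|(\mathcal{F}f)(s)\|_X\lesssim\min\{1,|s|^{-p}\}\|f\|_{W^{p,1}}$ from vector-valued Riemann--Lebesgue and the differentiation rule for the Fourier transform, then observe that $s\mapsto\min\{1,|s|^{-p}\}$ has finite $l_1(L_\infty)$-norm precisely because $p>d$. The only cosmetic difference is in how the decay is extracted: you apply the multi-index rule with $\alpha=pe_j$ (implicitly taking $p$ an integer, as indeed it is in the paper's applications) and use $|s|^p\le d^{p/2}\sum_j|s_j|^p$, whereas the paper passes through the multiplier $\Delta^{p/2}$ and writes the final summation as membership of $\min\{|s|^{-p},1\}$ in $l_1(L_\infty)(\mathbb{R}^d)$ rather than spelling out the partition of unity; the two are equivalent.
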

\begin{proof} We have
$$|s|^p\cdot \|(\mathcal{F}f)(s)\|_X=\|(\mathcal{F}(\Delta^{\frac{p}{2}}f))(s)\|_X\leq\|\mathcal{F}(\Delta^{\frac{p}{2}}f)\|_{L_{\infty}(\mathbb{R}^d,X)}\leq$$
$$\leq\|\Delta^{\frac{p}{2}}f\|_{L_1(\mathbb{R}^d,X)}\leq\|f\|_{W^{p,1}(\mathbb{R}^d,X)},$$
$$\|(\mathcal{F}f)(s)\|_X\leq\|\mathcal{F}(f)\|_{L_{\infty}(\mathbb{R}^d,X)}\leq \|f\|_{L_1(\mathbb{R}^d,X)}\leq\|f\|_{W^{p,1}(\mathbb{R}^d,X)}.$$
That is,
$$\|(\mathcal{F}f)(s)\|_X\leq\min\{|s|^{-p},1\}\cdot \|f\|_{W^{p,1}(\mathbb{R}^d,X)}.$$
Since the mapping
$$s\to \min\{|s|^{-p},1\}$$
belongs to $(l_1(L_{\infty}))(\mathbb{R}^d),$ the assertion follows.
\end{proof}

\begin{proof}[Proof of Theorem \ref{poisson thm}] By Lemma \ref{poisson first lemma}, $f(s)$ makes sense for every $f\in W^{d,1}(\mathbb{R}^d,X)$ and for every $s\in\mathbb{R}^d.$ By Lemma \ref{poisson third lemma}, we have
$$\sum_{n\in\mathbb{Z}^d}\|f(n)\|_X\leq\sum_{n\in\mathbb{Z}^d}\sup_{s\in n+[-\frac12,\frac12]^d}\|f(s)\|_X=\|f\|_{l_1(L_{\infty})(\mathbb{R}^d,X)}\leq c_d\|f\|_{W^{d,1}(\mathbb{R}^d,X)}.$$
In particular, the series in the left hand side converges in $X$ and
$$\|\sum_{n\in\mathbb{Z}^d}f(n)\|_X\leq c_d\|f\|_{W^{d,1}(\mathbb{R}^d,X)}.$$
That is, left hand side defines a bounded mapping $T:W^{d,1}(\mathbb{R}^d)\to X.$

By Lemma \ref{poisson second lemma}, $(\mathcal{F}f)(s)$ makes sense for every $f\in W^{d,1}(\mathbb{R}^d,X)$ and for every $s\in\mathbb{R}^d.$ By Lemma \ref{poisson fourth lemma}, we have
$$\sum_{n\in\mathbb{Z}^d}\|(\mathcal{F}f)(n)\|_X\leq\sum_{n\in\mathbb{Z}^d}\sup_{s\in n+[-\frac12,\frac12]^d}\|(\mathcal{F}f)(s)\|_X=$$
$$=\|\mathcal{F}f\|_{l_1(L_{\infty})(\mathbb{R}^d,X)}\leq c_{p,d}\|f\|_{W^{p,1}(\mathbb{R}^d,X)}.$$
In particular, the series in the right hand side converges in $X$ and
$$\|\sum_{n\in\mathbb{Z}^d}(\mathcal{F}f)(n)\|_X\leq c_{p,d}\|f\|_{W^{p,1}(\mathbb{R}^d,X)}.$$
That is, right hand side defines a bounded mapping $S:W^{p,1}(\mathbb{R}^d)\to X.$

If $f$ is vector valued Schwartz function and if $g\in X^{\ast},$ then $l_g:s\to\langle g,f(s)\rangle$ is a Schwartz function. We have
$$\langle g,Tf\rangle=\sum_{n\in\mathbb{Z}^d}\langle g,f(n)\rangle=\sum_{n\in\mathbb{Z}^d}l_g(n).$$
We also have
$$\langle g,Sf\rangle=\sum_{n\in\mathbb{Z}^d}\langle g,(\mathcal{F}f)(n)\rangle=\sum_{n\in\mathbb{Z}^d}(\mathcal{F}l_g)(n).$$
We take Poisson formula for scalar valued Schwartz functions for granted --- it follows from the distributional equality
$$\sum_{n\in\mathbb{Z}^d}\delta(x-n)=\sum_{n\in\mathbb{Z}^d}e^{2\pi i\langle n,x\rangle},\quad x\in\mathbb{R}^d.$$
That is, we have
$$\sum_{n\in\mathbb{Z}^d}l_g(n)=\sum_{n\in\mathbb{Z}^d}(\mathcal{F}l_g)(n).$$
Combining these $3$ equalities, we infer
$$\langle g,Tf\rangle=\langle g,Sf\rangle,\quad g\in X^{\ast}.$$
In other words, $Tf=Sf$ for every vector valued Schwartz function.

That is, we have $2$ bounded linear maps from $W^{p,1}(\mathbb{R}^d,X)$ to $X.$ These maps coincide on the subspace of vector valued Schwartz functions. Since vector valued Schwartz functions are dense in $W^{p,1}(\mathbb{R}^d,X),$ it follows immediately that these maps coincide on $W^{p,1}(\mathbb{R}^d,X).$ This completes the proof.
\end{proof}

\begin{proof}[Proof of Theorem \ref{sobolev connes lemma}] By Theorem \ref{poisson thm}, we have
$$\sum_{n\in\mathbb{Z}^d}f(tn)=\sum_{n\in\mathbb{Z}^d}(\sigma_{\frac1t}f)(n)=\sum_{n\in\mathbb{Z}^d}(\mathcal{F}\sigma_{\frac1t}f)(n)=t^{-d}\sum_{n\in\mathbb{Z}^d}(\mathcal{F}f)(t^{-1}n).$$
In other words, we have
$$\sum_{n\in\mathbb{Z}^d}f(tn)-t^{-d}\int_{\mathbb{R}^d}f(u)du=t^{-d}\sum_{0\neq n\in\mathbb{Z}^d}(\mathcal{F}f)(t^{-1}n).$$

Recall that
$$|s|^p\cdot \|(\mathcal{F}f)(s)\|_X=\|(\mathcal{F}(\Delta^{\frac{p}{2}}f))(s)\|_X\leq\|\mathcal{F}(\Delta^{\frac{p}{2}}f)\|_{L_{\infty}(\mathbb{R}^d,X)}\leq$$
$$\leq\|\Delta^{\frac{p}{2}}f\|_{L_1(\mathbb{R}^d,X)}\leq\|f\|_{W^{p,1}(\mathbb{R}^d,X)}.$$
Hence, for $n\neq0,$
$$\|(\mathcal{F}f)(t^{-1}n)\|_X\leq \frac{t^p}{|n|^p}\|f\|_{W^{p,1}(\mathbb{R}^d,X)}.$$
We now infer that
$$\Big\|\sum_{0\neq n\in\mathbb{Z}^d}(\mathcal{F}f)(t^{-1}n)\Big\|_X\leq\sum_{0\neq n\in\mathbb{Z}^d}\frac{t^p}{|n|^p}\|f\|_{W^{p,1}(\mathbb{R}^d,X)}=c_pt^p\|f\|_{W^{p,1}(\mathbb{R}^d,X)}.$$
This completes the proof.
\end{proof}

\section{Proof of Theorem \ref{finite expansion thm}}\label{verification section}

\begin{lem}\label{complex shift lemma} We have
$${\rm Good}_k(s)=\frac1{2\pi i}\int_{1+i\mathbb{R}}{\rm good}_k(s,z)e^zdz.$$
\end{lem}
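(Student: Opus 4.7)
My plan is to obtain the identity by a contour shift: apply Cauchy's theorem to the rectangle $R_N$ with vertices $\pm iN$ and $1\pm iN$, then pass to the limit $N\to\infty$. The two vertical sides of $R_N$ become, in the limit, the two integrals appearing in the statement, and the goal is to show the two horizontal sides vanish.

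First I would verify analyticity and the relevant decay on the closed strip $\{0\leq \Re z\leq 1\}$. Analyticity of $z\mapsto {\rm good}_k(s,z)e^z$ on this strip is immediate from Theorem \ref{resolvent splitting theorem}\eqref{rspb}, since $\mathbb{R}_-$ is disjoint from the strip. The resolvent estimate of Lemma \ref{first sobolev lemma} was proved for $\Re z\leq 0$, but its proof transfers verbatim to $\Re z\in[0,1]$: the element $x(s)$ is positive with $\sigma(x(s))\subset [c(g)|s|^2,\infty)$, so $\mathrm{dist}(-z,\sigma(x(s)))\gtrsim |s|^2+|z|$ throughout the strip, which is the only property of $z$ used. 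The inductive arguments of Lemmas \ref{second sobolev lemma} and \ref{third sobolev lemma} then give, without change,
$$\|x_m^{\mathscr{A}}(s,z)\|_{W^{k,2}}\leq \frac{c_{m,k}(g)}{(|s|^2+|z|)^{m-|\mathscr{A}|/2}},\qquad 0\leq\Re z\leq 1,$$
and summing over $m$ and $\mathscr{A}$ as in Notation \ref{fifth resolvent nota} yields a uniform bound $\|{\rm good}_k(s,z)\|_{2}\leq C(g,s)(|s|^2+|z|)^{-1-k/2}$ on the strip.

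Next I would dispatch the horizontal segments. On $z=u\pm iN$ with $u\in[0,1]$ we have $|e^z|\leq e$, and the bound above gives
$$\Big\|\int_{0}^{1}{\rm good}_k(s,u\pm iN)\,e^{u\pm iN}\,du\Big\|_{2}=O(N^{-1-k/2})\xrightarrow[N\to\infty]{}0.$$
Cauchy's theorem on $R_N$ now reads
$$\int_{1-iN}^{1+iN}{\rm good}_k(s,z)e^z\,dz-\int_{-iN}^{iN}{\rm good}_k(s,z)e^z\,dz=\int_{0}^{1}{\rm good}_k(s,u+iN)e^{u+iN}du-\int_{0}^{1}{\rm good}_k(s,u-iN)e^{u-iN}du,$$
and letting $N\to\infty$ kills the right-hand side. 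Parametrising $z=i\lambda$ on the left axis identifies the symmetric limit there with $2\pi i\cdot {\rm Good}_k(s)$ via Notation \ref{first exponential notation}.

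The only subtlety is the case $k=0$, where the vertical integrals are only conditionally convergent and must be read as principal values. This is automatic in the present approach: Cauchy's theorem is applied at each finite $N$ to the symmetric rectangle $R_N$, so what appears after removing the horizontal pieces is exactly the symmetric truncation $\int_{-iN}^{iN}$ which defines the principal value, on both sides simultaneously. I expect the mildly nontrivial point to be the extension of the resolvent estimate from $\Re z\leq 0$ to the full strip $\Re z\in[0,1]$, but as indicated this only costs adjustment of constants; the rest is standard complex analysis.
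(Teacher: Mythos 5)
Your argument is correct and follows essentially the same route as the paper's proof: apply Cauchy's theorem on the rectangle with vertices $\pm iN$, $1\pm iN$, and show the two horizontal sides contribute $o(1)$ as $N\to\infty$. You are somewhat more explicit than the paper about why the decay estimates of Lemmas \ref{first sobolev lemma}--\ref{third sobolev lemma} extend from $\{\Re z\leq 0\}$ to the full strip $\{0\leq\Re z\leq 1\}$ (the paper simply asserts the horizontal-side bound), and you make the principal-value reading at $k=0$ explicit via the symmetric truncation, but these are elaborations rather than a different method.
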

\begin{proof} The crucial fact is that, for $s\neq0,$ the mapping $z\to {\rm good}_k(s,z)$ is holomorphic in the half-plane $\{\Re(z)>-\epsilon\},$ where $\epsilon$ depends on $s.$ We have
\begin{equation}\label{corrk eq}
\frac1{2\pi}\int_{\mathbb{R}}{\rm good}_k(s,i\lambda)e^{i\lambda}d\lambda=\frac1{2\pi i}\int_{i\mathbb{R}}{\rm good}_k(s,z)e^zdz.
\end{equation}

We claim that
$$\int_{1+i\mathbb{R}}{\rm good}_k(s,z)e^zdz=\int_{i\mathbb{R}}{\rm good}_k(s,z)e^zdz.$$
Indeed, we have
$$\int_{1+i\mathbb{R}}{\rm good}_k(s,z)e^zdz=\lim_{N\to\infty}\int_{1-iN}^{1+iN}{\rm good}_k(s,z)e^zdz.$$
Using analyticity and Cauchy theorem, we write
$$\int_{1-iN}^{1+iN}{\rm good}_k(s,z)e^zdz=\int_{1-iN}^{-iN}{\rm good}_k(s,z)e^zdz+$$
$$+\int_{-iN}^{iN}{\rm good}_k(s,z)e^zdz+\int_{iN}^{1+iN}{\rm good}_k(s,z)e^zdz.$$
Obviously,
$$\Big\|\int_{1-iN}^{-iN}{\rm good}_k(s,z)e^zdz\Big\|_{\infty}\leq e\cdot\sup_{t\in(0,1)}\|{\rm good}_k(s,t-iN)\|_{\infty}.$$
Thus,
$$\int_{1-iN}^{-iN}{\rm good}_k(s,z)e^zdz=o(1),\quad N\to\infty.$$
Similarly,
$$\int_{iN}^{1+iN}{\rm good}_k(s,z)e^zdz=o(1),\quad N\to\infty.$$
This proves the claim and, hence, the assertion of the lemma.
\end{proof}

\begin{lem}\label{first monomial estimate} Let $\alpha,\beta\in\mathbb{Z}_+^d$ and let
$$g(s,z)=(x(s)+z)^{-1}s^{\beta}.$$
For $\Re z>0$ and for every $s\in\mathbb{R}^d,$ we have
$$\Big\|\partial^{\alpha}\frac{d^N}{d^Nz}g(s,z)\Big\|_{\infty}=O((|s|^2+|z|)^{\frac12|\beta|_1-\frac12|\alpha|_1-N-1}).$$
\end{lem}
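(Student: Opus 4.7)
The plan is to first establish a purely "spatial" estimate for $(x(s)+z)^{-1}$ and its $s$-derivatives, then fold in the $z$-derivative and finally use the Leibniz rule to account for the polynomial factor $s^\beta$.

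\emph{Step 1: $s$-derivatives of the resolvent.} Since $x(s)=\sum_{i,j}(g^{-1})_{ij}s_is_j$ is quadratic in $s$ with coefficients in $C^\infty(\mathbb{T}^d_\theta)$, the derivative $\partial^\mu x(s)$ vanishes for $|\mu|_1\ge 3$, is bounded uniformly in $s$ for $|\mu|_1=2$, and is $O(|s|)$ for $|\mu|_1=1$. By induction on $|\alpha|_1$, using $\partial_j(x(s)+z)^{-1}=-(x(s)+z)^{-1}(\partial_j x(s))(x(s)+z)^{-1}$ and the uniform estimate $\|(x(s)+z)^{-1}\|_\infty\le c(g)(|s|^2+|z|)^{-1}$ (which holds because $g$ is a Riemannian metric, so $x(s)\ge c|s|^2$), I will show
$$\bigl\|\partial^{\alpha}(x(s)+z)^{-1}\bigr\|_\infty=O\bigl((|s|^2+|z|)^{-1-\frac12|\alpha|_1}\bigr).$$
The inductive step expands $\partial^\alpha(x(s)+z)^{-1}$ as a sum of products of factors $(x(s)+z)^{-1}$ interlaced with factors $\partial^{\mu^{(k)}} x(s)$ with $|\mu^{(k)}|_1\in\{1,2\}$ and $\sum_k|\mu^{(k)}|_1=|\alpha|_1+\bigl(\text{number of factors}-1\bigr)$; a bookkeeping of the powers gives the claimed exponent.

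\emph{Step 2: $z$-derivative.} The operator $(x(s)+z)^{-1}$ commutes with $\frac{d}{dz}$, so $\frac{d^N}{dz^N}$ just produces an extra factor $(-1)^N N!(x(s)+z)^{-N}$ in the outermost resolvent of each term from Step 1. Consequently
$$\Bigl\|\partial^\alpha\frac{d^N}{dz^N}(x(s)+z)^{-1}\Bigr\|_\infty=O\bigl((|s|^2+|z|)^{-N-1-\frac12|\alpha|_1}\bigr).$$

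\emph{Step 3: Leibniz and conclusion.} Using the (commutative) Leibniz rule in $s$,
$$\partial^\alpha\frac{d^N}{dz^N}g(s,z)=\sum_{\gamma\le\alpha}\binom{\alpha}{\gamma}\bigl(\partial^\gamma s^\beta\bigr)\,\partial^{\alpha-\gamma}\frac{d^N}{dz^N}(x(s)+z)^{-1}.$$
Each scalar factor $\partial^\gamma s^\beta$ is $O(|s|^{|\beta|_1-|\gamma|_1})$ (zero if $\gamma\not\le\beta$), and using $|s|\le(|s|^2+|z|)^{1/2}$ gives $|s|^{|\beta|_1-|\gamma|_1}\le(|s|^2+|z|)^{(|\beta|_1-|\gamma|_1)/2}$. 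Substituting the bound of Step 2 yields
$$(|s|^2+|z|)^{\frac12(|\beta|_1-|\gamma|_1)-N-1-\frac12(|\alpha|_1-|\gamma|_1)}=(|s|^2+|z|)^{\frac12|\beta|_1-\frac12|\alpha|_1-N-1},$$
which is independent of $\gamma$ and completes the proof after summing the finitely many terms.

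\emph{Main obstacle.} The only nontrivial part is Step 1, where one must track the non-commutativity between the $(x(s)+z)^{-1}$ factors and the derivatives $\partial^\mu x(s)$ carefully. The estimate is purely in $\|\cdot\|_\infty$, so no Sobolev machinery is needed beyond submultiplicativity; the combinatorics parallels that of Lemma \ref{first sobolev lemma}, and that lemma can be quoted as a template for the induction.
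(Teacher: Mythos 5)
Your argument is correct and essentially parallels the paper's, the only difference being how the induction is organized: you induct on $|\alpha|_1$ for a fixed first-order resolvent and then add $z$-derivatives afterward, whereas the paper proves the intermediate claim $\|\partial^\alpha(x(s)+z)^{-M}\|_\infty=O\bigl((|s|^2+|z|)^{-\frac12|\alpha|_1-M}\bigr)$ by inducting on the resolvent power $M$ and dismissing the $M=1$ base case as ``obvious.'' Your Step~1 in fact supplies the missing content behind that ``obvious,'' so you are being more explicit than the paper, not less.

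One small inaccuracy to fix in Step~2: it is not true that $\frac{d^N}{dz^N}$ ``produces an extra factor $(-1)^NN!(x(s)+z)^{-N}$ in the outermost resolvent'' of each term. In a term of the form
$$(x(s)+z)^{-1}a_1(x(s)+z)^{-1}a_2\cdots a_k(x(s)+z)^{-1}$$
the $z$-derivative, by the (non-commutative) Leibniz rule, distributes over \emph{all} $k+1$ resolvent factors, giving a sum indexed by $(N_0,\dots,N_k)$ with $\sum_jN_j=N$, in which the $j$-th factor becomes $(-1)^{N_j}N_j!(x(s)+z)^{-1-N_j}$. The conclusion you draw is still correct, because each factor admits the same bound $\|(x(s)+z)^{-1-N_j}\|_\infty\leq c(|s|^2+|z|)^{-1-N_j}$, so the total exponent drops by exactly $N$ regardless of how the derivatives distribute; but the phrase ``in the outermost resolvent'' should be replaced by this distributed-Leibniz argument. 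An equally clean repair is to commute $\frac{d^N}{dz^N}$ past $\partial^\alpha$ at the outset, write $\frac{d^N}{dz^N}(x(s)+z)^{-1}=(-1)^NN!(x(s)+z)^{-N-1}$, and extend your Step~1 to the claim about $\partial^\alpha(x(s)+z)^{-M}$ for general $M$ by a secondary induction on $M$ --- which is precisely the route the paper takes.
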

\begin{proof} Set $h(s,z)=(x(s)+z)^{-N}.$ We claim that
$$\Big\|\partial^{\alpha}h(s,z)\Big\|_{\infty}=O((|s|^2+|z|)^{-\frac12|\alpha|_1-N}).$$
We prove the assertion by induction on $N.$ For $N=1,$ it is obvious. Let us prove it for $N+1.$

Let $h=h_1h_2,$ where $h_1(s,z)=(x(s)+z)^{-N}$ and $h_2(s,z)=(x(s)+z)^{-1}.$ By Leibniz rule, we have
$$\partial^{\alpha}h=\sum_{\alpha_1+\alpha_2=\alpha}c^{\alpha}_{\alpha_1}\partial^{\alpha_1}h_1\cdot \partial^{\alpha_2}h_2.$$
By triangle inequality, we have
$$\Big\|\partial^{\alpha}h(s,z)\Big\|_{\infty}\leq\sum_{\alpha_1+\alpha_2=\alpha}c^{\alpha}_{\alpha_1}\Big\|\partial^{\alpha_1}h_1(s,z)\Big\|_{\infty}\cdot\Big\|\partial^{\alpha_2}h_2(s,z)\Big\|_{\infty}.$$
By inductive assumption, we have
$$\Big\|\partial^{\alpha_1}h_1(s,z)\Big\|_{\infty}=O((|s|^2+|z|)^{-\frac12|\alpha_1|_1-N}).$$
Obviously,
$$\Big\|\partial^{\alpha_2}h_2(s,z)\Big\|_{\infty}=O((|s|^2+|z|)^{-\frac12|\alpha_2|_1-1}).$$
Combining these 3 estimates we establish the claim.

The assertion for $\beta=0$ follows immediately from the claim above.

Consider now the general case. Let $g=h_1h_2,$ where $h_1(s,z)=s^{\beta}$ and $h_2(s,z)=(x(s)+z)^{-1}.$ By Leibniz rule, we have
$$\partial^{\alpha}\frac{d^N}{d^Nz}g=\sum_{\alpha_1+\alpha_2=\alpha}c^{\alpha}_{\alpha_1}\partial^{\alpha_1}h_1\cdot \partial^{\alpha_2}\frac{d^N}{d^Nz}h_2.$$
By triangle inequality, we have
$$\Big\|\partial^{\alpha}\frac{d^N}{d^Nz}g(s,z)\Big\|_{\infty}\leq\sum_{\alpha_1+\alpha_2=\alpha}c^{\alpha}_{\alpha_1}\Big\|\partial^{\alpha_1}h_1(s,z)\Big\|_{\infty}\cdot\Big\|\partial^{\alpha_2}\frac{d^N}{d^Nz}h_2(s,z)\Big\|_{\infty}.$$
By the special case proved above, we have
$$\Big\|\partial^{\alpha_2}\frac{d^N}{d^Nz}h_2(s,z)\Big\|_{\infty}=O((|s|^2+|z|)^{-\frac12|\alpha_2|_1-N-1}).$$
Evidently,
$$\Big\|\partial^{\alpha_1}h_1(s,z)\Big\|_{\infty}=
\begin{cases}
O(|s|^{|\beta|_1-|\alpha|_1})&\alpha\leq\beta\\
0,&\alpha\not\leq\beta
\end{cases}.$$
A combination of these 3 estimates yields the assertion.
\end{proof}

\begin{lem}\label{second monomial estimate} Let $a_l\in L_{\infty}(\mathbb{T}^d_{\theta}),$ $1\leq l\leq L,$ and let
$$g(s,z)=\prod_{l=1}^La_l(x(s)+z)^{-1}.$$
For $\Re z>0$ and for every $s\in\mathbb{R}^d,$ we have
$$\Big\|\partial^{\alpha}\frac{d^N}{d^Nz}g(s,z)\Big\|_{\infty}=O((|s|^2+|z|)^{-\frac12|\alpha|_1-L-N}).$$
\end{lem}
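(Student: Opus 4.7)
My plan is to reduce Lemma \ref{second monomial estimate} to the previous Lemma \ref{first monomial estimate} via the multi-index Leibniz rule, exploiting the fact that the $a_l$ are constants with respect to $s$ and $z$.

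First I would write
$$g(s,z)=\prod_{l=1}^L a_l\cdot h_l(s,z),\qquad h_l(s,z):=(x(s)+z)^{-1},$$
so that only the factors $h_l$ depend on $(s,z)$. Applying the Leibniz rule jointly in the multi-index $\alpha\in\mathbb{Z}_+^d$ and in the integer $N$, one obtains
$$\partial^{\alpha}\frac{d^{N}}{dz^{N}}g(s,z)=\sum_{\substack{\alpha_1+\cdots+\alpha_L=\alpha\\ N_1+\cdots+N_L=N}} c_{\alpha_1,\ldots,\alpha_L}^{N_1,\ldots,N_L}\,\prod_{l=1}^{L} a_l\cdot \partial^{\alpha_l}\frac{d^{N_l}}{dz^{N_l}}h_l(s,z),$$
for suitable multinomial coefficients. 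The product on the right must be interpreted with the order preserved, since the $a_l$ and $h_l$ live in the non-commutative algebra $L_{\infty}(\mathbb{T}^d_{\theta})$; this is not an obstruction because we will estimate each factor in the $L_\infty$ norm and use submultiplicativity.

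Next I would apply Lemma \ref{first monomial estimate} with $\beta=0$ to each resolvent factor: for every $l$,
$$\Big\|\partial^{\alpha_l}\frac{d^{N_l}}{dz^{N_l}}h_l(s,z)\Big\|_{\infty}=O\!\bigl((|s|^2+|z|)^{-\frac12|\alpha_l|_1-N_l-1}\bigr).$$
Combined with $\|a_l\|_\infty=O(1)$ and submultiplicativity of $\|\cdot\|_\infty$, each term in the Leibniz expansion satisfies
$$\Big\|\prod_{l=1}^L a_l\cdot \partial^{\alpha_l}\frac{d^{N_l}}{dz^{N_l}}h_l(s,z)\Big\|_\infty=O\!\Big((|s|^2+|z|)^{-\tfrac12\sum_l|\alpha_l|_1-\sum_l N_l-L}\Big)=O\!\bigl((|s|^2+|z|)^{-\tfrac12|\alpha|_1-N-L}\bigr),$$
since $\sum_l|\alpha_l|_1=|\alpha|_1$ and $\sum_l N_l=N$. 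Summing over the finitely many partitions of $(\alpha,N)$ and absorbing the multinomial coefficients into the implicit constant gives the claimed bound.

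There is no serious obstacle here: the proof is essentially bookkeeping on top of Lemma \ref{first monomial estimate}. The only mild point to be careful about is that, because the algebra is non-commutative, one cannot rearrange the $a_l$ and the resolvents, so the Leibniz rule has to be applied in a form that keeps the product in its original order; but since we only need $L_\infty$-norm estimates and the norm is submultiplicative, this causes no difficulty.
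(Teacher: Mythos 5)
Your proof is correct and uses essentially the same mechanism as the paper: reduce to Lemma \ref{first monomial estimate} (with $\beta=0$) via the Leibniz rule, observing that the constant factors $a_l$ just pass through the derivatives. The only presentational difference is that the paper organizes the bookkeeping as an induction on $L$ with a two-factor Leibniz rule at each step, whereas you invoke the generalized $L$-fold Leibniz rule directly; these amount to the same thing, and your handling of the non-commutativity (order-preserving Leibniz, submultiplicativity of $\|\cdot\|_\infty$) is exactly the point one needs to note.
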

\begin{proof} We prove the assertion by induction on $L.$ For $L=1,$ the assertion follows from Lemma \ref{first monomial estimate} (applied with $\beta=0$). Suppose the assertion holds for $L.$ Let us prove it for $L+1.$

Let $g=h_1h_2,$ where
$$h_1(s,z)=\prod_{l=1}^La_l(x(s)+z)^{-1},\quad h_2(s,z)=a_{L+1}(x(s)+z)^{-1}.$$
By Leibniz rule, we have
$$\partial^{\alpha}\frac{d^N}{d^Nz}g=\sum_{\substack{\alpha_1+\alpha_2=\alpha\\ N_1+N_2=N}}c^{N,\alpha}_{N_1,\alpha_1}\partial^{\alpha_1}\frac{d^{N_1}}{d^{N_1}z}h_1\cdot \partial^{\alpha_2}\frac{d^{N_2}}{d^{N_2}z}h_2.$$
By triangle inequality, we have
$$\Big\|\partial^{\alpha}\frac{d^N}{d^Nz}g(s,z)\Big\|_{\infty}\leq\sum_{\substack{\alpha_1+\alpha_2=\alpha\\ N_1+N_2=N}}c^{N,\alpha}_{N_1,\alpha_1}\Big\|\partial^{\alpha_1}\frac{d^{N_1}}{d^{N_1}z}h_1(s,z)\Big\|_{\infty}\cdot\Big\|\partial^{\alpha_2}\frac{d^{N_2}}{d^{N_2}z}h_2(s,z)\Big\|_{\infty}.$$
By inductive assumption, we have
$$\Big\|\partial^{\alpha_1}\frac{d^{N_1}}{d^{N_1}z}h_1(s,z)\Big\|_{\infty}=O((|s|^2+|z|)^{-\frac12|\alpha_1|_1-L-N_1}).$$
By Lemma \ref{first monomial estimate}, we have
$$\Big\|\partial^{\alpha_2}\frac{d^{N_2}}{d^{N_2}z}h_2(s,z)\Big\|_{\infty}=O((|s|^2+|z|)^{-\frac12|\alpha_2|_1-1-N_2}).$$
A combination of these 3 estimates yields the assertion.
\end{proof}

\begin{lem}\label{third monomial estimate} Let $a_l\in L_{\infty}(\mathbb{T}^d_{\theta}),$ $1\leq l\leq L,$ and let
$$g(s,z)=(x(s)+z)^{-1}s^{\beta}\prod_{l=1}^La_l(x(s)+z)^{-1}.$$
For $\Re z>0$ and for every $s\in\mathbb{R}^d,$ we have
$$\Big\|\partial^{\alpha}\frac{d^N}{d^Nz}g(s,z)\Big\|_{\infty}=O((|s|^2+|z|)^{\frac12|\beta|_1-\frac12|\alpha|_1-L-N-1}).$$
\end{lem}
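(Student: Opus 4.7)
The plan is to follow the same Leibniz-rule factorization pattern used in Lemmas \ref{first monomial estimate} and \ref{second monomial estimate}. Specifically, I would write $g = h_1 h_2$ where
$$h_1(s,z) = (x(s)+z)^{-1} s^\beta, \qquad h_2(s,z) = \prod_{l=1}^L a_l (x(s)+z)^{-1},$$
so that Lemma \ref{first monomial estimate} controls $h_1$ and Lemma \ref{second monomial estimate} controls $h_2$.

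Applying the Leibniz rule jointly to the spatial multi-derivative $\partial^\alpha$ and the scalar $z$-derivative $\frac{d^N}{dz^N}$, I obtain
$$\partial^\alpha \frac{d^N}{dz^N} g = \sum_{\substack{\alpha_1+\alpha_2=\alpha \\ N_1+N_2=N}} c^{N,\alpha}_{N_1,\alpha_1}\, \partial^{\alpha_1} \frac{d^{N_1}}{dz^{N_1}} h_1 \cdot \partial^{\alpha_2} \frac{d^{N_2}}{dz^{N_2}} h_2.$$
By the triangle inequality in $L_\infty$, it suffices to estimate each summand. Lemma \ref{first monomial estimate} gives
$$\Bigl\|\partial^{\alpha_1} \frac{d^{N_1}}{dz^{N_1}} h_1(s,z)\Bigr\|_\infty = O\bigl((|s|^2+|z|)^{\frac12|\beta|_1 - \frac12|\alpha_1|_1 - N_1 - 1}\bigr),$$
and Lemma \ref{second monomial estimate} gives
$$\Bigl\|\partial^{\alpha_2} \frac{d^{N_2}}{dz^{N_2}} h_2(s,z)\Bigr\|_\infty = O\bigl((|s|^2+|z|)^{-\frac12|\alpha_2|_1 - L - N_2}\bigr).$$
Multiplying these two bounds and summing the exponents yields $\frac12|\beta|_1 - \frac12(|\alpha_1|_1+|\alpha_2|_1) - L - (N_1+N_2) - 1 = \frac12|\beta|_1 - \frac12|\alpha|_1 - L - N - 1$, which is exactly the exponent claimed. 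Since the sum is over a finite index set depending only on $\alpha$ and $N$, the implicit constant is finite.

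There is no real obstacle here: the statement is a direct combination of the two previous lemmas, and the proof is a purely mechanical application of the Leibniz rule together with the multiplicativity of the exponent bounds on $(|s|^2+|z|)$. The only thing to check carefully is bookkeeping of the exponents in $|s|^2+|z|$ and verification that the factor of $s^\beta$ sits in the right place so that Lemma \ref{first monomial estimate} (rather than a variant) applies verbatim; this is the case since $h_1$ has exactly the form treated in that lemma.
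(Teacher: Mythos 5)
Your proposal matches the paper's proof essentially verbatim: the same decomposition $g=h_1h_2$ with $h_1=(x(s)+z)^{-1}s^\beta$ and $h_2=\prod_{l=1}^L a_l(x(s)+z)^{-1}$, the same joint Leibniz expansion in $\partial^\alpha$ and $d^N/dz^N$, and the same invocations of Lemmas \ref{first monomial estimate} and \ref{second monomial estimate} followed by adding exponents. Correct, and no meaningful difference from the paper's argument.
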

\begin{proof} Let
$$h_1(s,z)=(x(s)+z)^{-1}s^{\beta},\quad h_2(s,z)=\prod_{l=1}^La_l(x(s)+z)^{-1}.$$
By Leibniz rule, we have
$$\partial^{\alpha}\frac{d^N}{d^Nz}g=\sum_{\substack{\alpha_1+\alpha_2=\alpha\\ N_1+N_2=N}}c^{N,\alpha}_{N_1,\alpha_1}\partial^{\alpha_1}\frac{d^{N_1}}{d^{N_1}z}h_1\cdot \partial^{\alpha_2}\frac{d^{N_2}}{d^{N_2}z}h_2.$$
By triangle inequality, we have
$$\Big\|\partial^{\alpha}\frac{d^N}{d^Nz}g(s,z)\Big\|_{\infty}\leq\sum_{\substack{\alpha_1+\alpha_2=\alpha\\ N_1+N_2=N}}c^{N,\alpha}_{N_1,\alpha_1}\Big\|\partial^{\alpha_1}\frac{d^{N_1}}{d^{N_1}z}h_1(s,z)\Big\|_{\infty}\cdot\Big\|\partial^{\alpha_2}\frac{d^{N_2}}{d^{N_2}z}h_2(s,z)\Big\|_{\infty}.$$
By Lemma \ref{first monomial estimate}, we have
$$\Big\|\partial^{\alpha_1}\frac{d^{N_1}}{d^{N_1}z}h_1(s,z)\Big\|_{\infty}=O((|s|^2+|z|)^{\frac12|\beta|_1-\frac12|\alpha_1|_1-N_1-1}).$$
By Lemma \ref{second monomial estimate}, we have
$$\Big\|\partial^{\alpha_2}\frac{d^{N_2}}{d^{N_2}z}h_2(s,z)\Big\|_{\infty}=O((|s|^2+|z|)^{-\frac12|\alpha_2|_1-L-N_2}).$$
A combination of these 3 estimates yields the assertion.
\end{proof}

\begin{lem}\label{goodk symbol estimate} For $\Re z>0$ and for every $s\in\mathbb{R}^d,$ we have
$$\Big\|\partial^{\alpha}\frac{d^N}{d^Nz}{\rm good}_k(s,z)\Big\|_2=O((|s|^2+|z|)^{-\frac{k}2-1-\frac12|\alpha|_1-N}).$$
\end{lem}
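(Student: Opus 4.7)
The plan is to expand $(x(s)+z)^{-1} x_m^{\mathscr{A}}(s,z)$, the basic building block of ${\rm good}_k(s,z)$, as a finite linear combination of monomials of the form handled by Lemma \ref{third monomial estimate}, and then apply that lemma term by term. The transition from the $L_\infty$-bound supplied by Lemma \ref{third monomial estimate} to the claimed $L_2$-bound is free: since $\tau(1)=1$, every $f\in L_\infty(\mathbb{T}^d_\theta)$ satisfies $\|f\|_2\leq\|f\|_\infty$.

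First I would prove by induction on $m$ that $x_m^{\mathscr{A}}(s,z)$ is a finite linear combination (with coefficients depending only on $g$ and its derivatives) of monomials of the form
\[
s^\beta c_0 (x(s)+z)^{-1} c_1 (x(s)+z)^{-1}\cdots (x(s)+z)^{-1} c_p,
\]
in which each $c_j$ is a fixed element of $C^\infty(\mathbb{T}^d_\theta)$ built from $\nu^{\pm\frac12}$, the entries of $g^{-1}$, and their partial derivatives. The base case $m=0$ is immediate from $x_0^{\mathscr{A}}=1$. For the inductive step, apply the outer operator ($V(s)$ if $m\in\mathscr{A}$, else $A_g$) composed with $(x(s)+z)^{-1}$ to a monomial for $x_{m-1}^{\mathscr{A}}$, then iterate the Leibniz rule together with
\[
D_j((x(s)+z)^{-1}) = -(x(s)+z)^{-1}\Big(\sum_{l,k} s_l s_k D_j((g^{-1})_{lk})\Big)(x(s)+z)^{-1}
\]
to propagate every $D_j$ to the right until it lands on a bounded smooth generator. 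Because each $s^\beta$ is a real scalar it commutes with everything, so after prepending the external $(x(s)+z)^{-1}$ from the definition of ${\rm good}_k$, every monomial can be brought to the form $(x(s)+z)^{-1} s^\beta a_1 (x(s)+z)^{-1}\cdots a_L (x(s)+z)^{-1}$ required by Lemma \ref{third monomial estimate} (any trailing algebra element is absorbed by an obvious multiplicative extension, since multiplication by a bounded smooth element and its $\partial^\alpha$-derivatives preserves the bound).

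Next comes the weight bookkeeping. By the scaling identity \eqref{xma homogeneity}, $x_m^{\mathscr{A}}(s,z)$ is homogeneous of degree $|\mathscr{A}|-2m$ under $s\mapsto rs$, $z\mapsto r^2 z$; comparing powers of $r$ across the expansion and noting that $s^\beta$ scales by $r^{|\beta|}$ while each resolvent scales by $r^{-2}$, every monomial of $x_m^{\mathscr{A}}$ has $|\beta|-2p = |\mathscr{A}|-2m$, where $p$ is the number of resolvent factors in that monomial. After the external resolvent is prepended, Lemma \ref{third monomial estimate} applies with $L=p$ and yields
\[
\Big\|\partial^\alpha\frac{d^N}{dz^N}[\text{monomial}]\Big\|_\infty = O\big((|s|^2+|z|)^{\frac{|\beta|}{2}-\frac{|\alpha|_1}{2}-(p+1)-N-1}\big).
\]
Substituting $|\beta|-2p=|\mathscr{A}|-2m$ together with $|\mathscr{A}|=2m-k$ (from Notation \ref{fifth resolvent nota}) collapses the exponent to $-\frac{k}{2}-1-\frac{|\alpha|_1}{2}-N$. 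Summing the finitely many monomial contributions in ${\rm good}_k$ and passing from $L_\infty$ to $L_2$ yields the claim.

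The main obstacle is the structural expansion of Step 1: one must verify that the Leibniz expansion of $V(s)$ and $A_g$ against long products of resolvents terminates in the described form with every $D_j$ eventually hitting a bounded smooth generator. Once this combinatorial organization is in place, the weight bookkeeping and the termwise invocation of Lemma \ref{third monomial estimate} are essentially dictated by the scaling \eqref{xma homogeneity}.
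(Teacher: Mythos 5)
Your proposal follows the same route as the paper: expand ${\rm good}_k$ by induction into a finite sum of monomials $(x(s)+z)^{-1}s^{\beta}\prod_{l=1}^{L}a_l(x(s)+z)^{-1}$ with $a_l\in C^\infty(\mathbb{T}^d_\theta)$ (your homogeneity bookkeeping via \eqref{xma homogeneity} is the argument behind the paper's stated relation $|\beta|_1=2L-k$), then apply Lemma~\ref{third monomial estimate} termwise and pass from $L_\infty$ to $L_2$ via $\tau(1)=1$. One small slip: if, as you say, Lemma~\ref{third monomial estimate} applies with $L=p$, the exponent it yields is $\tfrac12|\beta|_1-\tfrac12|\alpha|_1-p-N-1$, not $\tfrac12|\beta|_1-\tfrac12|\alpha|_1-(p+1)-N-1$ as displayed; with the corrected exponent and your relation $|\beta|_1=2p-k$ one indeed gets the claimed $-\tfrac{k}{2}-1-\tfrac12|\alpha|_1-N$, whereas the displayed formula would have given $-\tfrac{k}{2}-2-\tfrac12|\alpha|_1-N$.
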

\begin{proof} By induction, ${\rm good}_k(s,z)$ is a sum of finitely many terms of the shape
$$g(s,z)=(x(s)+z)^{-1}s^{\beta}\cdot \prod_{l=1}^La_l(x(s)+z)^{-1},$$
where $|\beta|_1=2L-k$ and $a_l\in C^{\infty}(\mathbb{T}^d_{\theta}),$ $1\leq l\leq L.$ The assertion follows from Lemma \ref{third monomial estimate}.
\end{proof}

\begin{lem}\label{main verification lemma} For $k\geq0$ and $p>0,$ we have ${\rm Good}_k\in W^{p,1}(\mathbb{R}^d,L_2(\mathbb{T}^d_{\theta})).$
\end{lem}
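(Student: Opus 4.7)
The plan is to establish pointwise estimates of the form
$$\|\partial_s^\alpha {\rm Good}_k(s)\|_2 \leq C_{\alpha,k,M}\,(1+|s|)^{-M}, \quad s\in\mathbb{R}^d,$$
for every multi-index $\alpha$ and every $M\geq 0$. Super-polynomial decay together with boundedness clearly implies $\partial_s^\alpha {\rm Good}_k\in L_1(\mathbb{R}^d,L_2)$ for every $\alpha$, hence ${\rm Good}_k\in W^{p,1}(\mathbb{R}^d,L_2)$ for every $p>0$.

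\textbf{Step 1 (setup).} Differentiating under the integral gives
$$\partial_s^\alpha {\rm Good}_k(s) = \frac{1}{2\pi}\int_{\mathbb{R}}\partial_s^\alpha {\rm good}_k(s,i\lambda)\,e^{i\lambda}\,d\lambda,$$
and Lemma \ref{goodk symbol estimate} supplies the bound
$$\Big\|\partial_s^\alpha \tfrac{d^N}{dz^N}{\rm good}_k(s,z)\Big\|_2 = O\bigl((|s|^2+|z|)^{-k/2-1-|\alpha|_1/2-N}\bigr)$$
for $\Re(z)>0$. I would first note that since $x(s)\geq c_0 |s|^2$ for some $c_0>0$ depending only on $g$, the function ${\rm good}_k(s,\cdot)$ is analytic on $\mathbb{C}\setminus(-\infty,-c_0|s|^2]$, and a line-by-line inspection of Lemmas \ref{first monomial estimate}--\ref{third monomial estimate} shows the estimate persists uniformly on the half-plane $\{\Re(z)\geq -c_0|s|^2/2\}$.

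\textbf{Step 2 (exponential decay for $|s|\geq 1$ by an $s$-dependent contour shift).} For $|s|\geq 1$, use Cauchy's theorem to move the contour from $\{i\mathbb{R}\}$ to $\{-c_0|s|^2/2+iv:v\in\mathbb{R}\}$. Legitimacy requires analyticity on the strip (checked in Step 1) plus sufficient decay as $|v|\to\infty$; if $k+|\alpha|_1$ is too small to give absolute convergence, first integrate by parts in $z$ (exchanging $e^{i\lambda}$ with $(-i\partial_\lambda)^N e^{i\lambda}$) to replace ${\rm good}_k$ by $(d^N/dz^N){\rm good}_k$, which decays as fast as desired. The shift extracts a factor $e^{-c_0|s|^2/2}$, and using $|{-c_0|s|^2/2+iv}|\gtrsim |s|^2+|v|$ together with the extended estimate yields
$$\|\partial_s^\alpha {\rm Good}_k(s)\|_2 \leq Ce^{-c_0|s|^2/2}\int_{\mathbb{R}}(|s|^2+|v|)^{-k/2-1-|\alpha|_1/2}\,dv = O\bigl(|s|^{-k-|\alpha|_1}e^{-c_0|s|^2/2}\bigr).$$

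\textbf{Step 3 (boundedness near $s=0$).} Exploit the homogeneity (\ref{goodk homogeneity}) to write, for $s\neq 0$,
$${\rm Good}_k(s) = |s|^{-k}\Phi_k(|s|^2,\hat s), \qquad \hat s=s/|s|,$$
where $\Phi_k(t,\hat s)=(2\pi)^{-1}\int_{\mathbb{R}}{\rm good}_k(\hat s,i\mu)e^{it\mu}d\mu$. For $|\hat s|=1$ the integrand is $O(|\mu|^{-k/2-1})$ and analytic in the right half-plane. Closing the contour to the right and applying Cauchy's theorem shows
$$\tfrac{d^j}{dt^j}\Phi_k(t,\hat s)\big|_{t=0}=0 \quad\text{for all }j<k/2,$$
so $\Phi_k(t,\hat s)=O(t^{\lceil k/2\rceil})$ as $t\to 0^+$, uniformly in $\hat s$. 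Consequently ${\rm Good}_k(s)=O(1)$ on any bounded neighbourhood of $0$. The same argument applied after differentiation in $s$ (using the chain rule and the analogous homogeneity of $\partial_s^\alpha{\rm good}_k$) gives $\partial_s^\alpha {\rm Good}_k(s)=O(1)$ on $\{|s|\leq 1\}$. Alternatively, one can deduce boundedness from Lemma \ref{complex shift lemma}, i.e., from the representation on the contour $1+i\mathbb{R}$, which is valid for all $s$.

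\textbf{Step 4 (conclusion).} Combining Steps 2 and 3, $\partial_s^\alpha {\rm Good}_k$ is bounded on $\mathbb{R}^d$ and decays faster than any inverse power of $|s|$ at infinity. In particular $\partial_s^\alpha {\rm Good}_k \in L_1(\mathbb{R}^d,L_2(\mathbb{T}^d_\theta))$ for every $\alpha$, which is the required membership ${\rm Good}_k \in W^{p,1}(\mathbb{R}^d,L_2(\mathbb{T}^d_\theta))$ for every $p>0$.

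\textbf{Main obstacle.} The delicate point is Step 2: the contour shift depth $c_0|s|^2/2$ depends on $s$, so one must check that the estimate of Lemma \ref{goodk symbol estimate} genuinely extends to the whole strip $\{\Re(z)\geq-c_0|s|^2/2\}$ (not just $\Re(z)>0$) with a constant independent of $s$, and that enough integrations by parts in $z$ are performed to make the shifted integral absolutely convergent in the low-smoothness case $k+|\alpha|_1$ small. A secondary subtlety is the order-of-vanishing argument for $\Phi_k(t,\hat s)$ at $t=0$, which must be made uniform in $\hat s\in S^{d-1}$.
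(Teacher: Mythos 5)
Your proposal shares the key ingredients with the paper's argument — Lemma \ref{goodk symbol estimate}, a contour shift, and integration by parts in $z$ — but it takes a substantially more elaborate route than necessary. The paper simply applies Lemma \ref{complex shift lemma} to represent ${\rm Good}_k$ on the \emph{fixed} contour $1+i\mathbb{R}$, integrates by parts $N$ times in $z$, differentiates under the integral sign (justified by the absolute convergence furnished by Lemma \ref{goodk symbol estimate}), and concludes
$$\Big\|\partial^{\alpha}{\rm Good}_k(s)\Big\|_2\leq c'_{N,\alpha,g}\,(|s|^2+1)^{-\frac{k}{2}-\frac12|\alpha|_1-N}.$$
Since $N$ is arbitrary, this is polynomial decay of every order, which already gives $\partial^{\alpha}{\rm Good}_k\in L_1(\mathbb{R}^d,L_2)$ for every $\alpha$, and hence the claim. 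No $s$-dependent contour, no extension of the symbol estimate into the left half-plane, no homogeneity or Taylor-vanishing argument near $s=0$: the fixed shift to $\Re z=1$ handles the origin, and the $N$-fold integration by parts handles the decay at infinity. Your own Step~3 ``alternative'' (deduce boundedness from Lemma \ref{complex shift lemma}) is essentially the entirety of the paper's proof, applied globally rather than just on $\{|s|\leq 1\}$.

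Your more ambitious route — the $s$-dependent shift to $\Re z = -c_0|s|^2/2$ — would yield Gaussian rather than polynomial decay, a genuinely stronger estimate, but it is not needed here, and the obstacles you flag are real work: you would have to rerun the chain Lemma \ref{first sobolev lemma} through Lemma \ref{third monomial estimate} with a resolvent bound for $(x(s)+z)^{-1}$ that is uniform on $\{\Re z\geq -c_0|s|^2/2\}$ (relying on the quantitative spectral gap $\sigma(x(s))\subset[c_0|s|^2,\infty)$, so that $\mathrm{dist}(z,\sigma(x(s)))\gtrsim|s|^2+|z|$ there), and make the vanishing-order argument for $\Phi_k(t,\hat s)$ at $t=0$ uniform in $\hat s$ and careful about the remainder when $k$ is odd. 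None of these is fatal, but you have overlooked that the fixed shift combined with arbitrary-order integration by parts already delivers all the decay the lemma requires, at a fraction of the effort.
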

\begin{proof} Using Lemma \ref{complex shift lemma} and integration by parts, we obtain
$${\rm Good}_k(s)=\frac{(-1)^N}{2\pi i}\int_{1+i\mathbb{R}}\frac{d^N}{d^Nz}{\rm good}_k(s,z)e^zdz.$$
Heuristically, we have
$$\partial^{\alpha}{\rm Good}_k(s)=\frac{(-1)^N}{2\pi i}\int_{1+i\mathbb{R}}\partial^{\alpha}\frac{d^N}{d^Nz}{\rm good}_k(s,z)e^zdz.$$
This formula is indeed true because the integral in the right hand side converges absolutely by Lemma \ref{goodk symbol estimate}. Moreover, we have
$$\Big\|\partial^{\alpha}{\rm Good}_k(s)\Big\|_2\leq\frac{e}{2\pi}\int_{1+i\mathbb{R}}\Big\|\partial^{\alpha}\frac{d^N}{d^Nz}{\rm good}_k(s,z)\Big\|_2|dz|\leq$$
$$\leq c_{N,\alpha,g}\int_{\mathbb{R}}(|s|^2+1+|\lambda|)^{-\frac{k}2-1-\frac12|\alpha|_1-N}d\lambda\leq$$
$$\leq c'_{N,\alpha,g}(|s|^2+1)^{-\frac{k}2-\frac12|\alpha|_1-N}.$$
In particular, $\partial^{\alpha}{\rm Good}_k\in L_1(\mathbb{R}^d,L_2(\mathbb{T}^d_{\theta}))$ for every $\alpha\in\mathbb{Z}^d_+.$
\end{proof}

\begin{cor}\label{series convergence corollary} For every $k\geq0,$ the series
$$\sum_{n\in\mathbb{Z}^d}{\rm Good}_k(nt^{\frac12})$$
converges in $L_2(\mathbb{T}^d_{\theta}).$ We have
$$\sum_{n\in\mathbb{Z}^d}{\rm Good}_k(nt^{\frac12})=t^{-\frac{d}{2}}\cdot\int_{\mathbb{R}^d}{\rm Good}_k(s)ds+O(t^{\infty}).$$
\end{cor}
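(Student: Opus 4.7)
The corollary is essentially a direct application of the two tools just established, namely Theorem \ref{sobolev connes lemma} (the vector-valued Connes "little lemma") and Lemma \ref{main verification lemma} (regularity of ${\rm Good}_k$). My plan is to apply the former to the function $f={\rm Good}_k$ after a cosmetic rescaling of the parameter from $t$ to $t^{1/2}$.

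First, by Lemma \ref{main verification lemma}, for \emph{every} $p>0$ the function ${\rm Good}_k$ belongs to the Banach-space-valued Sobolev space $W^{p,1}(\mathbb{R}^d,L_2(\mathbb{T}^d_\theta))$. In particular it satisfies the hypothesis of Theorem \ref{sobolev connes lemma} with arbitrarily large Sobolev exponent $p>d$. Convergence of the series $\sum_{n\in\mathbb{Z}^d}{\rm Good}_k(nt^{1/2})$ in $L_2(\mathbb{T}^d_\theta)$ is then guaranteed by the embedding $W^{p,1}(\mathbb{R}^d,L_2(\mathbb{T}^d_\theta))\subset l_1(L_\infty)(\mathbb{R}^d,L_2(\mathbb{T}^d_\theta))$ of Lemma \ref{poisson third lemma} applied to the rescaled function $s\mapsto{\rm Good}_k(st^{1/2})$, which lies in the same Sobolev space.

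Next, I would apply Theorem \ref{sobolev connes lemma} with $f={\rm Good}_k$ and with the rescaling parameter chosen to be $t^{1/2}$ in place of $t$. This directly yields
$$\Big\|\sum_{n\in\mathbb{Z}^d}{\rm Good}_k(nt^{1/2})-t^{-d/2}\int_{\mathbb{R}^d}{\rm Good}_k(s)\,ds\Big\|_2=O\bigl(t^{(p-d)/2}\bigr),\quad t\downarrow 0.$$
Since Lemma \ref{main verification lemma} permits one to take $p$ arbitrarily large, the exponent $(p-d)/2$ can be made larger than any prescribed positive number, which is exactly the content of the $O(t^\infty)$ remainder. This establishes the asymptotic claim of the corollary.

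The only point that requires any thought is making sure the ``rescaling'' step is legal: one must check that $\sigma_{t^{-1/2}}{\rm Good}_k$ still lies in $W^{p,1}$, but this is immediate from the scaling behaviour of the Sobolev norm under dilations (at worst one loses a factor of a power of $t^{-1/2}$, which is harmless since the final error estimate for any fixed $p$ is then absorbed into the error estimate coming from a larger $p'$). I do not anticipate any real obstacle, as all the nontrivial analytical work has already been done in Lemma \ref{main verification lemma} (where the Sobolev regularity of ${\rm Good}_k$ was derived from the pointwise bounds on $\partial^\alpha (d^N/dz^N){\rm good}_k$) and in the Poisson summation machinery of Section \ref{poisson section}.
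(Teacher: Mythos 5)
Your argument is correct and is essentially the paper's own one-line proof: combine Lemma \ref{main verification lemma} (which gives ${\rm Good}_k\in W^{p,1}(\mathbb{R}^d,L_2(\mathbb{T}^d_\theta))$ for all $p>0$) with Theorem \ref{sobolev connes lemma}, substituting $t^{1/2}$ for the dilation parameter to get a remainder $O(t^{(p-d)/2})$ for every $p>d$, hence $O(t^\infty)$. The paragraph you add about checking that $\sigma_{t^{-1/2}}{\rm Good}_k$ stays in $W^{p,1}$ is unnecessary: Theorem \ref{sobolev connes lemma} is already stated as a $t$-uniform bound on $\sum_n f(tn)-t^{-d}\int f$, so you just evaluate that statement at the value $t^{1/2}$ of its parameter rather than rescaling the function itself.
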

\begin{proof} The assertion follows immediately from Lemma \ref{main verification lemma} and Theorem \ref{sobolev connes lemma}.
\end{proof}

\begin{proof}[Proof of Theorem \ref{finite expansion thm}] Let
$$F(t)=\sum_{n\in\mathbb{Z}^d}(\lambda_r(e_n)^*e^{-tA_g}\lambda_r(e_n))(1),$$
where the series converges weakly in $L_2(\mathbb{T}^d_{\theta})$ by Lemma \ref{first strategy lemma}. By Theorem \ref{exponent splitting theorem}, we have
$$\Big\|F(t)-\sum_{n\in\mathbb{Z}^d}\Big(\sum_{k=0}^{2d}t^{\frac{k}{2}}{\rm Good}_k(nt^{\frac12})\Big)\Big\|_2=O(1).$$
By Corollary \ref{series convergence corollary}, we have
$$\Big\|F(t)-\sum_{k=0}^{2d}t^{\frac{k-d}{2}}\int_{\mathbb{R}^d}{\rm Good}_k(s)ds\Big\|_2=O(1).$$

Obviously, the terms with $k\geq d$ are bounded. Since ${\rm Good}_k$ is an odd function when $k$ is odd, it follows that respective summand is $0.$ Recall that
$$I_k=\int_{\mathbb{R}^d}{\rm Good}_k(s)ds,\quad 0\leq k\leq d.$$
We now have
$$\Big\|F(t)-\sum_{\substack{0\leq k<d\\ k=0{\rm mod}2}}t^{\frac{k-d}{2}}I_k\Big\|_2=O(1).$$
The assertion follows now from Lemma \ref{first strategy lemma}.
\end{proof}

\section{Proof of the main result}

Take $d'>d$ and consider $d'\times d'$ matrix $\theta'$ whose left upper corner is $\theta.$ For simplicity,  it makes sense to set $\theta'_{kl}=0$ when $k>d$ or when $l>d.$ We have $L_{\infty}(\mathbb{T}^{d'}_{\theta'})=L_{\infty}(\mathbb{T}^d_{\theta})\bar{\otimes}L_{\infty}(\mathbb{T}^{d'-d})$ (see Example \ref{tensor example}). 

Define a metric $g'$ (size of $g'$ is $d'$) whose left upper corner is $g.$ We ask that $g_{kl}=\delta_{k,l}$ when either $k>d$ or $l>d.$ 

\begin{lem}\label{nu ext lemma} Let $\nu'$ be a version of $\nu$ constructed from the metric tensor $g'.$ We have
$$\nu'=\nu\otimes 1.$$
\end{lem}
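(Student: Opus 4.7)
The plan is to unfold the definition of $\nu'$ from \eqref{nu def} and exploit the block structure of $g'$. Since by construction $g'_{kl}=\delta_{kl}$ whenever $k>d$ or $l>d$, the inverse $(g')^{-1}$ has the same block form: its upper-left $d\times d$ block is $g^{-1}$ (with entries sitting in $C^{\infty}(\mathbb{T}^d_{\theta})\otimes 1\subset C^{\infty}(\mathbb{T}^{d'}_{\theta'})$), its lower-right $(d'-d)\times(d'-d)$ block is the identity matrix, and the off-diagonal blocks vanish. This is a purely algebraic statement about invertible block-diagonal matrices over an algebra, so no non-commutativity issue arises.

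Consequently the quadratic form splits as
$$\sum_{i,j=1}^{d'}t_it_j((g')^{-1})_{ij}=\sum_{i,j=1}^{d}t_it_j(g^{-1})_{ij}\otimes 1\;+\;\sum_{k=d+1}^{d'}t_k^2\cdot 1\otimes 1.$$
The two summands commute (the second one is a scalar), so the exponential factorises:
$$e^{-\sum_{i,j=1}^{d'}t_it_j((g')^{-1})_{ij}}=e^{-\sum_{i,j=1}^{d}t_it_j(g^{-1})_{ij}}\otimes e^{-\sum_{k=d+1}^{d'}t_k^2}.$$
Here the left factor is viewed as an element of $C^{\infty}(\mathbb{T}^d_{\theta})$ and the right factor is a scalar.

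Next I would apply Fubini (the integrand is Bochner integrable in $C^m(\mathbb{T}^{d'}_{\theta'})$ for every $m$, by the same argument used to justify \eqref{nu def}) to write
$$\nu'=\pi^{-d'/2}\Bigl(\int_{\mathbb{R}^d}e^{-\sum_{i,j=1}^{d}t_it_j(g^{-1})_{ij}}\,dt_1\cdots dt_d\Bigr)\otimes\Bigl(\int_{\mathbb{R}^{d'-d}}e^{-\sum_{k=d+1}^{d'}t_k^2}\,dt_{d+1}\cdots dt_{d'}\Bigr).$$
The second factor is a standard Gaussian integral equal to $\pi^{(d'-d)/2}$, and the first factor is $\pi^{d/2}\nu$ by \eqref{nu def}. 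The prefactors combine as $\pi^{-d'/2}\cdot\pi^{d/2}\cdot\pi^{(d'-d)/2}=1$, leaving $\nu'=\nu\otimes 1$ under the identification $L_{\infty}(\mathbb{T}^{d'}_{\theta'})=L_{\infty}(\mathbb{T}^{d}_{\theta})\bar{\otimes}L_{\infty}(\mathbb{T}^{d'-d})$ of Example \ref{tensor example}.

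No step looks difficult; the only thing to be slightly careful about is the justification of Fubini in the Bochner sense and the identification of the first tensor factor (an element of $C^{\infty}(\mathbb{T}^d_{\theta})\otimes 1$) with $\nu$ itself under the isomorphism of Example \ref{tensor example}. Both are routine given the smoothness and rapid decay of the Gaussian integrand in every $C^m$-norm.
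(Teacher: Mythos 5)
Your proposal is correct and follows essentially the same route as the paper: exploit the block-diagonal structure of $(g')^{-1}$ to factor the Gaussian exponential into a $C^{\infty}(\mathbb{T}^d_{\theta})$-valued part and a scalar part, then split the Bochner integral over $\mathbb{R}^{d'}=\mathbb{R}^d\times\mathbb{R}^{d'-d}$ and evaluate the scalar Gaussian. You are in fact slightly more careful than the paper, whose first displayed line omits the scalar factor $e^{-\sum_{k>d}t_k^2}$ (it reappears correctly in the next line), and you spell out the Fubini step, but the argument is the same.
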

\begin{proof} We have
$$e^{-\sum_{i,j=1}^{d'}t_it_j((g')^{-1})_{ij}}=e^{-\sum_{i,j=1}^dt_it_j(g^{-1})_{ij}}\otimes 1$$
$$\nu'=\pi^{-\frac{d}{2}}\int_{\mathbb{R}^d}e^{-\sum_{i,j=1}^dt_it_j(g^{-1})_{ij}}dt\otimes \pi^{-\frac{d'-d}{2}}\int_{\mathbb{R}^{d'-d}}e^{-\sum_{i=d+1}^{d'}t_i^2}dt=\nu\otimes 1.$$
\end{proof}

\begin{lem}\label{corrk ext lemma} Let ${\rm Corr}_k'$ be a version of ${\rm Corr}_k$ constructed from the metric tensor $g'.$ We have
$${\rm Corr}_k'(s)={\rm Corr}_k(u)\otimes e^{-|v|^2},\quad u=(s_1,\cdots,s_d),\quad v=(s_{d+1},\cdots,s_{d'}).$$
\end{lem}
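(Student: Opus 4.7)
The plan is to exploit the tensor-product structure $L_\infty(\mathbb{T}^{d'}_{\theta'}) = L_\infty(\mathbb{T}^d_\theta)\bar\otimes L_\infty(\mathbb{T}^{d'-d})$ from Example \ref{tensor example} and verify that every ingredient in the definition of ${\rm Corr}'_k$ splits cleanly along it. Since $g'$ is block-diagonal with upper-left block $g$ and lower-right block $I_{d'-d}$, so is $(g')^{-1}$. Combined with Lemma \ref{nu ext lemma} and the fact that the derivations $D_i$ with $i>d$ act only on the second tensor factor and commute with $\lambda_l(\nu')$, a direct calculation yields
$$x'(s) = x(u)\otimes 1+|v|^2\cdot(1\otimes 1), \qquad A_{g'} = A_g\otimes 1+1\otimes\sum_{i>d}D_i^2,$$
$$V'(s) = V(u)\otimes 1+1\otimes\Big(2\sum_{i>d}v_i D_i\Big).$$
Setting $w:=z+|v|^2$, this gives $(x'(s)+z)^{-1}=(x(u)+w)^{-1}\otimes 1$.

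I would then show by induction on $m$ that $(x'_m)^{\mathscr{A}}(s,z)=(x_m)^{\mathscr{A}}(u,w)\otimes 1$ for every $\mathscr{A}\subset\{1,\ldots,m\}$. The base case $m=0$ is tautological. For the inductive step, the hypothesis puts the product $(x'(s)+z)^{-1}(x'_{m-1})^{\mathscr{A}}(s,z)$ inside $C^\infty(\mathbb{T}^d_\theta)\otimes 1$, i.e.\ constant in the second variable. The extra summands $1\otimes 2\sum_{i>d}v_i D_i$ and $1\otimes\sum_{i>d}D_i^2$ consist solely of derivatives $D_i$ with $i>d$, which annihilate every such element. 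Therefore applying $V'(s)$ (resp.\ $A_{g'}$) reduces to applying $V(u)\otimes 1$ (resp.\ $A_g\otimes 1$), and the recursion matches that defining $(x_m)^{\mathscr{A}}(u,w)$. Summing over $m$ and $\mathscr{A}$ one obtains ${\rm corr}'_k(s,z) = {\rm corr}_k(u,z+|v|^2)\otimes 1$.

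The final step is to pass to ${\rm Corr}'_k$ via its defining contour integral. Substituting the preceding identity and making the change of variables $\mu = i\lambda+|v|^2$ gives
$${\rm Corr}'_k(s) = \frac{e^{-|v|^2}}{2\pi i}\int_{|v|^2+i\mathbb{R}}{\rm corr}_k(u,\mu)e^\mu\,d\mu\otimes 1.$$
The integrand is analytic in $\mu$ throughout the strip $\{0\leq\Re\mu\leq|v|^2\}$, and the symbol estimates of Lemma \ref{goodk symbol estimate} force the horizontal contributions at $\pm iN$ to tend to zero as $N\to\infty$. Cauchy's theorem then equates the integral on $|v|^2+i\mathbb{R}$ with the one on $i\mathbb{R}$, which equals $2\pi i\,{\rm Corr}_k(u)$ (interpreted as a principal value when $k=0$). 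Relocating the scalar $e^{-|v|^2}$ into the second tensor factor yields the claim. The main obstacle is justifying this contour shift, especially the $k=0$ principal-value case, but both are controlled by the decay estimates already established in Section \ref{verification section}.
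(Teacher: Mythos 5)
Your argument is correct and follows the same route as the paper's proof: establish the algebraic identity ${\rm corr}_k'(s,z)={\rm corr}_k(u,z+|v|^2)\otimes 1$, convert the defining integral for ${\rm Corr}_k'$ into an integral over the vertical line $|v|^2+i\mathbb{R}$, and shift the contour to $i\mathbb{R}$ via analyticity of ${\rm corr}_k(u,\cdot)$ and vanishing of the horizontal segments. The only difference is that you supply the inductive verification that each building block $x_m^{\mathscr{A}}$ factors as $(x_m)^{\mathscr{A}}(u,z+|v|^2)\otimes 1$ (exploiting that the extra derivations $D_i$, $i>d$, annihilate elements of the form $c\otimes 1$), whereas the paper simply asserts the resulting identity; this is a useful clarification but not a structural departure.
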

\begin{proof} Let ${\rm corr}_k'$ be a version of ${\rm corr}_k$ constructed from the metric tensor $g'.$ 

We have
$${\rm corr}_k'(s,i\lambda)={\rm corr}_k(u,|v|^2+i\lambda)\otimes 1,\quad u=(s_1,\cdots,s_d),\quad v=(s_{d+1},\cdots,s_{d'}).$$
The crucial fact is that, for $u\neq0,$ the mapping $z\to {\rm corr}_k(u,z)$ is holomorphic in the half-plane $\{\Re(z)>-\epsilon\},$ where $\epsilon$ depends on $u.$ Therefore, we have
\begin{equation}\label{corrk' eq}
\frac1{2\pi}\int_{\mathbb{R}}{\rm corr}_k'(s,i\lambda)e^{i\lambda}d\lambda=e^{-|v|^2}\cdot\frac1{2\pi i}\int_{|v|^2+i\mathbb{R}}{\rm corr}_k(u,z)e^zdz.
\end{equation}

We claim that
$$\int_{|v|^2+i\mathbb{R}}{\rm corr}_k(u,z)e^zdz=\int_{i\mathbb{R}}{\rm corr}_k(u,z)e^zdz.$$
Indeed, we have
$$\int_{|v|^2+i\mathbb{R}}{\rm corr}_k(u,z)e^zdz=\lim_{N\to\infty}\int_{|v|^2-iN}^{|v|^2+iN}{\rm corr}_k(u,z)e^zdz.$$
We now write
$$\int_{|v|^2-iN}^{|v|^2+iN}{\rm corr}_k(u,z)e^zdz=\int_{|v|^2-iN}^{-iN}{\rm corr}_k(u,z)e^zdz+$$
$$+\int_{-iN}^{iN}{\rm corr}_k(u,z)e^zdz+\int_{iN}^{|v|^2+iN}{\rm corr}_k(u,z)e^zdz.$$
Obviously,
$$\Big\|\int_{|v|^2-iN}^{-iN}{\rm corr}_k(u,z)e^zdz\Big\|_{\infty}\leq|v|^2e^{|v|^2}\cdot\sup_{t\in(0,|v|^2)}\|{\rm corr}_k(u,t-iN)\|_{\infty}.$$
Thus,
$$\int_{|v|^2-iN}^{-iN}{\rm corr}_k(u,z)e^zdz=o(1),\quad N\to\infty.$$
Similarly,
$$\int_{iN}^{|v|^2+iN}{\rm corr}_k(u,z)e^zdz=o(1),\quad N\to\infty.$$
This proves the claim.

The assertion follows from the above claim and \eqref{corrk' eq}.
\end{proof}

\begin{lem}\label{ik ext lemma} Let $I_k'$ be a version of $I_k$ constructed from the metric tensor $g'.$ We have
$$I_k'=\pi^{\frac{d'-d}{2}}I_k\otimes 1.$$
\end{lem}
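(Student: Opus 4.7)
My plan is to simply combine the previous lemma with Fubini's theorem for Bochner integrals, so this should be a very short proof. First I would unfold the definition: by Notation \ref{ik notation},
$$I_k' = \int_{\mathbb{R}^{d'}} {\rm Corr}_k'(s)\,ds,$$
where the integrand is the object constructed from the enlarged metric $g'$ on $\mathbb{T}^{d'}_{\theta'}$. The bulk of the work has already been done in Lemma \ref{corrk ext lemma}, which gives the tensor factorization
$${\rm Corr}_k'(s) = {\rm Corr}_k(u) \otimes e^{-|v|^2}, \quad u=(s_1,\dots,s_d),\ v=(s_{d+1},\dots,s_{d'}).$$

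Next I would split the integral over $\mathbb{R}^{d'} = \mathbb{R}^d \times \mathbb{R}^{d'-d}$ by Fubini's theorem in the Bochner sense. Integrability in $u$ follows from Lemma \ref{main verification lemma} applied to the metric $g$ (which supplies ${\rm Good}_k = {\rm Corr}_k$ in $W^{p,1}(\mathbb{R}^d, L_2(\mathbb{T}^d_\theta))$ for every $p$, hence in particular in $L_1$); integrability in $v$ is just the elementary Gaussian bound on $e^{-|v|^2}$. Therefore
$$I_k' = \left(\int_{\mathbb{R}^d} {\rm Corr}_k(u)\,du\right) \otimes \left(\int_{\mathbb{R}^{d'-d}} e^{-|v|^2}\,dv\right).$$

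Finally I would evaluate the two factors: the first is precisely $I_k$ by Notation \ref{ik notation}, and the second is the classical Gaussian integral $\pi^{(d'-d)/2}$. Combining, $I_k' = \pi^{(d'-d)/2} I_k \otimes 1$, as claimed. The only conceivable obstacle is the justification of Fubini in the vector-valued setting, but the uniform-in-$v$ decay of ${\rm Corr}_k(u) \otimes e^{-|v|^2}$ in the $L_2(\mathbb{T}^{d'}_{\theta'})$-norm (via Lemma \ref{main verification lemma}) makes the integrand Bochner integrable on $\mathbb{R}^{d'}$, so the factorization is immediate.
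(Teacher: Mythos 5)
Your proof is correct and follows the same route as the paper: the paper's own proof simply records the Gaussian integral $\int_{\mathbb{R}^{d'-d}}e^{-|v|^2}dv=\pi^{(d'-d)/2}$ and says the assertion follows from Lemma~\ref{corrk ext lemma}, leaving the Fubini step and integrability implicit, which you have spelled out. One small imprecision in your parenthetical: the identity ${\rm Good}_k={\rm Corr}_k$ only holds for $k\le d$ (compare the ranges of $m$ in Notation~\ref{fifth resolvent nota}), and since $d'>d$ one does need $I_k$ for $k$ beyond $d$ here. This does not break the argument: $\mathrm{corr}_k$ is a finite sum of terms of exactly the same shape as those appearing in $\mathrm{good}_k$, so the estimates of Lemma~\ref{goodk symbol estimate} and the conclusion of Lemma~\ref{main verification lemma} carry over verbatim to ${\rm Corr}_k$, giving the Bochner integrability in $u$ that you need. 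With that small correction, your justification of the vector-valued Fubini step (via $\|{\rm Corr}_k(u)\otimes e^{-|v|^2}\|_{L_2(\mathbb{T}^{d'}_{\theta'})}=\|{\rm Corr}_k(u)\|_{L_2(\mathbb{T}^d_{\theta})}\,e^{-|v|^2}$) is sound.
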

\begin{proof} Obviously,
$$\int_{\mathbb{R}^{d'-d}}e^{-|v|^2}dv=\pi^{\frac{d'-d}{2}}.$$
The assertion follows now from Lemma \ref{corrk ext lemma}.
\end{proof}

\begin{proof}[Proof of Theorem \ref{main result}] By Lemma \ref{nu ext lemma}, $\lambda_l(\nu')$ commutes with $D_k,$ $k>d.$ Therefore, we have
$$A_{g'}=\lambda_l((\nu')^{-\frac12})\sum_{i,j=1}^dD_i\lambda_l((\nu')^{\frac12}(g^{-1})_{ij}(\nu')^{\frac12})D_j\lambda_l((\nu')^{-\frac12})+\sum_{i=d+1}^{d'}D_i^2.$$
By Lemma \ref{nu ext lemma}, the first summand is exactly $A_g\otimes 1.$ The second summand is, clearly, $1\otimes \Delta.$ Consequently, we have
$$A_{g'}=A_g\otimes 1+1\otimes \Delta.$$
This implies
$$e^{-tA_{g'}}=e^{-tA_g\otimes 1-1\otimes t\Delta}=e^{-tA_g}\otimes e^{-t\Delta}.$$

Also, if $x\in L_{\infty}(\mathbb{T}^d_{\theta}),$ then $x\otimes 1\in L_{\infty}(\mathbb{T}^{d'}_{\theta'}).$ We have
$$\lambda_l(x\otimes 1)e^{-tA_{g'}}=\lambda_l(x)e^{-tA_g}\otimes e^{-t\Delta}.$$
Therefore,
$${\rm Tr}(\lambda_l(x\otimes 1)e^{-tA_{g'}})={\rm Tr}(\lambda_l(x)e^{-tA_g})\cdot{\rm Tr}(e^{-t\Delta}).$$

It follows from the Poisson summation formula that
$${\rm Tr}(e^{-t\Delta})=(\frac{\pi}{t})^{\frac{d'-d}{2}}\cdot\Big(1+O(t^{\infty})\Big).$$
By Theorem \ref{finite expansion thm}, we have 
$${\rm Tr}(\lambda_l(w)e^{-tA_{g'}})=t^{-\frac{d'}{2}}\sum_{\substack{0\leq k<d'\\ k=0{\rm mod}2}}t^{\frac{k}{2}}\tau(w I_k')+O(1),\quad t\downarrow 0,$$
for every $w\in L_{\infty}(\mathbb{T}^{d'}_{\theta'}).$ Setting $w=x\otimes 1,$ we infer from Lemma \ref{ik ext lemma} that
$${\rm Tr}(\lambda_l(x)e^{-tA_g})\cdot (\frac{\pi}{t})^{\frac{d'-d}{2}}\cdot\Big(1+O(t^{\infty})\Big)=\pi^{\frac{d'-d}{2}}t^{-\frac{d'}{2}}\sum_{\substack{0\leq k<d'\\ k=0{\rm mod}2}}t^{\frac{k}{2}}\tau(xI_k)+O(1).$$
It follows immediately that
$${\rm Tr}(\lambda_l(x)e^{-tA_g})=t^{-\frac{d}{2}}\sum_{\substack{0\leq k<d'\\ k=0{\rm mod}2}}t^{\frac{k}{2}}\tau(xI_k)+O(t^{\frac{d'-d}{2}}).$$

Taking as large $d'$ as needed, we obtain an asymptotic expansion.

Finally, we have
$$e^{-t\Delta_g}=\lambda_l(\nu^{-\frac12})e^{-tA_g}\lambda_l(\nu^{\frac12}).$$
Thus,
$${\rm Tr}(\lambda_l(x)e^{-t\Delta_g})={\rm Tr}(\lambda_l(x)\cdot \lambda_l(\nu^{-\frac12})e^{-tA_g}\lambda_l(\nu^{\frac12})={\rm Tr}(\lambda_l(\nu^{\frac12}x\nu^{-\frac12})e^{-tA_g}).$$
By the already proved asymptotic expansion, we have
$${\rm Tr}(\lambda_l(x)e^{-t\Delta_g})\sim t^{-\frac{d}{2}}\sum_{k=0{\rm mod}2}t^{\frac{k}{2}}\tau(\nu^{\frac12}x\nu^{-\frac12}\cdot I_k)=t^{-\frac{d}{2}}\sum_{k=0{\rm mod}2}t^{\frac{k}{2}}\tau(x\cdot \nu^{-\frac12}I_k\nu^{\frac12}).$$
\end{proof}

\end{document}